\documentclass[10pt]{amsart}

\usepackage[colorlinks]{hyperref}
\usepackage{color,graphicx,shortvrb}
\usepackage[latin 1]{inputenc}

\usepackage[active]{srcltx} %SRC Specials for DVI search

\usepackage{enumerate}
\usepackage{amssymb}

\newtheorem{theorem}{Theorem}[section]

\newtheorem{corollary}[theorem]{Corollary}

\newtheorem{lemma}[theorem]{Lemma}

\newtheorem{proposition}[theorem]{Proposition}
\newtheorem{remark}[theorem]{Remark}

\def\J#1#2#3{ \left\{ #1,#2,#3 \right\} }

\def\NN{{\mathbb{N}}}
\def\11{\textbf{$1$}}

\DeclareGraphicsExtensions{.jpg,.pdf,.png,.eps}

%%usepackage{showkeys}
%%%%%%%%%%%%%%%%%%%%%%%%%%%%%%%%%%%%%%%%%%%%%%%%%%%%%%%%%%%%%%%%%%%%%%%%%%%%%%%%%%%%%%%%%%%%%%%%%%%

%\usepackage[active]{srcltx} %SRC Specials for DVI search
%%%%%%%%%%%%%%%%%%%%%%%%%%%%%%%%%%%%%%%%%%%%%%%%%%%%%%%%%%%%%%%%%%%%%%%%%%%%%

\begin{document}

\title{Low rank compact operators and Tingley's problem}

\author[F.J. Fern\'{a}ndez-Polo]{Francisco J. Fern\'{a}ndez-Polo}
\author[A.M. Peralta]{Antonio M. Peralta}

\address{Departamento de An{\'a}lisis Matem{\'a}tico, Facultad de
Ciencias, Universidad de Granada, 18071 Granada, Spain.}
\email{pacopolo@ugr.es}
\email{aperalta@ugr.es}

%\thanks{}

\subjclass[2010]{Primary 47B49, Secondary 46A22, 46B20, 46B04, 46A16, 46E40, .}

\keywords{Tingley's problem; extension of isometries; JB$^*$-triples; compact operators}

\date{}

\begin{abstract} Let $E$ and $B$ be arbitrary weakly compact JB$^*$-triples whose unit spheres are denoted by $S(E)$ and $S(B)$, respectively. We prove that every surjective isometry $f: S(E) \to S(B)$ admits an extension to a surjective real linear isometry $T: E\to B$. This is a complete solution to Tingley's problem in the setting of weakly compact JB$^*$-triples. Among the consequences, we show that if $K(H,K)$ denotes the space of compact operators between arbitrary complex Hilbert spaces $H$ and $K$, then every surjective isometry $f: S(K(H,K)) \to S(K(H,K))$ admits an extension to a surjective real linear isometry $T: K(H,K)\to K(H,K)$.
\end{abstract}

\maketitle
\thispagestyle{empty}

\section{Introduction}

It does not seem an easy task to write an introductory paragraph for a problem which has been open since 1987. As in most important problems, the precise question is easy to pose and reads as follows:  Let $X$ and $Y$ be normed spaces, whose unit spheres are denoted by $S(X)$ and $S(Y)$, respectively. Suppose $f:S(X)\to S(Y)$ is a surjective isometry.  The so-called \emph{Tingley's problem} asks wether $f$ can be extended to a real linear (bijective) isometry $T : X \to Y$ between the corresponding spaces (see \cite{Ting1987}). \smallskip

The problem was named after D. Tingley proved in \cite[THEOREM, page 377]{Ting1987} that every surjective isometry $f: X\to Y$ between the unit spheres of two finite dimensional spaces satisfies $f(-x) = -f(x)$ for every $x\in S(X)$. \smallskip

Readers interested in a classic motivation, can sail back to the celebrated Mazur-Ulam theorem asserting that every surjective isometry between two normed spaces over $\mathbb{R}$ is a real affine function. In a subsequent paper,  P. Mankiewicz established in \cite{Mank1972} that, given two convex bodies $V\subset X$ and $W\subset Y$, every surjective isometry $g$ from
$V$ onto $W$ can be uniquely extended to an affine isometry from $X$ onto $Y$. Consequently, every surjective isometry between the closed unit balls of two Banach spaces $X$ and $Y$ extends uniquely to a real linear isometric isomorphism from $X$ into $Y$.\smallskip

With this historical background in mind, Tingley's problem asks whether the conclusion in Mankiewicz's  theorem remains true when we deal with the unit sphere whose interior is empty. In other words, when an isometric identification of the unit spheres of two normed spaces can produce an isometric (linear) identification of the spaces.\smallskip

As long as we know, Tingley's problem remains open even for surjective isometries between the unit spheres of a pair of 2 dimensional Banach spaces. However, positive answers haven been established in a wide range of classical Banach spaces. In an interesting series of papers, G.G. Ding proved that Tingley's problem admits a positive answer for every surjective isometry  $f: S(\ell^p (\Gamma)) \to S(\ell^p (\Delta))$ with $1\leq p \leq \infty$ (see \cite{Ding2002,Di:p,Di:8} and \cite{Di:1}). More recently, D. Tan showed that the same conclusion remains true for every surjective isometry $f : S(L^{p}(\Omega, \Sigma, \mu)) \to S(Y)$, where $(\Omega, \Sigma, \mu)$ is a $\sigma$-finite measure space, $1\leq p\leq \infty$, and $Y$ is a Banach space (compare \cite{Ta:8, Ta:1} and \cite{Ta:p}). A result of R.S. Wang in \cite{Wang} proves  that for each pair of locally compact Hausdorff spaces $L_1$ and $L_2$, every surjective isometry $f: S(C_0(L_1)) \to S(C_0(L_1))$ admits an extension to a surjective real linear isometry from $C_0(L_1)$ onto $C_0(L_1)$. V. Kadets and M. Martín gave another positive answer to Tingley's problem in the case of finite dimensional polyhedral Banach spaces (see \cite{KadMar2012}). The surveys \cite{Ding2009} and \cite{YangZhao2014} contain a detailed revision of these results and additional references.\smallskip

In the setting of C$^*$-algebras, R. Tanaka recently establishes in \cite{Tan2016-2} that every surjective isometry from the unit sphere of a finite dimensional C$^*$-algebra $N$ into the unit sphere of another C$^*$-algebra $M$ admits a unique extension to a surjective real linear isometry from $N$ onto $N$. More recently, Tanaka also proves in \cite{Tan2016preprint} that the same conclusion holds when $N$ and $M$ are finite von Neumann algebras.\smallskip

As a result of a recent collaboration between the second author of this note and R. Tanaka (see \cite{PeTan16}), new positive answers to Tingley´s problem have been revealed for spaces of compact operators. Concretely, denoting by $K(H)$ the C$^*$-algebra of all compact operators on a complex Hilbert space $H$, it is shown that for every pair of complex Hilbert spaces $H$ and $H'$, every surjective isometry $f: S(K(H))\to S(K(H'))$ admits a unique extension to a real linear isometry $T$ from $K(H)$ onto $K(H')$; and the same conclusion also holds when $K(H)$ and $K(H')$ are replaced by arbitrary compact C$^*$-algebras (compare \cite[Theorem 3.14]{PeTan16}). The novelties in the just quoted note rely on the introduction of Jordan techniques to tackle Tingley's problem. From the wider point of view of weakly compact JB$^*$-triples, it is established that if $E$ and $B$ are weakly compact JB$^*$-triples not containing direct summands of rank smaller than or equal to 3, and with rank greater than or equal to 5, then surjective isometry $f: S(E) \to S(B)$ admits a unique extension to a surjective real linear isometry from $E$ onto $B$ (compare \cite[Theorem 3.13]{PeTan16} and see below for the concrete definitions of weakly compact JB$^*$-triples).\smallskip

The case of surjective isometries between the unit spheres of two weakly compact JB$^*$-triples of rank between 2 and 4 was a left as an open problem in \cite{PeTan16}. This problem affects particularly interesting cases including spin factors, finite dimensional Cartan factors, and space of the form $K(H,H')$ where  $H$ and $H'$ are complex Hilbert spaces with dim$(H)=\infty$ and $2\leq $dim$(H')\leq 4$. In this paper we complete the study left open in \cite{PeTan16}, solving Tingley's problem in the remaining cases of weakly compact JB$^*$-triples of low rank and providing a complete solution of Tingley's problem for surjective isometries between the unit spheres of two weakly compact JB$^*$-triples. Besides Jordan techniques, the arguments applied in this note are strongly based on new geometric properties for Cartan factors, and results in Functional Analysis and Operator algebras. \smallskip

In order to have a precise idea of the results explored in this paper, it seems necessary to arrange the definition of those complex Banach spaces called \emph{Cartan factors}.  All Banach spaces considered in this note are complex. Let $H$ and $H^\prime$ be complex Hilbert spaces, and let $j : H\to H$ be a conjugation (i.e., a conjugate linear isometry of order two) on $H$. A Banach space is called a \emph{Cartan factor of type 1} if it coincides with the space $L(H,H^\prime)$ of all bounded linear operators from $H$ into $H^\prime$. To understand the next Cartan factors let $^t: L(H)\to L(H)$ be the complex linear involution on $L(H)$ defined by $x^t = j x^* j$ ($x\in L(H)$). The spaces $$C_2 =\{x\in L(H) : x^t =-x\} \hbox{ and } C_3 =\{x\in L(H) : x^t =x\}$$ are closed subspaces of $L(H)$ called \emph{Cartan factors of type 2 and 3}, respectively. A Banach space $X$ is called a \emph{Cartan factor of type 4} or \emph{spin} if $X$ admits a complete inner product $(.|.)$ and a conjugation $x\mapsto \overline{x},$ for which the norm of $X$ is given by $$ \|x\|^2 = (x|x) + \sqrt{(x|x)^2 -|
(x|\overline{x}) |^2}.$$ \emph{Cartan factors of types 5 and 6} (also called \emph{exceptional} Cartan factors) are all finite dimensional. We refer to \cite{Chu2012} for additional details.\smallskip

According to the terminology employed by L. Bunce and  Ch.-H. Chu in \cite{BuChu}, associated with each Cartan factor we find an \emph{elementary JB$^*$-triple}. The elementary JB$^*$-triples of types 1 to 6 ($K_1, \ldots, K_6$) are defined as follows: types 4, 5 and 6 are precisely the Cartan factors of the same type, i.e. $K_4 = C_4,$ $K_5 = C_5$, and $K_6 = C_6$. For the other types, we introduce some additional notation. The symbol $K(H,H^\prime)$ will denote the compact linear operators from $H$ into $H^\prime$. The elementary JB$^*$-triples of types 1, 2 and 3 are $K_1 =K(H,H^\prime)$, $K_2=C_2\cap K(H),$ and $K_3=C_3\cap K(H)$, respectively. \smallskip

Elementary JB$^*$-triples are the building blocks of weakly compact JB$^*$-triple, more precisely, every weakly compact JB$^*$-triple can be written as a $c_0$-sum of a family of elementary
JB$^*$-triples (see \cite[Lemma 3.3 and Theorem 3.4]{BuChu}).\smallskip

The main result in this note (Theorem \ref{thm Tingley thm ofr weakly compact JB*-triples}) establishes that every surjective isometry $f: S(E) \to S(B)$ between the unit spheres of two arbitrary weakly compact JB$^*$-triples admits a unique extension to a surjective real linear isometry $T: E\to B$. The proof, which requires a substantial technical novelty, is divided into different partial results. In Section \ref{sec: sintesis} we explore Tingley's problem when the domain is an elementary JB$^*$-triple of type 1, 2, 3, spin or finite dimensional (see Theorems \ref{thm Tingley thm for type 1 rank 2, 3 and 4}, \ref{thm Tingley thm for type 2 rank 2, 3 and 4}, \ref{thm Tingley thm for type 3 rank 2, 3 and 4}, \ref{thm Tingley thm for finite dimensional}, and \ref{thm Tingley thm for spin factors}). Previously, in Section \ref{sec: geometric properties}, we establish additional properties satisfied by every surjective isometry between the unit spheres of two elementary JB$^*$-triples. This geometric study complete the result obtained in \cite{PeTan16} for elementary JB$^*$-triples with rank greater than or equal to 5. For completeness reasons, the structure of the paper is supplemented with a subsection with basic notions, results and references applied in this note.\smallskip

It seems natural to conclude this introduction with a desiderata; once Tingley's problem has been solved for compact operators, compact C$^*$-algebras and weakly compact JB$^*$'triples, the unexplored frontier appears now in the case of $B(H)$, general von Neumann algebras and C$^*$-algebras, and other operator spaces.

\subsection{Basic background on JB$^*$-triples}

In this subsection we revisit the basic notions and result in JB$^*$-triples. The reader who is familiar with these concepts can simply skip this part.\smallskip

Throughout this note the symbol $\mathcal{B}_X$ will stand for the closed unit ball of a Banach space $X$.\smallskip

Let $H$ and $H^\prime$ a pair of complex Hilbert spaces. In most of cases, the space $L(H,H^\prime)$ is not a C$^*$-algebra. An example of this claim appears when $H$ is finite dimensional and $H^\prime$ is infinite dimensional. However, the operator norm and the triple product defined by \begin{equation}\label{eq triple product for operators} \{a,b,c\}=\frac12( ab^* c + c b^* a) \ (a,b,c\in L(H,H^\prime))
 \end{equation} equip $L(H,H^\prime)$ with a structure of JB$^*$-triple in the sense introduced by W. Kaup in \cite{Ka83}. A \emph{JB$^*$-triple} is a complex Banach space $E$ admitting a continuous triple product $\{a,b,c\}$ which is conjugate linear in $b$ and linear and symmetric in $a$ and $c$, and satisfies the following axioms:\begin{enumerate}[(JB$^*$1)]\item $L(a,b) L(c,d) - L(c,d) L(a,b) = L(L(a,b) (c),d) - L(c,L(b,a)(d))$, for every $a,b,c,d$ in $E$, where $L(a,b)$ is the operator on $E$ defined by $L(a,b) (x) =\{a,b,x\}$;
\item $L(a,a)$ is an hermitian operator on $E$ with non-negative spectrum;
\item $\|\{a,a,a\}\| = \|a \|^3$, for every $a\in E$.
\end{enumerate}

Examples of JB$^*$-triples include the spaces $L(H,H^\prime)$, the spaces $K(H,H^\prime)$ of all compact operators between two complex Hilbert spaces, complex Hilbert spaces, and all C$^*$-algebras equipped with the same product defined in \eqref{eq triple product for operators}. JB$^*$-triples constitute a category which produces a Jordan model valid to generalize C$^*$-algebras. Every JB$^*$-algebra is a JB$^*$-triple under the triple product $$\{a,b,c\}= (a \circ b^*)\circ c + (c\circ b^*)\circ a - (a\circ c) \circ b^*.$$

Cartan factors and elementary JB$^*$-triples of types 1, 2 and 3 are JB$^*$-triples with respect to the product given in \eqref{eq triple product for operators}. Spin factors are JB$^*$-triples when equipped with the product $\{x, y, z\} = (x|y)z + (z|y) x -(x|\overline{z})\overline{y}.$ The exceptional Cartan factor $C_6$ is a JB$^*$-algebra and $C_5$ is a JB$^*$-subtriple of $C_6$.\smallskip

A closed subtriple $I$ of a JB$^*$-triple $E$ is called an \emph{ideal} in $E$ if $\{I,E,E\} + \{E,I,E\} \subseteq I$ holds. A JB$^*$-triple which cannot be decomposed into the direct sum of two non-trivial closed ideals is called a \emph{JB$^*$-triple factor}. \smallskip

Those JB$^*$-triples which are also dual Banach spaces are called \emph{JBW$^*$-triples}. Therefore, von Neumann algebras are examples of JBW$^*$-triples. Analogously as in the case of von Neumann algebras, JBW$^*$-triples admit a unique (isometric) predual and their triple product is separately weak$^*$-continuous (see \cite{BarTi86}). The bidual, $E^{**}$, of a JB$^*$-triple, $E$, is a JBW$^*$-triple with a triple product extending the triple product of $E$ (cf. \cite{Di86}). Cartan factors are all JBW$^*$-triple factors.\smallskip

Partial isometries in a C$^*$-algebra can be generalized to tripotents in a JB$^*$-triple. More concretely, an element $u$ in a JB$^*$-triple $E$ is a \emph{tripotent} if $\{u,u,u\}=u$. The set of all tripotents in a JB$^*$-triple $E$ will be denoted by the symbol $\mathcal{U} (E)$. For each tripotent $u$ in $E$, the mappings $P_{i} (u) : E\to E_{i} (u)$, $(i=0,1,2)$, defined by
\begin{equation}\label{eq pepe} P_2 (u) = L(u,u)(2 L(u,u) -id_{E}), \ P_1 (u) = 4
L(u,u)(id_{E}-L(u,u)),
\end{equation}
$$\ \hbox{ and } P_0 (u) =
(id_{E}-L(u,u)) (id_{E}-2 L(u,u)),$$
 are contractive linear projections. Each $P_j (u)$ is known as the \emph{Peirce-$j$} projection induced by $u$, and corresponds to the projection of $E$ onto the eigenspace $E_j(u)$ of $L(u, u)$ corresponding to the eigenvalue $\frac{j}{2}$. The Peirce decomposition of $E$ relative to $u$ writes $E$ in the form $$E= E_{2} (u) \oplus E_{1} (u)
\oplus E_0 (u).$$ The following \emph{Peirce rules} are satisfied, \begin{equation} \label{eq peirce rules1} \J {E_{2}
(u)}{E_{0}(u)}{E} = \J {E_{0} (u)}{E_{2}(u)}{E} =0,
\end{equation} \begin{equation}
\label{eq peirce rules2} \J {E_{i}(u)}{E_{j} (u)}{E_{k}
(u)}\subseteq E_{i-j+k} (u),
\end{equation} where $E_{i-j+k} (u)=0$ whenever
$i-j+k \notin \{ 0,1,2\}$ (compare \cite{FriRu85}). It follows from \eqref{eq pepe} and the separate weak$^*$-continuity of the triple product that Peirce projections associated with a tripotent in a JBW$^*$-triple are weak$^*$ continuous. It is further known that $E_2(u)$ is a JB$^*$-algebra with product and involution determined by $a\circ_{u} b =\{a,u,b\}$ and $a^{\sharp_{u}} =\{u,a,u\}$, respectively. \smallskip

A non-zero tripotent $u$ is called \emph{minimal} if $E_2 (u) = \mathbb{C} u$, \emph{complete} if the Peirce subspace $E_0(u)$ vanishes, and \emph{unitary} if $E = E_2 (e)$.\smallskip

Two tripotents $u,e$ in $E$ are said to be \emph{orthogonal} (written $e\perp u$) if $\{e,e,u\} =0$ (or equivalently, $\{u,u,e\} =0$). Actually, the relation $\perp$ can be considered on the whole $E$ by defining $a\perp b$ if $\{a,a,b\}=0$ (see \cite[\S 1]{BurFerGarMarPe} for additional details). It is known that orthogonal elements in $E$ are geometrically $M$-orthogonal (see \cite[Lemma 1.3]{FriRu85} and \cite[Lemma 1.1]{BurFerGarMarPe}), i.e. \begin{equation}\label{eq orthgonal implies M-orthogonal} a\perp b \Rightarrow \| a\pm b \| = \max \{ \| a\|, \|b\|\}.
\end{equation} The relation of orthogonality is adequate to define a partial order in $\mathcal{U} (E)$ given by $u\leq e$ if $e-u$ is a tripotent orthogonal to $e$ (see, for example, \cite{FriRu85} or \cite{Horn87}). The set $\mathcal{U} (E)$ is a lattice with respect to the partial order $\leq$. A tripotent $u$ in a JBW$^*$-triple $W$ is minimal if and only if it is a minimal element in the lattice $(\mathcal{U} (E),\leq)$.\smallskip

A subset $\mathcal S$ in a JB$^*$-triple $E$ is called \emph{orthogonal} if $0\notin \mathcal{S}$ and $x\perp y$ for every $x\neq y$ in $\mathcal{S}$. The \emph{rank} of $E$ (denoted by rank$(E)$ or by $r(E)$) is the minimal cardinal $r$ satisfying that $\sharp \mathcal{S}\leq r$ for every orthogonal subset of $E$ (compare \cite{Ka97}). The rank of a tripotent $e$ in $E$ is defined as the rank of the JB$^*$-triple $E_2(e)$. JB$^*$-triples of finite rank are all reflexive, they are described, for example in \cite{BeLoPeRo}. It is known that in a Cartan factor $C$, the rank of $C$ is precisely the minimal cardinal $r$ satisfying that $\sharp \mathcal{S}\leq r$ for every orthogonal subset of minimal tripotents in $E$ (see \cite[\S 3 or Theorem 5.8]{Ka97} and \cite{BeLoPeRo}).\smallskip

Let $u,v$ be tripotents in a JB$^*$-triple $E$. We say that $u$ and $v$ are \emph{collinear} (written $u\top v$) if $u\in E_1(v)$ and $v\in E_1(u)$.\smallskip

Given $u,v\in \mathcal{U} (E)$ with $u \in E_l(v)$ for some $l=0,1,2$ then \begin{equation}\label{eq compatible tripotents} P_k(u) P_j (v) = P_j (v) P_k (u),
 \end{equation} for all $j,k\in \{0,1,2\}$ (compare \cite[$(1.9)$ and $(1.10)$]{Horn87}).\smallskip

There are JB$^*$-triples $E$ for which $\mathcal{U} (E)$ is empty. However, since in a JB$^*$-triple $E$ the extreme points of its closed unit ball are precisely the complete tripotents in $E$ (see \cite[Proposition 3.3]{KaUp}), we can obviously conclude, via Krein-Milman theorem, that every JBW$^*$-triple contains a huge set of tripotents.\smallskip

For each normal functional $\varphi$ in the predual of a JBW$^*$-triple $W$, there exists a unique tripotent $u\in W$ (called the \emph{support tripotent} of $\varphi$ in $W$) satisfying $\varphi = \varphi P_2 (u)$ and $\varphi|_{M_2 (u)}$ is a faithful positive normal functional on the JBW$^*$-algebra $W_2 (u)$ (see \cite[Proposition 2]{FriRu85}). The support tripotent of a normal functional $\varphi$ will be denoted by $e(\varphi)$. Suppose $u$ is another tripotent in $W$ such that $\varphi (u) =1 =\|\varphi\|$. It follows from the arguments in \cite[Propositions 1 and 2]{FriRu85} and their proofs that $u = s(\varphi) +P_0 (s(\varphi)) (u)$ (i.e. $e\leq u$). Actually, the same arguments show the following:
\begin{equation}\label{eq supportd at the support} x\in W, \hbox{ with } \|x\| =1= \varphi (x) =\|\varphi\| \Rightarrow x = s(\varphi) +P_0 (s(\varphi)) (x).
\end{equation}

By Proposition 4 in \cite{FriRu85} we also know that minimal tripotents in a JBW$^*$-triple $W$ are precisely the support tripotents of the extreme points of the closed unit ball of its predual.\smallskip

Suppose $x$ is an element in a JB$^*$-triple $E$. The symbol $E_x$ will denote the JB$^*$-subtriple generated by $x$, that is, the closed subspace generated by all odd powers of the form $x^{[1]} := x$, $x^{[3]} := \J xxx$, and $x^{[2n+1]} := \J xx{x^{[2n-1]}},$ $(n\in \NN)$. It is known that
$E_x$ is JB$^*$-triple isomorphic (and hence isometric) to a commutative C$^*$-algebra in which $x$ is a positive generator (cf. \cite[Corollary 1.15]{Ka83}). Suppose $x$ is a norm-one element. The sequence $(x^{[2n -1]})$ converges in the weak$^*$-topology of $E^{**}$ to a tripotent (called the \hyphenation{support}\emph{support} \emph{tripotent} of $x$) $u(x)$ in $E^{**}$ (see \cite[Lemma 3.3]{EdRutt88} or \cite[page 130]{EdFerHosPe2010}).\smallskip

C.M. Edwards and G.T. R\"{u}ttimann developed in \cite{EdRu96} an analogue to the notion of compact projections in the bidual of a C$^*$-algebra in the more general setting of JB$^*$-triples. A tripotent $e$ in the second dual, $E^{**}$, of a JB$^*$-triple $E$ is said to be \emph{compact-$G_{\delta}$} if there exists a norm one element $a$ in $E$ satisfying $u(a) =e$. A tripotent $e$ in $E^{**}$ is called \emph{compact} if $e=0$ or it is the infimum of a decreasing net of compact-$G_{\delta}$ tripotents in $E^{**}$. The symbol $\mathcal{U}_{c} (E^{**})$ will stand for  the lattice consisting of those compact tripotents in $E^{**}$ equipped with the order $\leq$.\smallskip

Theorem 3.4 in \cite{BuFerMarPe} shows that minimal tripotents in $E^{**}$ are compact, and consequently the relation $\hbox{min } \mathcal{U} (E^{**}) \subseteq \hbox{min } \mathcal{U}_{c} (E^{**})$ holds, where ``min'' denotes the minimal elements in the corresponding lattice. It is also observed in \cite[page 47, comments after Theorem 3.4]{BuFerMarPe} that \begin{equation}\label{eq minimal and mimimal compact coincide} \hbox{min } \mathcal{U} (E^{**}) = \hbox{min } \mathcal{U}_{c} (E^{**}).
\end{equation}

The weak$^*$-closed ideal $J(v)$ generated by a minimal tripotent $v$ in a JBW$^*$-triple $W$, is precisely a Cartan factor. It is further know that $W$ writes as the direct orthogonal sum of $J(v)$ and another weak$^*$-closed of $W$ (see \cite[MAIN THEOREM in page 302]{DanFri87}).

\section{ Tingley's problem for weakly compact JB$^*$-triples which are not factors}\label{sec:2}

In order to resume the study of surjective isometries between the unit spheres of two weakly compact JB$^*$-triples, we start by recalling a result in \cite{PeTan16}.

\begin{proposition}\label{p surjective isometries between the spheres preserve finite rank tripotents}\cite[Propositions 3.2 and 3.9]{PeTan16} Let $E$ and $B$ be weakly compact JB$^*$-triples, and  suppose that $f: S(E) \to S(B)$ is a surjective isometry. Then the following statements hold:\begin{enumerate}[$(a)$] \item For each finite rank tripotent $e$ in $E$ there exists a unique finite rank tripotent $u$ in $B$ such that $f( (e + \mathcal{B}_{ E_0^{**}(e)}) \cap E )= (u + \mathcal{B}_{B_0^{**}(u)}) \cap B$;
\item The restriction of $f$ to each norm closed face of $\mathcal{B}_{E}$ is an affine function;
\item For each finite rank tripotent $e$ in $E$ there exists a unique finite rank tripotent $u$ in $B$ and a surjective real linear isometry $T_e : E_0(e) \to B_0(u)$ such that $$f(e +x )= u + T_e (x),$$ for every $x\in \mathcal{B}_{E_0(e)}$, and $T_e (x) = f(x)$ for all $x\in S(E_0(e))$;
\item For each finite rank tripotent $e$ in $E$ there exists a unique finite rank tripotent $u$ in $B$ such that $f(e )= u$.$\hfill\Box$
\end{enumerate}
\end{proposition}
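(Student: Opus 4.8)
The plan is to reduce all four statements to a single deep fact---that $f$ transports the facial structure of $\mathcal{B}_E$ onto that of $\mathcal{B}_B$---and to Mankiewicz's theorem \cite{Mank1972}, which asserts that every surjective isometry between the closed unit balls of two Banach spaces extends to a (real) affine isometry. Throughout, write $F_e := (e + \mathcal{B}_{E_0^{**}(e)})\cap E$ for the norm-closed face of $\mathcal{B}_E$ supported by a tripotent $e$. When $e$ has finite rank in the weakly compact triple $E$ we have $e\in E$ and $E_0^{**}(e)\cap E = E_0(e)$, a weakly compact JB$^*$-subtriple, so $F_e = e + \mathcal{B}_{E_0(e)}$ is an isometric translate of the closed unit ball of $E_0(e)$. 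I would establish $(a)$ first, deduce $(b)$ and $(c)$ from it by Mankiewicz's theorem, and read off $(d)$ by evaluating at the centre of the ball.

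For $(a)$ I would begin from the Edwards--R\"{u}ttimann description of the facial structure of a JB$^*$-triple \cite{EdRu96}: the norm-closed proper faces of $\mathcal{B}_E$ are exactly the sets $F_e$ with $e$ a nonzero compact tripotent of $E^{**}$, the assignment $e\mapsto F_e$ is an anti-order-isomorphism onto the complete lattice of such faces, and minimal tripotents correspond to the maximal convex subsets of $S(E)$. Since $E$ is weakly compact, hence a $c_0$-sum of elementary triples, every compact tripotent is the supremum of the minimal tripotents it dominates, so each norm-closed face is an intersection of maximal convex subsets of $S(E)$. The analytic heart of the matter---which I expect to be the main obstacle---is the assertion that a surjective isometry between two unit spheres carries maximal convex subsets onto maximal convex subsets; this rests on a purely metric characterization of maximality and is applied to both $f$ and $f^{-1}$. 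Granting it, $f$ and $f^{-1}$ are mutually inverse bijections preserving maximal faces and arbitrary intersections, so $f$ preserves the entire lattice of norm-closed faces. For a finite rank tripotent, written as an orthogonal sum of minimal tripotents $e = e_1 + \cdots + e_n$, the $M$-orthogonality \eqref{eq orthgonal implies M-orthogonal} yields $F_e = \bigcap_{i=1}^{n} F_{e_i}$; applying $f$ shows $f(F_e)$ to be a finite intersection of maximal faces, hence equal to $F_u$ for a compact tripotent $u$ whose rank is $n$ because $f$ respects the height in the lattice. The tripotent $u$ is unique, since distinct compact tripotents support distinct faces.

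The preservation of the face lattice immediately gives $(b)$: for a minimal tripotent $v$ the faces $F_v$ and $f(F_v) = F_{v'}$ are translates of the closed unit balls of $E_0(v)$ and $B_0(v')$, so Mankiewicz's theorem makes $f|_{F_v}$ affine; as every norm-closed face of $\mathcal{B}_E$ is contained in some maximal face $F_v$, the restriction of $f$ to it is affine as well. For a finite rank tripotent $e$ with $f(F_e) = F_u$ as in $(a)$, the map $x\mapsto f(e+x)-u$ is a surjective isometry of $\mathcal{B}_{E_0(e)}$ onto $\mathcal{B}_{B_0(u)}$, and Mankiewicz's theorem extends it to a surjective real linear isometry $T_e\colon E_0(e)\to B_0(u)$ with $f(e+x) = u + T_e(x)$ for every $x\in\mathcal{B}_{E_0(e)}$; this is the formula in $(c)$.

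It remains to verify the identity $T_e(x) = f(x)$ for $x\in S(E_0(e))$ and to deduce $(d)$. Here I would invoke the antipodal law $f(-y) = -f(y)$ (which I would establish first, in the spirit of Tingley's original finite-dimensional theorem): applied to the opposite face it identifies the linear isometry obtained from $F_{-e}$ with $T_e$, so that $f(-e+x) = -u + T_e(x)$. Given $x\in S(E_0(e))$, the segment $\{\lambda e + x : \lambda\in[-1,1]\}$ lies entirely on $S(E)$ inside a single maximal face, on which $f$ is affine by $(b)$, whence evaluating at its midpoint gives
\[
f(x) = \tfrac12 f(e+x) + \tfrac12 f(-e+x) = \tfrac12\big((u+T_e(x)) + (-u+T_e(x))\big) = T_e(x).
\]
Finally, putting $x = 0$ in $(c)$ yields $f(e) = u + T_e(0) = u$, a finite rank tripotent uniquely determined as the single value $f(e)$; this is $(d)$.
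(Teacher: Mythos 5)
First, a point of reference: the paper does not prove this proposition at all --- it is quoted verbatim from \cite[Propositions 3.2 and 3.9]{PeTan16} and closed with a $\Box$. So your reconstruction can only be measured against the logical architecture of the present paper and of the cited source. Your overall strategy (Edwards--R\"uttimann facial structure \cite{EdRu96}, preservation of maximal convex subsets of the sphere under a surjective isometry, intersections of maximal faces to capture finite rank tripotents, and Mankiewicz's theorem \cite{Mank1972} on each face) is indeed the strategy of the cited source, and parts $(a)$, $(b)$, the formula in $(c)$, and $(d)$ are essentially recoverable along your lines. One small repair is needed even there: Mankiewicz produces an \emph{affine} isometry, not a linear one, and your derivation of $(d)$ by ``putting $x=0$ in $(c)$'' presupposes the linearity of $T_e$, which in turn presupposes $f(e)=u$. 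The gap is closed by observing that $F_e=e+\mathcal{B}_{E_0(e)}$ is a convex body in the affine subspace $e+E_0(e)$ whose unique centre of symmetry is $e$, and that a surjective affine isometry of convex bodies carries the centre of symmetry to the centre of symmetry; this yields $f(e)=u$ first and then the linearity of $T_e(x):=f(e+x)-u$. You should also be explicit that the ``analytic heart'' you grant --- that surjective isometries between unit spheres map maximal convex subsets onto maximal convex subsets --- is an external theorem being imported, not something that follows from the metric characterization by inspection.

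The genuine gap is your treatment of the identity $T_e(x)=f(x)$ for $x\in S(E_0(e))$. You invoke the antipodal law $f(-y)=-f(y)$, proposing to ``establish it first, in the spirit of Tingley's original finite-dimensional theorem.'' This cannot work as stated, for two reasons. First, Tingley's theorem \cite{Ting1987} is strictly finite dimensional, and the antipodal problem for surjective isometries between unit spheres of infinite-dimensional Banach spaces is open in general; weakly compact JB$^*$-triples are typically infinite dimensional. Second, and more seriously, within this paper the antipodal property for finite rank tripotents is precisely Theorem \ref{t Tingley antipodes for finite rank new}, whose proof runs through Proposition \ref{p l 3.4 new} \emph{and} through the present proposition (it is obtained by rerunning the proof of \cite[Theorem 3.6]{PeTan16}, which rests on \cite[Propositions 3.2 and 3.9]{PeTan16}). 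Using the antipodal law to identify $f(F_{-e})$ with $F_{-u}$ and its linear part with $T_e$ therefore makes your argument circular relative to the development you are embedded in. To repair it you would either have to prove $f(-e-x)=-f(e+x)$ on the face $F_e$ by an independent route (for instance, for minimal $e$, via the distance-$2$ argument of Proposition \ref{p l 3.4 new}, which does not depend on the present proposition, combined with rank preservation), or prove $T_e(x)=f(x)$ directly by locating $x$ and $e+x$ inside a common face $F_v$ supported by a minimal tripotent $v\leq x$ orthogonal to $e$ and comparing the two affine descriptions of $f$ there. As written, the last step of your proposal is unsupported.
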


One of the obstacles to the questions left open in \cite{PeTan16} is mainly due to the fact that the arguments in \cite[Lemma 3.4]{PeTan16} are not valid for arbitrary weakly compact JB$^*$-triples. We begin the new contributions with a generalization of \cite[Lemma 3.4]{PeTan16} for general JB$^*$-triples.

\begin{proposition}\label{p l 3.4 new} Let $e$ and $x$ be norm-one elements in a JB$^*$-triple $E$. Suppose that $e$ is a minimal tripotent and $\|e-x\| = 2$. Then $x= -e + P_0(e) (x)$.
\end{proposition}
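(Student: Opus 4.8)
The plan is to work inside the JBW$^*$-triple $E^{**}$ and exploit the minimality of $e$ together with the duality between minimal tripotents and extreme points of the predual unit ball. Since $e$ is a minimal tripotent in $E$ (hence in $E^{**}$), Proposition 4 in \cite{FriRu85} (as quoted in the excerpt) provides a normal functional $\varphi$ in the predual of $E^{**}$ whose support tripotent $s(\varphi)=e$, with $\varphi$ an extreme point of the closed unit ball of the predual; in particular $\|\varphi\|=1$ and $\varphi(e)=1$. The key computation will be to evaluate $\varphi$ at $x$. The hypothesis $\|e-x\|=2$ together with $\|x\|=1=\|e\|$ should force $\varphi(x)=-1$; indeed $2=\|e-x\|\geq |\varphi(e-x)| = |1-\varphi(x)|$, and since $|\varphi(x)|\leq 1$, the only way to reach the value $2$ is $\varphi(x)=-1$.

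Once I have $\varphi(x)=-1$, I would apply the functional $\varphi$ to $-x$, obtaining $\varphi(-x)=1=\|\varphi\|=\|-x\|$. This is exactly the situation covered by the support characterization \eqref{eq supportd at the support} in the excerpt: a norm-one element on which a norm-one functional attains its norm must decompose relative to the support tripotent. Applying \eqref{eq supportd at the support} to the element $-x$ and the functional $\varphi$ (with $s(\varphi)=e$) yields
\begin{equation*}
-x = e + P_0(e)(-x),
\end{equation*}
and hence $x = -e + P_0(e)(x)$, which is precisely the claimed conclusion. The Peirce projection $P_0(e)$ here is the weak$^*$-continuous projection of $E^{**}$ associated with $e$; I would note that because $e$ is minimal, the component in the Peirce-$2$ space is forced to be $-e$ and the Peirce-$1$ component vanishes, which is what \eqref{eq supportd at the support} encodes.

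The main obstacle, and the point requiring care, is the passage to the bidual and the verification that the resulting decomposition actually lives in $E$ rather than merely in $E^{**}$. A priori \eqref{eq supportd at the support} is stated for elements of the JBW$^*$-triple $W=E^{**}$, so I would need $x$ to sit inside a suitable Peirce decomposition compatible with the ambient space $E$. Since $x\in E\subseteq E^{**}$ and the conclusion $x=-e+P_0(e)(x)$ is an identity among elements of $E^{**}$, I must check that $P_0(e)(x)$ is genuinely an element of $E$; this follows because $-e$ is a single minimal tripotent in $E^{**}$ while $x\in E$, so $P_0(e)(x)=x+e$ is automatically in the closed subspace $E$ (as $e\in E$ by hypothesis). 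I would also double-check the extreme-point/support step: the existence of an extremal $\varphi$ with $s(\varphi)=e$ is exactly the content cited from \cite[Proposition 4]{FriRu85}, so the only real work is the elementary norm estimate forcing $\varphi(x)=-1$ and the correct invocation of \eqref{eq supportd at the support}.
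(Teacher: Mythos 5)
There is a genuine gap at the central step of your argument, namely the claim that $\varphi(x)=-1$. The inequality you write, $2=\|e-x\|\geq |\varphi(e-x)|=|1-\varphi(x)|$, only bounds $|1-\varphi(x)|$ \emph{from above} by $2$; it does not force $|1-\varphi(x)|$ to \emph{equal} $2$. The norm $\|e-x\|$ is a supremum over the whole dual unit ball, and there is no reason why this supremum should be attained at the particular functional $\varphi$ you fixed in advance (the extreme point of the predual ball whose support tripotent is $e$). For a minimal tripotent $e$ that functional is exactly $\phi_e$, so the assertion $\varphi(x)=-1$ is equivalent to $P_2(e)(x)=-e$, which is essentially the whole content of the proposition (the vanishing of $P_1(e)(x)$ then follows from \eqref{eq supportd at the support} or \cite[Lemma 1.6]{FriRu85}, as you say). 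In other words, the step you treat as an elementary norm estimate is precisely the point where all the work has to happen, and as written the argument is circular: $\varphi(x)=-1$ is a consequence of the conclusion, not of the hypothesis via your inequality.

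The strategy can be repaired, but only by reversing the order of quantification: instead of starting from the support functional of $e$, start from a norm-attaining functional for $e-x$. Since $\mathcal{B}_{E^*}$ is weak$^*$-compact, choose $\psi\in E^{*}=(E^{**})_*$ with $\|\psi\|=1$ and $\psi(e-x)=2$; then $\psi(e)=1=\psi(-x)$, so \eqref{eq supportd at the support} applied to $e$ gives $e=s(\psi)+P_0(s(\psi))(e)$, i.e. $s(\psi)\leq e$, and minimality of $e$ forces $s(\psi)=e$; a second application of \eqref{eq supportd at the support}, now to $-x$, yields $-x=e+P_0(e)(-x)$. This repaired route is a legitimate alternative to the paper's proof, which instead takes the support tripotent $u$ of the norm-one element $\frac{e-x}{2}$ in $E^{**}$, uses atomicity of the lattice of compact tripotents to find a minimal $v\leq u$, splits the identity $\frac12 P_2(v)(e)+\frac12 P_2(v)(-x)=v$ by extremality of $v$ in the unit ball of $E_2^{**}(v)=\mathbb{C}v$, and then invokes minimality of $e$ to conclude $e=v$. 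Both arguments hinge on producing an object (a functional or a minimal tripotent) adapted to the \emph{difference} $e-x$ and only afterwards identifying it with $e$; your proposal omits that identification entirely. Your final paragraph about $P_0(e)(x)$ lying in $E$ is not an issue: $e\in E$, so $P_0(e)$ maps $E$ into $E$ in any case.
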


\begin{proof} The element $\frac{e-x}{2}$ has norm-one in $E$.  Let $u= s(\frac{e-x}{2})$ denote the support tripotent of $\frac{e-x}{2}$ in the JBW$^*$-triple $E^{**}$. It is known that $u$ is a compact(-$G_\delta$) tripotent in $E^{**}$ and $\frac{e-x}{2} = u + P_0 (u) (\frac{e-x}{2})$ (see \cite[\S 4]{EdRu96}). Since the lattice $\mathcal{U}_{c} (E^{**})$ of those compact tripotents in $E^{**}$ is atomic (see \cite[Theorem 4.5$(ii)$]{EdRu96}), we can find at least a minimal element $v\in \hbox{min }\mathcal{U}_{c} (E^{**})=\hbox{min } \mathcal{U} (E^{**})$ satisfying $v\leq u$. Therefore, we can write $\displaystyle \frac{e-x}{2} = v + P_0 (v) (\frac{e-x}{2})$ in $E^{**}$, and hence $$\frac12 P_2 (v) (e) + \frac12 P_2 (v) (-x) =  P_2 (v) (\frac{e-x}{2}) = v.$$ Since $v$ is the unit of $E^{**}_2 (v) = \mathbb{C} v$, and hence it is an extreme point of the closed unit of the latter space, we deduce that $P_2 (v) (e) = P_2 (v) (-x) = v$. The minimality of $e$ in $E$ (and in $E^{**}$) shows that $e=v$ (compare \cite[Corollary 1.7]{FriRu85}). Having in mind that $x$ is a norm-one element in $E$, it follows from \cite[Lemma 1.6]{FriRu85} that $ x = -v + P_0 (v) (x) = -e + P_0 (e) (x) $ in $E$.
\end{proof}

Many important consequences can be now derived from the above strengthened version of \cite[Lemma 3.4]{PeTan16}. For example, when in the proof of \cite[Theorem 3.6]{PeTan16} we replace \cite[Lemma 3.4]{PeTan16} with Proposition \ref{p l 3.4 new}, the arguments remain valid, word-by-word, to prove the following:

\begin{theorem}\label{t Tingley antipodes for finite rank new} Let $f: S(E) \to S(B)$ be a surjective isometry between the unit spheres of two weakly compact JB$^*$-triples. Suppose $e$ is a finite rank tripotent in $E$. Then $f(-e)=-f(e)$. Furthermore, if $e_1,\ldots,e_m$ are mutually orthogonal minimal tripotents in $E$, then $f(e_1),\ldots,f(e_m)$ are mutually orthogonal minimal tripotents and $$ f(e_1+\ldots+e_m) = f(e_1)+\ldots+f(e_m).$$ Consequently, $f$ maps tripotents of rank $k$ to tripotents of rank $k$, $f$ is additive on mutually orthogonal finite rank tripotents, and the ranks of $E$ and $B$ coincide. $\hfill\Box$
\end{theorem}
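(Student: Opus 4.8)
The plan is to prove the statement in three main steps, carried out in the order: (1) $f$ preserves minimality of tripotents; (2) $f$ is additive and orthogonality-preserving on mutually orthogonal minimal tripotents; (3) the antipodal identity $f(-e)=-f(e)$, first for minimal and then for general finite rank tripotents; the consequences will then be read off immediately.

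The main obstacle is step (1). Fix a minimal tripotent $e$ in $E$; by Proposition~\ref{p surjective isometries between the spheres preserve finite rank tripotents}$(d)$ the image $u:=f(e)$ is a finite rank tripotent in $B$, and the task is to prove rank$(u)=1$. The idea is to analyse the antipodal set $M_e:=\{y\in S(E): \|y-e\|=2\}$. Proposition~\ref{p l 3.4 new} (applicable since $e$ is minimal) gives the inclusion $M_e\subseteq -e+\mathcal{B}_{E_0(e)}$, while \eqref{eq orthgonal implies M-orthogonal} gives the reverse inclusion and also shows that every point of $-e+\mathcal{B}_{E_0(e)}$ has norm one; hence $M_e=-e+\mathcal{B}_{E_0(e)}$ is a norm-closed face of $\mathcal{B}_E$ contained in $S(E)$, and in particular it is convex. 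Since $f$ is a surjective isometry, $f(M_e)=\{w\in S(B):\|w-u\|=2\}=:M_u$, and since $M_e$ is a norm-closed face, Proposition~\ref{p surjective isometries between the spheres preserve finite rank tripotents}$(b)$ shows that $f|_{M_e}$ is affine, so $M_u$ is convex. To conclude I would argue that convexity of $M_u$ forces minimality of $u$: if rank$(u)\geq 2$, write $u=u_1+\cdots+u_r$ as a sum of mutually orthogonal minimal tripotents and observe that $w_1:=-u_1+\sum_{i\geq 2}u_i$ and $w_2:=-w_1$ both lie in $M_u$ by \eqref{eq orthgonal implies M-orthogonal}, while their midpoint $0$ does not lie on $S(B)\supseteq M_u$, contradicting convexity.

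For step (2), let $e_1,\dots,e_m$ be mutually orthogonal minimal tripotents; by step (1) each $f(e_j)$ is minimal. Setting $e'=e_2+\cdots+e_m$, orthogonality gives $e_1\in S(E_0(e'))$, so Proposition~\ref{p surjective isometries between the spheres preserve finite rank tripotents}$(c)$ applied to $e'$ produces a real linear isometry $T_{e'}:E_0(e')\to B_0(f(e'))$ with $f(e'+x)=f(e')+T_{e'}(x)$ and $T_{e'}(x)=f(x)$ for $x\in S(E_0(e'))$; taking $x=e_1$ yields $f(e_1+e')=f(e')+f(e_1)$ together with $f(e_1)=T_{e'}(e_1)\in B_0(f(e'))$, that is, $f(e_1)\perp f(e')$. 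An induction on $m$ then gives $f(e_1+\cdots+e_m)=f(e_1)+\cdots+f(e_m)$, and the same argument applied pairwise to $e_i,e_j$ shows that $\{f(e_j)\}$ is a mutually orthogonal family of minimal tripotents.

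Step (3) now follows quickly, and the consequences with it. For a minimal tripotent $e$, both $u=f(e)$ and $u'=f(-e)$ are minimal by step (1), and $\|u-u'\|=\|e-(-e)\|=2$; Proposition~\ref{p l 3.4 new} gives $u'=-u+w$ with $w:=P_0(u)(u')\in B_0(u)$, and a direct computation with the Peirce rules \eqref{eq peirce rules2} shows that $w$ is a tripotent orthogonal to $u$. Since rank is additive on orthogonal tripotents, $\mathrm{rank}(u')=\mathrm{rank}(u)+\mathrm{rank}(w)=1+\mathrm{rank}(w)$, so minimality of $u'$ forces $w=0$ and $f(-e)=-f(e)$. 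For a general finite rank tripotent $e=e_1+\cdots+e_m$ with $e_i$ mutually orthogonal minimal, writing $-e=(-e_1)+\cdots+(-e_m)$ and combining step (2) with the minimal case gives $f(-e)=\sum_j f(-e_j)=-\sum_j f(e_j)=-f(e)$. Finally, a rank-$k$ tripotent decomposes as a sum of $k$ orthogonal minimal tripotents, so by step (2) its image is a sum of $k$ orthogonal minimal tripotents and hence has rank $k$; additivity on orthogonal finite rank tripotents follows by decomposing both summands into minimal pieces and applying step (2); and since $f$ and $f^{-1}$ both preserve the ranks of tripotents, the suprema of ranks of finite rank tripotents in $E$ and $B$ coincide, whence rank$(E)=$rank$(B)$.
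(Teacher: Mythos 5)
Your argument is correct and is, in substance, the proof the paper has in mind: the paper merely defers to the proof of \cite[Theorem 3.6]{PeTan16} with \cite[Lemma 3.4]{PeTan16} replaced by Proposition \ref{p l 3.4 new}, and that deferred argument runs along exactly your lines --- identify the antipodal set $\{y\in S(E):\|y-e\|=2\}$ with the norm-closed face $-e+\mathcal{B}_{E_0(e)}$ via Proposition \ref{p l 3.4 new} and \eqref{eq orthgonal implies M-orthogonal}, use Proposition \ref{p surjective isometries between the spheres preserve finite rank tripotents}$(b)$ to transport convexity and force minimality of $f(e)$, and then get orthogonal additivity and antipodes from the isometries $T_e$ of Proposition \ref{p surjective isometries between the spheres preserve finite rank tripotents}$(c)$. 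The only points you assert rather than prove (that $-e+\mathcal{B}_{E_0(e)}$ is a norm-closed face, and that finite rank tripotents in a weakly compact JB$^*$-triple decompose into orthogonal minimal ones) are standard facts already used freely in the paper, so they do not constitute gaps.
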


We can also remove now part of the hypothesis in \cite[Theorem 3.12]{PeTan16}.

\begin{theorem}\label{thm Tyngley co sums with more than one element new} Let $f: S(E) \to S(B)$ be a surjective isometry between the unit spheres of two weakly compact JB$^*$-triples. Suppose that $\displaystyle E=\oplus^{c_0}_{j\in J} K_j$, where $\sharp J \geq 2$, and every $K_j$ is an elementary JB$^*$-triple. Then there exists a surjective real linear isometry $T: E\to B$ satisfying $T|_{S(E)} = f$.
\end{theorem}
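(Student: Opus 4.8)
The plan is to reduce the case of a nontrivial $c_0$-sum to the elementary (factor) case by exploiting the orthogonal decomposition of $E$ together with the structural information already extracted from $f$. First I would use Theorem \ref{t Tingley antipodes for finite rank new}, which guarantees that $f$ preserves antipodes on finite rank tripotents, sends rank-$k$ tripotents to rank-$k$ tripotents, is additive on mutually orthogonal finite rank tripotents, and preserves the rank of the whole triple. Writing $E=\oplus^{c_0}_{j\in J} K_j$, each summand $K_j$ is a weak$^*$-closed ideal that is itself a Cartan factor (an elementary JB$^*$-triple), and I would look at how $f$ acts on the minimal tripotents living inside a fixed $K_j$. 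Since $f$ preserves orthogonality and minimality of tripotents (by Theorem \ref{t Tingley antipodes for finite rank new} together with Proposition \ref{p surjective isometries between the spheres preserve finite rank tripotents}), the family of minimal tripotents of $K_j$ should be carried into a single summand of a corresponding decomposition of $B$.

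The key steps, in order, would be as follows. First, establish that $B$ also decomposes as a $c_0$-sum $B=\oplus^{c_0}_{i\in I} \widetilde{K_i}$ of elementary JB$^*$-triples (this is the Bunce--Chu structure theorem for weakly compact JB$^*$-triples, \cite[Lemma 3.3 and Theorem 3.4]{BuChu}). Second, I would show that $f$ induces a bijection between the index sets $J$ and $I$: the point is that two minimal tripotents of $E$ lie in the same summand $K_j$ precisely when they are connected by a chain of nonorthogonal minimal tripotents (they are \emph{equivalent} or governed by collinearity inside an irreducible factor), and since $f$ preserves both orthogonality and nonorthogonality of minimal tripotents, it must respect this equivalence relation and hence map the minimal tripotents of each $K_j$ bijectively onto those of a unique summand $\widetilde{K_{\sigma(j)}}$. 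Third, with this matching in hand, for each $j$ the restriction of $f$ to $S(K_j)$ is a surjective isometry $S(K_j)\to S(\widetilde{K_{\sigma(j)}})$ between the unit spheres of two elementary JB$^*$-triples; applying the factor case (the results quoted from Section \ref{sec: sintesis}: Theorems \ref{thm Tingley thm for type 1 rank 2, 3 and 4}, \ref{thm Tingley thm for type 2 rank 2, 3 and 4}, \ref{thm Tingley thm for type 3 rank 2, 3 and 4}, \ref{thm Tingley thm for finite dimensional}, and \ref{thm Tingley thm for spin factors}, supplemented by the rank $\geq 5$ case from \cite{PeTan16}) yields a surjective real linear isometry $T_j : K_j \to \widetilde{K_{\sigma(j)}}$ extending $f|_{S(K_j)}$. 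Fourth, I would assemble the $T_j$ into a single map $T=\oplus_j T_j : E\to B$ and check it is a well-defined surjective real linear isometry.

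The main obstacle I expect lies in the gluing step: it is not automatic that the individual extensions $T_j$ patch together into a global map that agrees with $f$ on \emph{all} of $S(E)$, rather than merely on the disjoint pieces $\bigcup_j S(K_j)$. A generic element of $S(E)$ has norm-one components spread across several summands, and the isometry $f$ need not respect the individual norms of those components a priori. The crucial technical point is to show that $f$ is ``orthogonally additive'' in the sense that for $x=\sum_j x_j$ with $x_j\in K_j$ and $\|x\|=1=\max_j\|x_j\|$, one has $f(x)=\sum_j \|x_j\|\, f(x_j/\|x_j\|)$ placed in the matching summands; here the geometric $M$-orthogonality from \eqref{eq orthgonal implies M-orthogonal} and the affineness of $f$ on norm-closed faces (Proposition \ref{p surjective isometries between the spheres preserve finite rank tripotents}$(b)$ and $(c)$) are exactly the tools needed. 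Concretely, I would approximate $x$ by elements supported on finitely many summands, use a finite rank tripotent $e$ dominating the relevant support together with the face-affineness to control $f$ on the face $e+\mathcal{B}_{E_0^{**}(e)}\cap E$, and then pass to the limit using continuity of $f$. Once this orthogonal additivity is secured, the coincidence $T|_{S(E)}=f$ follows and the real linearity of $T$ is inherited summand-by-summand; the hypothesis $\sharp J\geq 2$ is what allows these orthogonality arguments to get started, since the presence of at least two summands forces the decomposition of $B$ to be nontrivial and rules out the delicate irreducible low-rank phenomena that are handled separately in Section \ref{sec: sintesis}.
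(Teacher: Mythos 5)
Your proposal takes the wrong route: it both inverts the logical architecture of the paper and leaves the genuinely hard step unproved. The paper's proof of this theorem is a one-line adaptation of \cite[Theorem 3.12]{PeTan16}: once \cite[Theorem 3.6]{PeTan16} is replaced by Theorem \ref{t Tingley antipodes for finite rank new}, the argument there goes through verbatim. That argument never solves Tingley's problem on the individual summands. The whole point of the hypothesis $\sharp J\geq 2$ is that for each $j$ one can choose a minimal tripotent $e$ in a \emph{different} summand $K_k$, so that $\oplus_{l\neq k}K_l\subseteq E_0(e)$, and Proposition \ref{p surjective isometries between the spheres preserve finite rank tripotents}$(c)$ already hands you a surjective real linear isometry $T_e:E_0(e)\to B_0(u)$ with $T_e=f$ on $S(E_0(e))$ --- real linearity on each summand comes for free, with no case-by-case analysis of Cartan factors. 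By contrast, your third step invokes Theorems \ref{thm Tingley thm for type 1 rank 2, 3 and 4}--\ref{thm Tingley thm for spin factors}, i.e.\ the solution of Tingley's problem for elementary factors of low rank. Those results occupy Sections \ref{sec: geometric properties} and \ref{sec: sintesis} and are proved \emph{after} this theorem; indeed the theorem you are proving is precisely what the paper uses to reduce the general weakly compact case to the factor case, so your route is not available at this point. (Your reading of the hypothesis $\sharp J\geq2$ as ``ruling out the delicate low-rank phenomena'' is also mistaken: the summands may perfectly well be rank-two spin factors; the hypothesis is used only to place each summand inside a Peirce-zero subspace.)

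The second problem is that the step you yourself flag as the main obstacle --- the orthogonal additivity $f(\sum_j x_j)=\sum_j T_j(x_j)$ for a general norm-one $x$ with components spread over several summands --- is exactly where the content of \cite[Theorem 3.12]{PeTan16} lies, and your sketch does not close it. Proposition \ref{p surjective isometries between the spheres preserve finite rank tripotents}$(b)$ controls $f$ only on faces of the form $(e+\mathcal{B}_{E_0^{**}(e)})\cap E$ with $e$ a tripotent, so it applies to $x$ only when some component of $x$ is itself a norm-one finite rank tripotent; a generic component $x_j$ is merely a norm limit of combinations $\sum_i\lambda_i v_i$ of mutually orthogonal minimal tripotents with all $\lambda_i<1$, and there is no finite rank tripotent ``dominating the relevant support'' in the sense your argument needs. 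Making the approximation-and-limit argument work requires the bookkeeping carried out in \cite{PeTan16}, which is what the paper's proof imports wholesale. As written, your proposal replaces a short, self-contained reduction with an appeal to the hardest theorems of the paper plus an unproved gluing lemma.
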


\begin{proof} If we replace \cite[Theorem 3.6]{PeTan16} with Theorem \ref{t Tingley antipodes for finite rank new} above, the arguments in the proof of \cite[Theorem 3.12]{PeTan16} also hold in this setting.
\end{proof}

Let $f: S(E) \to S(B)$ be a surjective isometry between the unit spheres of two weakly compact JB$^*$-triples. Suppose that $\displaystyle E=\oplus^{c_0}_{j\in \mathcal{J}} K_j$ and $\displaystyle B=\oplus^{c_0}_{k\in \mathcal{K}} K^{\prime}_k$, where the $K_j$ and the $K^{\prime}_k$ are elementary JB$^*$-triples. If $\sharp \mathcal{J} \geq 2$ or $\sharp \mathcal{K} \geq 2$ we deduce from Theorem \ref{thm Tyngley co sums with more than one element new} above that $f$ can be extended to a real linear (bijective) isometry $T : E \to F$. We can therefore restrict our attention to the case of a surjective isometry between the unit spheres of two elementary JB$^*$-triples.\smallskip

Let $f: S(K) \to S(K^{\prime})$ be a surjective isometry between the unit spheres of two  elementary JB$^*$-triples. Theorem \ref{t Tingley antipodes for finite rank new} implies that $K$ and $K^{\prime}$ both have the same rank. If rank$(K)= 1$  or rank$(K)\geq 5$, Corollary 3.15 and Theorem 3.13 in \cite{PeTan16} prove the existence of a surjective real linear isometry $T : K\to K^{\prime}$ which coincides with $f$ on the unit sphere of $K$. We can actually restrict our attention to the cases in which $2\leq$rank$(K)\leq 4$. Theorem \ref{thm Tingley thm ofr weakly compact JB*-triples} below assures (even under the weaker hypothesis $2\leq$rank$(K)$) the existence of a surjective complex linear or conjugate linear isometry $T: K\to K'$ satisfying $T(x) = f(x)$ for every $x\in S(K)$. These arguments offer a complete answer to Tingley's problem for weakly compact JB$^*$-triples.

\begin{theorem}\label{thm Tyngley general weakly compact JB*-triples} Let $f: S(E) \to S(B)$ be a surjective isometry between the unit spheres of two weakly compact JB$^*$-triples. Then there exists a surjective real linear isometry $T: E\to B$ satisfying $T|_{S(E)} = f$.$\hfill\Box$
\end{theorem}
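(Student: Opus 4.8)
The plan is to reduce the statement, via the $c_0$-decomposition of weakly compact JB$^*$-triples into elementary ones, to the case of a single elementary JB$^*$-triple, and then to dispatch the possible ranks one by one. Using \cite[Lemma 3.3 and Theorem 3.4]{BuChu} I would first write $E=\oplus^{c_0}_{j\in \mathcal{J}} K_j$ and $B=\oplus^{c_0}_{k\in \mathcal{K}} K'_k$ with each $K_j$ and each $K'_k$ an elementary JB$^*$-triple. If $\sharp\mathcal{J}\geq 2$ or $\sharp\mathcal{K}\geq 2$, then applying Theorem \ref{thm Tyngley co sums with more than one element new} to $f$ or to its inverse $f^{-1}$ (which is again a surjective isometry between the corresponding spheres) already yields a surjective real linear isometry extending $f$, and that case is finished. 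I am thereby reduced to a surjective isometry $f:S(K)\to S(K')$ between the unit spheres of two elementary JB$^*$-triples $K$ and $K'$.

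For this reduced problem the first thing I would extract is that the two triples share the same rank: by Theorem \ref{t Tingley antipodes for finite rank new}, $f$ carries tripotents of rank $k$ to tripotents of rank $k$ and preserves the rank of the ambient triple, so $\mathrm{rank}(K)=\mathrm{rank}(K')$. The extreme ranks are then already settled by earlier work. If $\mathrm{rank}(K)=1$, then $K$ and $K'$ are Hilbert spaces and the extension is provided by \cite[Corollary 3.15]{PeTan16}; if $\mathrm{rank}(K)\geq 5$, the extension is furnished by \cite[Theorem 3.13]{PeTan16}. What remains, and what constitutes the genuine new content, are the intermediate ranks $2$, $3$ and $4$.

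The hard part is precisely these intermediate ranks, where no general rigidity was previously available. Here I would split the argument according to the type of the elementary JB$^*$-triple $K$ --- type $1$ (the spaces $K(H,H')$), types $2$ and $3$, spin factors, and finite dimensional Cartan factors --- and treat each case by the corresponding result of Section \ref{sec: sintesis}, namely Theorems \ref{thm Tingley thm for type 1 rank 2, 3 and 4}, \ref{thm Tingley thm for type 2 rank 2, 3 and 4}, \ref{thm Tingley thm for type 3 rank 2, 3 and 4}, \ref{thm Tingley thm for spin factors} and \ref{thm Tingley thm for finite dimensional}; these are assembled into Theorem \ref{thm Tingley thm ofr weakly compact JB*-triples}, which produces a surjective complex linear or conjugate linear isometry $T:K\to K'$ agreeing with $f$ on $S(K)$. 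The difficulty I expect in each type is to recover enough linear structure from purely metric data: one has to exploit that $f$ preserves minimal tripotents and orthogonality (Theorem \ref{t Tingley antipodes for finite rank new}), that it is affine on the norm-closed faces of $\mathcal{B}_K$ (Proposition \ref{p surjective isometries between the spheres preserve finite rank tripotents}, parts $(b)$ and $(c)$), and the sharpened antipodal description of Proposition \ref{p l 3.4 new}, and then to promote these local, face-by-face linearities to a single global real linear isometry. Once such a linear isometry $T$ has been obtained in the elementary case and the $c_0$-sum reduction is in place, collecting the two cases completes the proof of Theorem \ref{thm Tyngley general weakly compact JB*-triples}.
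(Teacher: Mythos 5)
Your proposal is correct and follows essentially the same route as the paper: the $c_0$-sum reduction via Theorem \ref{thm Tyngley co sums with more than one element new}, the rank-preservation from Theorem \ref{t Tingley antipodes for finite rank new}, the appeal to \cite[Corollary 3.15 and Theorem 3.13]{PeTan16} for ranks $1$ and $\geq 5$, and the type-by-type analysis assembled in Theorem \ref{thm Tingley thm ofr weakly compact JB*-triples} for the remaining ranks. Your explicit remark that one may need to apply the $c_0$-sum theorem to $f^{-1}$ when only the codomain has at least two summands is a small but welcome clarification of a point the paper leaves implicit.
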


\section{Geometric properties of the subtriple generated by two minimal tripotents}\label{sec: geometric properties}

Our first result can be easily derived from the \emph{Triple System Analyzer} \cite[Proposition 2.1]{DanFri87}.

\begin{lemma}\label{l collinearity}\cite[Proposition 2.1 $(i)$]{DanFri87} Let $e_1$ and $e_2$ be tripotents in a JB$^*$-triple $E$. The following statements hold:\begin{enumerate}[$(a)$] \item If $e_1 \top e_2$ then $e_1$ is minimal if and only if $e_2$ is;
\item If $e_1$ and $e_2$ are minimal and $e_2\in E_1 (e_1)$ then $e_1 \top e_2$. $\hfill\Box$
\end{enumerate}
\end{lemma}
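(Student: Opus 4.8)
The plan is to read off both statements directly from the Peirce relations and the classification information packaged in the cited \emph{Triple System Analyzer} (Proposition 2.1 of \cite{DanFri87}), rather than to reprove that structure theorem. Let me first record what collinearity gives us. If $e_1\top e_2$, then by definition $e_2\in E_1(e_1)$ and $e_1\in E_1(e_2)$. The subtriple generated by two such tripotents is governed by the Peirce calculus with respect to $e_1$: the element $v:=\J{e_1}{e_2}{e_1}$ lies, by the Peirce multiplication rule \eqref{eq peirce rules2}, in $E_{1-1+1}(e_1)=E_1(e_1)$, and similar bracket computations locate the finitely many generating elements in fixed Peirce spaces. This is precisely the computational content of the analyzer, and I would invoke it to say that the span of $e_1,e_2$ and their triple products is a low-dimensional subtriple whose isomorphism type is one of the explicitly listed models.

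For part $(a)$, I would argue that minimality is symmetric under $\top$ by transporting the Peirce-$2$ space of one tripotent onto that of the other. The key mechanism is the operator $Q:=L(e_2,e_1)$ (or an appropriate composition of such $L(\cdot,\cdot)$ maps), which by the main identity (JB$^*$1) and the Peirce rules carries $E_2(e_1)$ into $E_2(e_2)$ when $e_1\top e_2$; collinearity makes this map invertible between the two Peirce-$2$ spaces, with inverse built from $L(e_1,e_2)$. Since $E_2(e_1)=\mathbb{C}e_1$ forces the image to be one-dimensional, one gets $E_2(e_2)=\mathbb{C}e_2$, i.e. $e_2$ is minimal; the converse is symmetric. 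Concretely the candidate isomorphism sends $e_1\mapsto \J{e_2}{e_1}{e_1}=\tfrac12 e_2$ (using $e_1\in E_1(e_2)$), and one checks it is a triple isomorphism on the rank-one factor. The analyzer does this bookkeeping once and for all, so in practice I would simply cite it, but the honest underlying reason is this Peirce-$2$ transport argument.

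For part $(b)$, the situation is a converse: we are \emph{given} two minimal tripotents with $e_2\in E_1(e_1)$ and must upgrade this to full collinearity, namely we must also produce $e_1\in E_1(e_2)$. Write the Peirce decomposition $e_1=P_2(e_2)(e_1)+P_1(e_2)(e_1)+P_0(e_2)(e_1)$ and aim to kill the Peirce-$2$ and Peirce-$0$ components. Minimality of $e_2$ gives $E_2(e_2)=\mathbb{C}e_2$, so $P_2(e_2)(e_1)=\lambda e_2$ for a scalar $\lambda$; the hypothesis $e_2\in E_1(e_1)$ means $e_2\perp P_2(e_1)$-type relations hold, and applying $P_j(e_1)$ together with the compatibility of Peirce projections \eqref{eq compatible tripotents} (valid since $e_2\in E_1(e_1)$) forces $\lambda=0$. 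The Peirce-$0$ component is eliminated by a norm/orthogonality argument: if $e_1$ had a nonzero $E_0(e_2)$ part it would, via \eqref{eq orthgonal implies M-orthogonal} and the fact that $e_1$ is a minimal (hence rank-one) tripotent of norm one, contradict $\|e_1\|=1$ and the single-tripotent structure of $E_2(e_1)=\mathbb{C}e_1$. What remains is $e_1\in E_1(e_2)$, which is exactly collinearity.

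The main obstacle is part $(b)$: ruling out the Peirce-$0$ component of $e_1$ relative to $e_2$ is the delicate step, since $e_2\in E_1(e_1)$ by itself does not immediately symmetrize into a statement about $e_1$'s position relative to $e_2$ without using minimality of \emph{both} tripotents in an essential way. The cleanest route is the Triple System Analyzer, which classifies the subtriple generated by $e_1,e_2$ and shows that the only configuration consistent with both being minimal and $e_2\in E_1(e_1)$ is the collinear one (the alternatives, such as $e_1,e_2$ being orthogonal or governed by a larger factor, are incompatible with $e_2$ sitting in the Peirce-$1$ space of a minimal $e_1$). Hence my proposed proof is: invoke \cite[Proposition 2.1$(i)$]{DanFri87} for both parts, and, if a self-contained argument is wanted, supply the Peirce-$2$ transport for $(a)$ and the Peirce-component elimination sketched above for $(b)$.
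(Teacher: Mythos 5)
Your proposal matches the paper exactly: the paper offers no proof of this lemma beyond the citation to \cite[Proposition 2.1 $(i)$]{DanFri87}, which is precisely the route you ultimately take for both parts. The supplementary Peirce-calculus sketches you add are not fully watertight as written (the surjectivity of the transport map $L(e_2,e_1):E_2(e_1)\to E_2(e_2)$ in $(a)$ is asserted rather than proved, and the norm argument killing the $E_0(e_2)$-component of $e_1$ in $(b)$ does not actually yield a contradiction as stated), but since you explicitly fall back on the citation, your proof coincides with the paper's.
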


Our next result is also essentially contained in \cite{DanFri87}. We include an explicit statement with a justification for completeness.

\begin{lemma}\label{l technical headache} Let $C$ be a Cartan factor of rank greater or equal than 2. Let $e_1$ and $e_2$ be minimal tripotent in $C$ with $e_1\top e_2$. Then there exists a minimal tripotent $u$ in $C$ such that $e_1\perp u$, and $u \top e_2$.
\end{lemma}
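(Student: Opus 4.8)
The plan is to locate the desired tripotent inside the Peirce-zero space $C_0(e_1)$. Every $u\in C_0(e_1)$ automatically satisfies $e_1\perp u$, since $C_0(e_1)$ consists exactly of the elements $x$ with $\J{e_1}{e_1}{x}=0$; moreover, because $u\perp e_1$ forces $C_2(u)\subseteq C_0(e_1)$ (by the Peirce rules \eqref{eq peirce rules1}--\eqref{eq peirce rules2}), a minimal tripotent of the subtriple $C_0(e_1)$ is again minimal in $C$. Thus the whole problem reduces to producing a minimal tripotent $u\in C_0(e_1)$ that is collinear with $e_2$; equivalently, to completing the collinear pair $e_1\top e_2$ to a $2\times 2$ quadrangle of minimal tripotents, the sought $u$ being the vertex opposite to $e_1$. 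By Lemma \ref{l collinearity} both $e_1$ and $e_2$ remain minimal throughout, and the rank hypothesis is precisely what should guarantee that the quadrangle has room to close.

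Next I would reduce to concrete models through the classification of Cartan factors into six types. The model case is type $1$, $C=L(H,H')$, where a minimal tripotent is a rank-one partial isometry $h\mapsto\langle h,\xi\rangle\,\eta$ attached to unit vectors $\xi\in H$, $\eta\in H'$. A direct computation with the Peirce projections of such a partial isometry shows that two minimal tripotents are collinear precisely when they share, up to a phase, either their initial or their final vector while the complementary vectors are orthogonal; writing $\xi,\eta$ for the data of $e_1$, one has $C_0(e_1)=L(\xi^\perp,\eta^\perp)$. In the first configuration $e_1=(\xi\mapsto\eta_1)$, $e_2=(\xi\mapsto\eta_2)$ with $\eta_1\perp\eta_2$; since $\mathrm{rank}(C)=\min\{\dim H,\dim H'\}\geq 2$ I may choose a unit vector $\zeta\perp\xi$ in $H$ and set $u=(\zeta\mapsto\eta_2)$, which lies in $C_0(e_1)$ and is collinear with $e_2$. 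The symmetric configuration is handled by rotating the final vector instead, again available because $\dim H'\geq 2$.

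For the remaining types I would run the same recipe. Types $2$ and $3$ sit inside $L(H)$ as the eigenspaces of the involution $x\mapsto jx^*j$, and their minimal tripotents and collinearity relation are governed by the symplectic and hermitian grids; the move ``rotate one of the two defining vectors into an orthogonal direction'' again produces $u$, the extra direction being supplied by $\mathrm{rank}(C)\geq 2$. For a spin factor (type $4$) the computation with the product $\J xyz=(x|y)z+(z|y)x-(x|\overline z)\overline y$ shows that two minimal tripotents are collinear exactly when $(e_1|e_2)=0$ and $(e_2|\overline{e_1})=0$; as the only minimal tripotent orthogonal to $e_1$ is, up to a phase, $\overline{e_1}$, one checks directly from these identities that $\overline{e_1}\top e_2$, so $u=\overline{e_1}$ works. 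The exceptional factors $C_5$ (rank $2$) and $C_6$ (rank $3$) are finite dimensional, and the finitely many mutual positions of their minimal tripotents are catalogued by the Triple System Analyzer of \cite{DanFri87}, from which the closing vertex is read off.

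The main obstacle is conceptual rather than computational: the construction genuinely collapses in rank $1$, so each case must be made to hinge on $\mathrm{rank}(C)\geq 2$ exactly at the point where an additional orthogonal direction (the closing vertex of the quadrangle) is extracted, and in every model one must verify that the element produced is a \emph{minimal} tripotent, is triple-orthogonal to $e_1$, and is \emph{collinear} with $e_2$ rather than merely non-orthogonal to it. To avoid the six-fold case split, the route I would ultimately take for the written proof is to invoke Proposition 2.1 of \cite{DanFri87} to identify the subtriple generated by $e_1$ and $e_2$ together with its standard position inside $C$, and then to adjoin the orthogonal direction furnished by the rank assumption.
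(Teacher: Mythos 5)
Your reduction is sound as far as it goes: every $u\in C_0(e_1)$ is orthogonal to $e_1$, the inclusion $C_2(u)\subseteq C_0(e_1)$ does make minimality in the subtriple $C_0(e_1)$ equivalent to minimality in $C$, and your treatments of type 1 and of the spin factor (where $u=\overline{e_1}$ works) are correct and complete. But as a proof of the lemma the proposal has genuine gaps. For types 2 and 3 you only assert a description of collinearity ``governed by the grids''; in type 3 this description does not exist: all minimal tripotents of $\{x\in K(H):x^t=x\}$ are of the form $\lambda\, j(\xi)\otimes \xi$, and a direct computation (or the Friedman--Russo list \eqref{eq subtriples generated by two min trip}: two minimal tripotents of a hermitian factor generate $\mathbb{C}$, $\mathbb{C}\oplus^{\infty}\mathbb{C}$ or $S_2(\mathbb{C})$, never $M_{1,2}(\mathbb{C})$) shows that no two of them are collinear, so the case is vacuous and your ``rotate a defining vector'' recipe has nothing to act on --- a sign the sketch was not checked against the model. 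More seriously, types 5 and 6 are not handled at all: \cite[Proposition 2.1]{DanFri87} is a general structure theorem about a tripotent lying in the Peirce-1 space of a minimal tripotent, valid in any JB$^*$-triple; it is not a catalogue of minimal-tripotent configurations in $C_5$ or $C_6$ from which a ``closing vertex'' can be read off. Finally, your unified fallback (``adjoin the orthogonal direction furnished by the rank assumption'') misses the actual difficulty: rank$(C)\geq 2$ may give you a minimal tripotent in $C_0(e_1)$, but nothing forces it to be collinear with $e_2$ --- in $K(H,H')$ with $\dim H'\geq 3$, take $e_1=\eta_1\otimes\xi_1$, $e_2=\eta_2\otimes\xi_1$; then $\eta_3\otimes\xi_2\in C_0(e_1)$ is orthogonal, not collinear, to $e_2$. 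Producing the \emph{right} element is exactly the content of the lemma.

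The paper's proof is uniform over all Cartan factors and supplies precisely the ingredients you are missing, by working inside $C_1(e_2)$ rather than $C_0(e_1)$, so that collinearity with $e_2$ (rather than minimality in $C$) becomes the automatic half. By \cite[Corollary 2.2]{DanFri87}, $C_1(e_2)$ has rank one or two; rank one is excluded because $e_1$ is a minimal tripotent of $C_1(e_2)$ collinear with $e_2$, and then the PROPOSITION on page 305 of \cite{DanFri87} would force $C$ to be a Hilbert space, hence of rank one. So $C_1(e_2)$ has rank two and contains a tripotent $u\perp e_1$ which is minimal in $C_1(e_2)$. The step that in your setup came for free --- minimality of $u$ in $C$ --- is where \cite[Proposition 2.1 $(iii)$]{DanFri87} is really used: $e_1+u$ is a non-minimal tripotent of $C_1(e_2)$, hence splits as a sum of two minimal tripotents of $C$, so it has rank two in $C$ and $u$ is minimal in $C$; Lemma \ref{l collinearity}$(b)$ then yields $u\top e_2$. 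To salvage your route you would have to prove honest collinearity/orthogonality descriptions in types 2, 3, 5 and 6 (recognizing the degeneracy of type 3), which is considerably more work than this Peirce-1 argument.
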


\begin{proof} By \cite[Corollary 2.2]{DanFri87} the JBW$^*$-triple $C_1(e_2)$ has rank one or two.\smallskip

Suppose first that $C_1(e_2)$ has rank one. Since $C_1(e_2)\ni e_1\top e_2$, we deduce from \cite[PROPOSITION in page 305]{DanFri87} that $C$ is isometric to a Hilbert space, and hence $C$ must have rank one, which is impossible. Therefore $C_1(e_2)$ has rank two, and we can thus find a tripotent $u$ in $C_1(e_2)$ such that $u\perp e_1$ and $u$ is minimal in $C_1(e_2)$. \smallskip

Clearly, $e_1+u$ is a tripotent in $C_1(e_2)$ which is not minimal in the latter JB$^*$-triple. By the \emph{Triple System Analyzer} \cite[Proposition 2.1 $(iii)$]{DanFri87}, there exist two minimal tripotents $u_1$, $u_2$ in $C$ satisfying $e_1+u = u_1+u_2$, in other words, $e_1+u$ is a rank two tripotent in $C$, and hence $u$ must be minimal in $C$ too.
\end{proof}

For later purposes, we shall make use of another decomposition which is also associated with a tripotent $u$ in a JB$^*$-triple $E$.  Since the mapping $Q(u)|_{E_2(u)}$ is the involution of the JB$^*$-algebra $E_2(u)$, it follows that $E_{2} (u) = E^{1} (u) \oplus E^{-1} (u)$, where $E^{k} (u) := \{ x\in E : Q(u) (x) := \{u,x,u\}= k x \}$. Clearly, $E^{-1} (u) = i E^{1} (u)$. By Peirce rules, $E_{0} (u) \oplus E_{1} (u)$ coincides with the kernel $\ker(Q(u)) = E^{0} (u)$. The identity $$ \{ {E^{i} (u)},{E^{j} (u)},{E^{k} (u)}\} \subseteq E^{i j k} (u),$$ holds whenever  $i j k \ne 0$. The spaces $E^{k}(u)$ induce the following decomposition $$E= E^{-1} (u) \oplus E^{0} (u) \oplus E^{1} (u).$$

Let $u$ and $v$ be a couple of arbitrary minimal tripotents in a JB$^*$-triple $E$. The JB$^*$-subtriple $J$ of $E$ generated by $u$ and $v$ was totally described by Y. Friedman and B. Russo \cite[Lemma 2.3 and Proposition 5]{FriRu85}. It follows from the just quoted results that $J$ is linearly spanned by $u$, $v$, $P_1 (u) (v)$, and $P_1 (v) (u)$. It is further known that $J$ is isometrically (triple-)isomorphic to one in the following list: \begin{equation}\label{eq subtriples generated by two min trip} \mathbb{C}, \mathbb{C}\oplus^{\infty} \mathbb{C}, M_{1,2} (\mathbb{C}), M_{2} (\mathbb{C})\hbox{ and } S_2 (\mathbb{C}),
 \end{equation}where $M_{k,n}(\mathbb{C})$ is the C$^*$-algebra of all $k\times n$ matrices with complex entries, and $S_2 (\mathbb{C})$ denotes the symmetric $2\times 2$ complex matrices.\smallskip

Given a minimal tripotent $u$ in a JB$^*$-triple $E$, since $E_2 (u) = \mathbb{C} u$, there exists a unique norm-one functional $\phi_u : E \to \mathbb{C} u$ satisfying $P_2 (u) (x)= \phi_{u} (x) u$,  for each $x$ in $E$. Clearly, $P^{1} (u) (x) = \Re\hbox{e}\phi_u (x) u$ for all $x$ in $E$. By an slight abuse of notation, when no confusion arises, we shall also write $P^{1} (u) (x)$ for the real number $\phi_{u} (x)$.\smallskip

In our next result we shall establish a formula to compute the distance between two minimal tripotents in a JB$^*$-triple.

\begin{proposition}\label{p distance between minimal tripotents} Let $u$ and $v$ be minimal tripotents in a JB$^*$-triple $E$. Then the following formula holds \begin{equation}\label{eq Fla distance between minimal trip} \|u-v\|^2 =  {(1 - \Re\hbox{e} \phi_u (v))+ \sqrt{(1 -   \Re\hbox{e} \phi_u (v))^2 -  \|P_0(u)(v)\|^2}}
\end{equation} $$ =  {(1 - P^{1} (u) (v))+ \sqrt{(1 -   P^{1} (u) (v))^2 -  \|P_0(u)(v)\|^2}}.$$
\end{proposition}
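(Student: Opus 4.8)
The plan is to localize the whole computation inside the JB$^*$-subtriple $J$ generated by $u$ and $v$, and then to read off the answer from the explicit description of such subtriples recalled just before the statement. First I would note that the three ingredients of \eqref{eq Fla distance between minimal trip} are intrinsic to $J$: the norm because $J$ is isometrically embedded in $E$; the scalar $\phi_u(v)$ and the vector $P_0(u)(v)$ because each $P_k(u)$ is a fixed polynomial in $L(u,u)$, and since $u\in J$ and $J$ is a subtriple one has $\{u,u,x\}\in J$ for $x\in J$, so $P_k(u)$ restricts to the corresponding Peirce projection of $J$ and carries $v$ back into $J$. Thus it suffices to verify the identity inside $J$, which by \eqref{eq subtriples generated by two min trip} is isometrically triple-isomorphic to one of $\mathbb{C}$, $\mathbb{C}\oplus^{\infty}\mathbb{C}$, $M_{1,2}(\mathbb{C})$, $M_2(\mathbb{C})$ or $S_2(\mathbb{C})$, with $u,v$ minimal tripotents generating it.

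I would then separate two regimes according to the rank of $J$. When $J$ has rank one (the cases $\mathbb{C}$ and $M_{1,2}(\mathbb{C})$), $J$ is a finite dimensional Hilbert space with triple product $\{a,b,c\}=\frac12((a|b)c+(c|b)a)$; here $E_0(u)=0$, so $P_0(u)(v)=0$, while $P_2(u)(x)=(x|u)u$ gives $\phi_u(v)=(v|u)$. Since $\|u-v\|^2=2-2\Re\hbox{e}(u|v)$, both sides of \eqref{eq Fla distance between minimal trip} collapse to $2\,(1-\Re\hbox{e}\,\phi_u(v))$ and the formula holds.

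The substantial case is $J$ of rank two, i.e.\ $J\in\{\mathbb{C}\oplus^{\infty}\mathbb{C},\,S_2(\mathbb{C}),\,M_2(\mathbb{C})\}$, where I would use that each of these is (triple-isomorphic to) a spin factor, so that $J$ carries an inner product $(\cdot|\cdot)$ and a conjugation $x\mapsto\overline{x}$ with $\{x,y,z\}=(x|y)z+(z|y)x-(x|\overline{z})\overline{y}$ and $\|x\|^2=(x|x)+\sqrt{(x|x)^2-|(x|\overline{x})|^2}$. A minimal tripotent $w$ then satisfies $(w|w)=\tfrac12$ and $(w|\overline{w})=0$, the latter forcing $\overline{u}\in E_0(u)$, i.e.\ $L(u,u)\overline{u}=0$. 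With these normalizations the plan is to compute directly from the spin product the two quantities
\[ \phi_u(v)=2(v|u), \qquad P_0(u)(v)=2(u|\overline{v})\,\overline{u}, \]
the first via $P_2(u)=Q(u)^2$ together with $Q(u)(x)=2(u|x)u$ and $Q(u)(u)=u$, the second by expanding $P_0(u)=(\mathrm{id}-L(u,u))(\mathrm{id}-2L(u,u))$ on $v$ and using $L(u,u)\overline{u}=0$. I would then evaluate $\|u-v\|^2$ through the spin norm, with $(u-v|u-v)=1-2\Re\hbox{e}(u|v)$ and $(u-v|\overline{u-v})=-2(u|\overline{v})$; since $\Re\hbox{e}\,\phi_u(v)=2\Re\hbox{e}(u|v)$ and $\|P_0(u)(v)\|^2=4|(u|\overline{v})|^2$, substituting turns \eqref{eq Fla distance between minimal trip} into a term-by-term equality.

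I expect the main difficulty to be bookkeeping rather than conceptual: keeping the conjugation terms of the spin product under control so that the constants in $Q(u)(x)=2(u|x)u$ and $P_0(u)(v)=2(u|\overline{v})\overline{u}$ come out correctly, and recording the auxiliary identity $(v|\overline{u})=(u|\overline{v})$ (from $(\overline{a}|\overline{b})=(b|a)$) needed to simplify $(u-v|\overline{u-v})$. A secondary point to make explicit is the identification of $\mathbb{C}\oplus^{\infty}\mathbb{C}$, $S_2(\mathbb{C})$ and $M_2(\mathbb{C})$ as spin factors of dimensions $2$, $3$ and $4$; this is routine from the norm formula, since for $M_2(\mathbb{C})$ one has $(x|x)=\tfrac12(s_1^2+s_2^2)$ and $|(x|\overline{x})|=s_1 s_2$ in terms of the singular values $s_1\ge s_2$, whence the right-hand side of the spin norm equals $s_1^2=\|x\|^2$.
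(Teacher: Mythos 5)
Your proposal is correct, and for the decisive cases it follows a genuinely different route from the paper. Both arguments begin identically: localize to the subtriple $J$ generated by $u$ and $v$ (your explicit justification that $P_k(u)|_J$ is the Peirce projection of $J$ is a point the paper leaves tacit) and run through the list \eqref{eq subtriples generated by two min trip}; the rank-one cases $\mathbb{C}$ and $M_{1,2}(\mathbb{C})$ are handled the same way in both. The divergence is in the rank-two cases. The paper treats $\mathbb{C}\oplus^{\infty}\mathbb{C}$ separately via $M$-orthogonality and then, for $M_2(\mathbb{C})$ and $S_2(\mathbb{C})$, writes $v=\eta\otimes\xi$, computes the characteristic polynomial of $(v-u)(v-u)^*$, and uses the constraints $|\alpha|^2+|\beta|^2+|\gamma|^2+|\delta|^2=1$ and $\alpha\delta=\beta\gamma$ to reduce it to $\lambda^2-2(1-\Re\hbox{e}\,\alpha)\lambda+|\delta|^2$; the formula is then the largest root. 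You instead identify all three rank-two subtriples as spin factors and compute $\phi_u(v)=2(v|u)$, $P_0(u)(v)=2(u|\overline{v})\overline{u}$ and $\|u-v\|^2$ directly from the spin product and spin norm; I have checked these identities (including $L(u,u)\overline{u}=0$, $(w|w)=\tfrac12$, $(w|\overline{w})=0$ for minimal $w$, and $(v|\overline{u})=(u|\overline{v})$) and they are right, so the substitution does give \eqref{eq Fla distance between minimal trip} term by term. Your approach buys uniformity — one computation covers $\mathbb{C}\oplus^{\infty}\mathbb{C}$, $S_2(\mathbb{C})$ and $M_2(\mathbb{C})$ at once — and it makes transparent why the right-hand side has exactly the shape of the spin norm; the cost is the preliminary identification of these algebras as spin factors with matching triple products (routine, and in fact the same device the paper uses later in Lemma \ref{l new Spin}). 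The paper's eigenvalue computation is more elementary and self-contained but requires the separate algebraic simplification under the rank-one constraint $\alpha\delta=\beta\gamma$. Either way the conclusion is the same.
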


\begin{proof} Let $J$ denote the JB$^*$-subtriple generated by $u$ and $v$. Suppose $J =\mathbb{C}\oplus^{\infty} \mathbb{C}$. The minimality of $u$ and $v$ implies that $u\perp v$ and hence the conclusion in \eqref{eq Fla distance between minimal trip} is a consequence of \cite[Lemma 1.3 $(a)$]{FriRu85}.\smallskip

If $J= \mathbb{C}$ the statement is clear.\smallskip

We assume now that $J =M_{1,2} (\mathbb{C})$. There is no loss of generality in assuming that $u=(1,0)$ and $v=(\lambda_1,\lambda_2)$ with $|\lambda_1|^2 + |\lambda_2|^2 =1$. Therefore, $$\|u-v\|^2 = \| (1-\lambda_1 ,-\lambda_2)\|^2 = |1-\lambda_1|^2 + |\lambda_2|^2 =2 -2 \Re\hbox{e} (\lambda_1) = 2 (1 -  \Re\hbox{e} \phi_u (v)),$$ which proves \eqref{eq Fla distance between minimal trip} because $P_0(u) (v) =0$.\smallskip

We deal now with the case $J=M_{2} (\mathbb{C})$ with the spectral norm. We may assume that $u = \left(
                                                                              \begin{array}{cc}
                                                                                1 & 0 \\
                                                                                0 & 0 \\
                                                                              \end{array}
                                                                            \right)$
and $v = \left(
                                                                              \begin{array}{cc}
                                                                                \overline{\xi_1} \eta_1 & \overline{\xi_2} \eta_1 \\
                                                                                \overline{\xi_1} \eta_2 & \overline{\xi_2} \eta_2 \\
                                                                              \end{array}
                                                                            \right) =(\eta_1,\eta_2)\otimes (\xi_1,\xi_2),$
with $\xi_j,\eta_k\in \mathbb{C}$ satisfying $|\xi_1|^2 + |\xi_2|^2 =1$ and $|\eta_1|^2 + |\eta_2|^2 =1$.  To simplify the notation we write $v= \left(                                                                                        \begin{array}{cc}
 \alpha & \beta \\                                                                                          \gamma & \delta \\                                                                                \end{array}                                                                              \right)$, with $\alpha {\delta} = \beta {\gamma}$ and $|\alpha|^2 + |\delta|^2  + |\beta|^2 + | \gamma|^2 =1.$ According to this terminology, $v-u = \left(                                                                                        \begin{array}{cc}
 \alpha -1 & \beta \\                                                                                          \gamma & \delta \\                                                                                \end{array}                                                                              \right).$ By the Gelfand-Naimark axiom we have $$\|v-u\|^2 = \| (v-u)(v-u)^*\| = \left\| \left(                                                                                        \begin{array}{cc}
 \alpha -1 & \beta \\                                                                                          \gamma & \delta \\                                                                                \end{array}                                                                              \right) \left(                                                                                        \begin{array}{cc}
 \alpha -1 & \beta \\                                                                                          \gamma & \delta \\                                                                                \end{array}                                                                              \right)^* \right\| .$$ Since $(v-u)(v-u)^* = \left(
                                                                           \begin{array}{cc}
                                                                             |\alpha-1|^2 +|\beta|^2 & (\alpha -1) \overline{\gamma} + \beta\overline{\delta} \\
                                                                             (\overline{\alpha} -1) {\gamma} + \overline{\beta}{\delta} & |\gamma|^2 + |\delta|^2 \\
                                                                           \end{array}
                                                                         \right),
 $ its characteristic polynomial is precisely $$ p (\lambda) = \lambda^2 - (|\alpha-1|^2 +|\beta|^2 + |\gamma|^2 + |\delta|^2) \lambda + (|\alpha-1|^2 +|\beta|^2 )( |\gamma|^2 + |\delta|^2) - |(\alpha -1) \overline{\gamma} + \beta\overline{\delta}|^2.$$ The conditions $|\alpha|^2 + |\delta|^2  + |\beta|^2 + | \gamma|^2 =1$ and $\alpha {\delta} = \beta {\gamma}$ imply $$ (|\alpha-1|^2 +|\beta|^2 + |\gamma|^2 + |\delta|^2) = 2- 2 \Re\hbox{e} (\alpha),$$ and $$ (|\alpha-1|^2 +|\beta|^2 )( |\gamma|^2 + |\delta|^2) - |(\alpha -1) \overline{\gamma} + \beta\overline{\delta}|^2 $$ $$= |\alpha-1|^2 |\gamma|^2 + |\alpha-1|^2 |\delta|^2 + |\beta|^2 |\gamma|^2  + |\beta|^2 |\delta|^2 - |\alpha -1|^2 |\gamma|^2 - |\beta|^2 |{\delta}|^2 - 2 \Re\hbox{e} ((\alpha -1) \overline{\gamma} \overline{\beta} {\delta}) $$
 $$=  |\alpha-1|^2 |{\delta}|^2 + | {\beta}|^2 | {\gamma}|^2   - 2 \Re\hbox{e} ((\alpha -1) \overline{\gamma} \overline{\beta} {\delta}) = | (\alpha-1) {\delta} - {\beta} { \gamma} |^2 = | {\delta} |^2. $$ Therefore, $$ p (\lambda) = \lambda^2 - 2(1-  \Re\hbox{e} (\alpha)) \lambda + |\delta|^2,$$ and hence $$\|u-v\|^2 = (1-  \Re\hbox{e} (\alpha)) +\sqrt{ (1-  \Re\hbox{e} (\alpha))^2 - |\delta|^2} $$ $$=  {(1 - \Re\hbox{e} \phi_u (v))+ \sqrt{(1 -   \Re\hbox{e} \phi_u (v))^2 -  \|P_0(u)(v)\|^2}}.$$

The case $J=S_2 (\mathbb{C})$ follows by the same arguments.
\end{proof}

\begin{corollary}\label{c minimal tripotents at distance sqrt2} Let $u$ and $v$ be minimal tripotents in a JB$^*$-triple $E$. Then the condition $\|u\pm v\|= \sqrt{2}$  implies $v\in E^{-1} (u) \oplus E_1 (u).$
\end{corollary}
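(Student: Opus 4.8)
The plan is to apply the distance formula of Proposition \ref{p distance between minimal tripotents} twice: once to the pair $(u,v)$ and once to the pair $(u,-v)$, using that $-v$ is again a minimal tripotent (indeed $\{-v,-v,-v\}=-v$) and that the quantities entering \eqref{eq Fla distance between minimal trip} behave linearly under $v\mapsto -v$. First I would set $a := P^{1}(u)(v) = \Re\hbox{e}\,\phi_u(v)$ and $b := \|P_0(u)(v)\|^2 \ge 0$, and record that $\phi_u(-v) = -\phi_u(v)$ and $P_0(u)(-v) = -P_0(u)(v)$, so that $\Re\hbox{e}\,\phi_u(-v) = -a$ while $\|P_0(u)(-v)\|^2 = b$.

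Since the hypothesis gives $\|u-v\| = \|u+v\| = \|u-(-v)\| = \sqrt{2}$, formula \eqref{eq Fla distance between minimal trip} then produces the two scalar equations
\begin{equation*}
(1-a) + \sqrt{(1-a)^2 - b} = 2 \qquad\text{and}\qquad (1+a) + \sqrt{(1+a)^2 - b} = 2.
\end{equation*}
From the first I would isolate $\sqrt{(1-a)^2-b} = 1+a$ and square to obtain $b = -4a$; the second equation analogously yields $\sqrt{(1+a)^2-b} = 1-a$ and hence $b = 4a$. Comparing the two identities forces $a = 0$ and then $b = 0$, that is, $P^{1}(u)(v) = 0$ and $P_0(u)(v) = 0$.

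It remains to translate these two scalar conclusions through the Peirce and involution decompositions. The vanishing of $P_0(u)(v)$ places $v$ in $E_2(u)\oplus E_1(u)$, while the vanishing of $P^{1}(u)(v)$ says that the $E^{1}(u)$-component of the $E_2(u)$-part of $v$ is zero. Recalling $E_2(u) = E^{1}(u)\oplus E^{-1}(u)$ and $E^{0}(u) = E_0(u)\oplus E_1(u)$, these two facts together give $v\in E^{-1}(u)\oplus E_1(u)$, which is the assertion.

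The computation is short, so the only genuine care lies in the algebraic bookkeeping. I would check that the squaring steps are reversible, which is guaranteed by $|\phi_u(v)|\le \|\phi_u\|\,\|v\| \le 1$, forcing $1\pm a \ge 0$ and ensuring the isolated radicals are nonnegative; and I would make sure at the end to read off the membership correctly via the interplay of $E = E_2(u)\oplus E_1(u)\oplus E_0(u)$ with $E = E^{-1}(u)\oplus E^{0}(u)\oplus E^{1}(u)$. I expect no essential obstacle beyond this, since Proposition \ref{p distance between minimal tripotents} already packages all the structural information about the subtriple generated by $u$ and $v$ that the argument needs.
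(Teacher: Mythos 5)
Your argument is correct and is essentially identical to the paper's own proof: both apply the distance formula of Proposition \ref{p distance between minimal tripotents} to $\|u-v\|^2=\|u+v\|^2=2$, isolate the radicals to obtain $\pm 4\,\Re\hbox{e}\,\phi_u(v)=-\|P_0(u)(v)\|^2$, and conclude $\Re\hbox{e}\,\phi_u(v)=P_0(u)(v)=0$, which yields $v\in E^{-1}(u)\oplus E_1(u)$. Your additional remark on the validity of the squaring step (via $|\phi_u(v)|\le 1$) is a sound, if implicit, point in the original.
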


\begin{proof} Applying Proposition \ref{p distance between minimal tripotents} we know that $$ 2= \|u\mp v\|^2 = {(1 \mp \Re\hbox{e} \phi_u (v))+ \sqrt{(1 \mp  \Re\hbox{e} \phi_u (v))^2 -  \|P_0(u)(v)\|^2}},$$ and hence $$ {(1 \pm \Re\hbox{e} \phi_u (v))= \sqrt{(1 \mp  \Re\hbox{e} \phi_u (v))^2 -  \|P_0(u)(v)\|^2}},$$ $$ {(1 \pm \Re\hbox{e} \phi_u (v))^2=  (1 \mp  \Re\hbox{e} \phi_u (v))^2 -  \|P_0(u)(v)\|^2},$$ $$ \pm 4 \Re\hbox{e} \phi_u (v) = -  \|P_0(u)(v)\|^2,$$ which proves $ \Re\hbox{e} \phi_u (v) = P_0(u)(v)=0,$ and gives the desired conclusion.
\end{proof}

We can now prove that surjective isometries between the unit spheres of elementary JB$^*$-triples of rank $\geq 2$ preserve collinearity between minimal tripotents.

\begin{proposition}\label{p collinear are mapped into collinear} Let $f: S(C) \to S(C^{\prime})$ be a surjective isometry between elementary JB$^*$-triples with rank$(C)\geq 2$. Suppose $e_1$ and $e_2$ are minimal tripotents in $C$ with $e_1\top e_2$. Then $f(e_1) \top f(e_2)$.
\end{proposition}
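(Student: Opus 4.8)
The plan is to characterize collinearity of minimal tripotents purely in terms of distances measured on the unit sphere, so that the isometry $f$ automatically transports the condition. By Proposition \ref{p distance between minimal tripotents}, for minimal tripotents $u,v$ we have
\begin{equation*}
\|u-v\|^2 = (1 - P^1(u)(v)) + \sqrt{(1 - P^1(u)(v))^2 - \|P_0(u)(v)\|^2}.
\end{equation*}
The key observation I would exploit is Corollary \ref{c minimal tripotents at distance sqrt2}: the symmetric condition $\|u \pm v\| = \sqrt{2}$ is equivalent to $\Re\hbox{e}\,\phi_u(v) = 0$ and $P_0(u)(v) = 0$ simultaneously, which forces $v \in E^{-1}(u) \oplus E_1(u)$. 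First I would show that, for minimal tripotents, $e_1 \top e_2$ together with an appropriate normalization is detected by such a $\sqrt{2}$-distance condition. Concretely, if $e_1 \top e_2$ then $e_2 \in C_1(e_1)$, so $P_0(e_1)(e_2) = 0$ and $\phi_{e_1}(e_2) = 0$, giving $\|e_1 \pm e_2\| = \sqrt{2}$ directly from the formula.

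The difficulty is that the converse is not automatic: $\|u \pm v\| = \sqrt{2}$ only yields $v \in E^{-1}(u) \oplus E_1(u)$ by Corollary \ref{c minimal tripotents at distance sqrt2}, which permits a nonzero Peirce-$2$ component lying in $E^{-1}(u) = iE^1(u)$. So a bare $\sqrt{2}$-distance does not by itself pin down collinearity; one must rule out the $E^{-1}(u)$ contribution. My plan is to remove this ambiguity by testing against a suitably rotated partner. Since $f$ preserves minimal tripotents of each rank and their orthogonality and additivity by Theorem \ref{t Tingley antipodes for finite rank new}, and since $f(-e) = -f(e)$, the isometry also respects the whole scale of distances $\|e_1 - \mu e_2\|$ as $\mu$ ranges over the unit circle (after checking $f$ sends the relevant norm-one elements correctly). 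The idea is that collinearity $e_1 \top e_2$ is equivalent to demanding $\|e_1 - \mu e_2\| = \sqrt{2}$ for \emph{every} unimodular $\mu$, because a nonzero component in $E^{\pm 1}(e_1)$ would make $\Re\hbox{e}\,\phi_{e_1}(\mu e_2)$ nonzero for some choice of $\mu$, contradicting the formula above. Testing all $\mu$ kills both the $E^1$ and $E^{-1}$ directions and leaves exactly $v \in E_1(e_1)$, i.e.\ collinearity by Lemma \ref{l collinearity}$(b)$.

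Thus the skeleton is: establish the distance characterization $e_1 \top e_2 \iff \|e_1 - \mu e_2\| = \sqrt{2}$ for all $\mu$ in the unit circle (among minimal tripotents in a rank $\geq 2$ triple); verify that $f$ preserves this family of distances, using that $f$ is a surjective isometry on $S(C)$ carrying minimal tripotents to minimal tripotents (Theorem \ref{t Tingley antipodes for finite rank new}) and that the elements $\mu e_2$ and their images are handled by the affineness on faces in Proposition \ref{p surjective isometries between the spheres preserve finite rank tripotents}$(b)$; then conclude $f(e_1) \top f(e_2)$ by re-reading the characterization in $C'$. The main obstacle I anticipate is the second step, controlling the action of $f$ on the rotated elements $\mu e_2$: the isometry is only real linear a priori, so I must confirm it respects the complex-circle action closely enough that $\|e_1 - \mu e_2\| = \sqrt{2}$ is transported for all $\mu$, not merely for $\mu = \pm 1$. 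Here I would lean on Lemma \ref{l technical headache} to produce an auxiliary minimal tripotent $u \perp e_1$ with $u \top e_2$, using it to set up the $M_2(\mathbb{C})$ or spin subtriple in which the rotations $\mu e_2$ live concretely, and to pin down how $f$ acts inside the associated face.
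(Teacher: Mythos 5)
Your distance characterization of collinearity is correct as far as it goes: for minimal tripotents, $e_1\top e_2$ is indeed equivalent to $\|e_1-\mu e_2\|=\sqrt{2}$ for all unimodular $\mu$ (it suffices to test $\mu\in\{\pm 1,\pm i\}$, by Corollary \ref{c minimal tripotents at distance sqrt2}). The genuine gap is in the transport step, and you have correctly located it but not closed it. The isometry only gives you $\|f(e_1)-f(\mu e_2)\|=\sqrt{2}$; to re-read the characterization in $C'$ you must know that $f(\mu e_2)$ is a unimodular multiple of $f(e_2)$ with non-real ratio for some $\mu$, i.e.\ essentially $f(ie_2)=\pm i f(e_2)$. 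That is precisely Lemma \ref{l i times minimal trip}, which in the paper comes \emph{after} this proposition and whose proof is itself non-trivial (one writes $f(ie_2)=itf(e_2)+\lambda_{12}v_{12}$ in the subtriple generated by $f(e_2)$ and $f(ie_2)$ and kills $\lambda_{12}$ with an auxiliary orthogonal tripotent). Without that input, Corollary \ref{c minimal tripotents at distance sqrt2} applied to each pair $(f(e_1),f(\mu e_2))$ only tells you that each $f(\mu e_2)$ separately lies in $(C')^{-1}(f(e_1))\oplus (C')_1(f(e_1))$; it does not let you conclude $\Im\hbox{m}\,\phi_{f(e_1)}(f(e_2))=0$. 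Your fallback of using affineness on faces (Proposition \ref{p surjective isometries between the spheres preserve finite rank tripotents}$(b)$) to handle the rotated elements does not work either: $e_2$ and $ie_2$ do not lie in a common proper norm-closed face of $\mathcal{B}_C$, so face-affineness says nothing about $f(ie_2)$.

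For comparison, the paper avoids the circle action entirely. It takes the auxiliary minimal tripotent $u$ from Lemma \ref{l technical headache} with $u\perp e_1$ and $u\top e_2$, applies Corollary \ref{c minimal tripotents at distance sqrt2} to \emph{both} pairs $(f(e_1),f(e_2))$ and $(f(u),f(e_2))$ (using $\|e_1\pm e_2\|=\|u\pm e_2\|=\sqrt2$ and Theorem \ref{t Tingley antipodes for finite rank new}), and then observes that the only possibly surviving non-collinear piece of $f(e_2)$, namely its $(C')^{-1}(f(e_1))$-component, lies in $(C')_2(f(e_1))\subseteq (C')_0(f(u))$ because $f(e_1)\perp f(u)$; since $f(e_2)$ has no Peirce-zero part relative to $f(u)$, that component vanishes and Lemma \ref{l collinearity}$(b)$ finishes. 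If you want to salvage your route, you would first have to prove $f(ie_2)=\pm i f(e_2)$ independently of this proposition; otherwise, adopt the Peirce-arithmetic argument with the auxiliary tripotent, which is where your own mention of Lemma \ref{l technical headache} should have been pointed.
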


\begin{proof} Since rank$(C)\geq 2$ by Lemma \ref{l technical headache} there exists a minimal tripotent $u$ in $C$ satisfying  $e_1\perp u$ and $e_2 \top u$. By Proposition \ref{p surjective isometries between the spheres preserve finite rank tripotents} $f(e_1),$ $f(\pm e_2),$ and $f(u)$ are minimal tripotents with $f(e_1)\perp f(u).$  By hypothesis and Theorem \ref{t Tingley antipodes for finite rank new} we have  $$2= \|e_1 \pm e_2\|^2 = \| u\pm e_2\|^2 = \|f(e_1) \pm f(e_2)\|^2 = \| f(u)\pm f(e_2)\|^2.$$ Thus, Corollary \ref{c minimal tripotents at distance sqrt2} shows that $f(e_2) = P^{-1} (f(u)) (f(e_2)) + P_1 (f(u)) (f(e_2))$ and $f(e_2) = P^{-1} (f(e_1)) (f(e_2)) + P_1 (f(e_1)) (f(e_2)).$ Having in mind that $f(e_1) \perp f(u)$, and hence $(C^\prime)^{-1} (f(e_1))\subseteq (C^\prime)_{0} (f(u))$, we obtain $f(e_2) =  P_1 (f(u)) (f(e_2)).$ Lemma \ref{l collinearity}$(b)$ gives  $f(e_1) \top f(e_2)$.
\end{proof}

The next result determines the behavior of a surjective isometry on the space $C^{-1} (e)=i \mathbb{R} e$ associated with a minimal tripotent $e$.

\begin{lemma}\label{l i times minimal trip} Let $f: S(C) \to S(C^{\prime})$ be a surjective isometry between elementary JB$^*$-triples with rank$(C)\geq 2$. Suppose $e$ is a minimal tripotent in $C$. Then $f( i e) = \pm i  f(e)$. Consequently, $f(\lambda e) =\lambda f(e)$ or $f(\lambda e) =\overline{\lambda} f(e)$ for every $\lambda\in \mathbb{C}$.
\end{lemma}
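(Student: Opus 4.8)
First I record the two elementary facts that start everything. Since $L(ie,ie)=L(e,e)$, the tripotent $ie$ is again minimal and has the same Peirce decomposition as $e$; moreover $ie$ has rank one, so by Proposition~\ref{p surjective isometries between the spheres preserve finite rank tripotents}$(d)$ and Theorem~\ref{t Tingley antipodes for finite rank new} the elements $f(e)$ and $f(ie)$ are minimal tripotents in $C'$ and $f(-ie)=-f(ie)$. From $\|e-ie\|=\|e+ie\|=|1\mp i|=\sqrt2$ and the isometric character of $f$ I get $\|f(e)-f(ie)\|=\|e-ie\|=\sqrt2$ and $\|f(e)+f(ie)\|=\|f(e)-f(-ie)\|=\|e+ie\|=\sqrt2$, that is, $\|f(e)\pm f(ie)\|=\sqrt2$. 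Applying Corollary~\ref{c minimal tripotents at distance sqrt2} with $u=f(e)$ and $v=f(ie)$ yields $f(ie)\in (C')^{-1}(f(e))\oplus (C')_1(f(e))$, and the computation in its proof gives in addition $\Re\mathrm{e}\,\phi_{f(e)}(f(ie))=0$ and $P_0(f(e))(f(ie))=0$. Since $f(e)$ is minimal, $(C')^{-1}(f(e))=i\mathbb{R}\,f(e)$, so I may write
\begin{equation}\label{eq plan decomp}
f(ie)=is\,f(e)+b,\qquad s\in\mathbb{R},\quad b=P_1(f(e))(f(ie))\in (C')_1(f(e)).
\end{equation}

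\textbf{Reduction to the crux.} The whole statement $f(ie)=\pm i f(e)$ is now equivalent to proving $b=0$ in \eqref{eq plan decomp}: once $b=0$, the norm-one tripotent $f(ie)=is\,f(e)$ forces $|s|=1$, hence $f(ie)=\pm i f(e)$. Equivalently, I must show that the JB$^*$-subtriple $J'$ generated by $f(e)$ and $f(ie)$ is one-dimensional, i.e. it is the first case in the list \eqref{eq subtriples generated by two min trip}. The distance $\|f(e)\pm f(ie)\|=\sqrt2$ already excludes the orthogonal case $\mathbb{C}\oplus^{\infty}\mathbb{C}$ (there the distance would be $1$). I can also rule out the purely collinear configuration: applying Proposition~\ref{p collinear are mapped into collinear} to the surjective isometry $f^{-1}$ shows that, since $e$ and $ie$ are \emph{not} collinear (indeed $ie\in C_2(e)$), the images $f(e)$ and $f(ie)$ are not collinear either; hence by Lemma~\ref{l collinearity}$(b)$ the element $b$ alone cannot be $f(ie)$, i.e. the case $s=0,\ b\neq0$ is impossible. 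The remaining task is to exclude a genuinely mixed $f(ie)=is\,f(e)+b$ with $s\neq0\neq b$, which corresponds to $J'$ being $M_2(\mathbb{C})$ or $S_2(\mathbb{C})$.

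\textbf{The main obstacle: killing the Peirce-one part $b$.} This is the technical heart, and the reason it is delicate is that \eqref{eq plan decomp}, minimality of $f(ie)$, and non-collinearity are \emph{not} by themselves enough: in a type~$1$ Cartan factor the mixed configuration above does occur for genuine minimal tripotents, so the obstruction must come from global information carried by $f$, not from Peirce data at $f(e)$ alone. My proposal is to transport distances from a well-chosen third minimal tripotent. Using rank$(C)\geq2$ I select a minimal tripotent $z$ in $C$ for which $\|z-e\|\neq\|z-ie\|$; such a $z$ exists because, writing $a=\Re\mathrm{e}\,\phi_z(e)$, $\rho=\|P_0(z)(e)\|$ and using that $\phi_z$ is complex linear (so $\phi_z(ie)=i\phi_z(e)$ and $\|P_0(z)(ie)\|=\rho$), Proposition~\ref{p distance between minimal tripotents} gives $\|z-e\|^2=(1-a)+\sqrt{(1-a)^2-\rho^2}$ while $\|z-ie\|^2=(1+\Im\phi_z(e))+\sqrt{(1+\Im\phi_z(e))^2-\rho^2}$, and these differ for a generic $z$. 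Setting $z'=f(z)$ and invoking $f(-e)=-f(e)$, $f(-ie)=-f(ie)$, the isometry $f$ preserves the four distances $\|z'\pm f(e)\|$ and $\|z'\pm f(ie)\|$; feeding the decomposition \eqref{eq plan decomp} into the distance formula of Proposition~\ref{p distance between minimal tripotents} (again using linearity of $\phi_{z'}$) produces an over-determined system in the quantities $\Re\mathrm{e}\,\phi_{z'}(f(e))$, $\|P_0(z')(f(e))\|$ and $\|b\|$. The expectation is that its only solutions force $\|b\|=0$, collapsing $J'$ to $\mathbb{C}$ and giving $f(ie)=\pm i f(e)$. I anticipate this distance bookkeeping to be the genuinely hard and case-sensitive step (in the rank-two symmetric factor $S_2(\mathbb{C})$, for instance, minimality of $f(ie)$ already forces $b=0$ directly, whereas the type~$1$ case is exactly where the third-tripotent argument is indispensable).

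\textbf{The consequence.} Finally, fix the sign by writing $f(ie)=\varepsilon\, i f(e)$ with $\varepsilon\in\{+1,-1\}$. For an arbitrary unimodular $\lambda$, the element $\lambda e$ is a minimal tripotent, and $f$ preserves its four distances to $\pm e$ and $\pm ie$, namely $\|\lambda e\mp e\|=|\lambda\mp1|$ and $\|\lambda e\mp ie\|=|\lambda\mp i|$, transported to the known images $\pm f(e)$ and $\pm\varepsilon i f(e)$. Substituting these into Proposition~\ref{p distance between minimal tripotents} for the minimal tripotent $f(\lambda e)$ pins down both $\phi_{f(e)}(f(\lambda e))$ and $P_0(f(e))(f(\lambda e))$, yielding $f(\lambda e)=\lambda f(e)$ when $\varepsilon=+1$ and $f(\lambda e)=\overline{\lambda}f(e)$ when $\varepsilon=-1$, with the single alternative determined by $\varepsilon$ independently of $\lambda$. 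This gives the asserted formula $f(\lambda e)=\lambda f(e)$ or $f(\lambda e)=\overline{\lambda}f(e)$.
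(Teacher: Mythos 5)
Your setup is sound and coincides with the paper's own first steps: $f(e)$ and $f(ie)$ are minimal tripotents, $\|f(e)\pm f(ie)\|=\sqrt{2}$, Corollary \ref{c minimal tripotents at distance sqrt2} yields $f(ie)=is\,f(e)+b$ with $s\in\mathbb{R}$ and $b\in (C')_1(f(e))$, and your exclusion of the orthogonal and purely collinear configurations is correct (the latter via Proposition \ref{p collinear are mapped into collinear} applied to $f^{-1}$, which is legitimate because Theorem \ref{t Tingley antipodes for finite rank new} gives rank$(C')=$ rank$(C)\geq 2$). Your closing ``consequence'' computation is also a workable alternative to the paper's (the paper instead takes a minimal tripotent $u\perp e$ and uses the real linear isometry $T_u$ of Proposition \ref{p surjective isometries between the spheres preserve finite rank tripotents}$(c)$ on $C_0(u)\ni e, ie, \lambda e$). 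However, the entire content of the lemma is the step you did not carry out: proving $b=0$. You present it as an ``expectation'' and yourself flag the ``distance bookkeeping'' as the genuinely hard, unfinished part; as it stands this is a plan with the crux missing, not a proof.

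Moreover, the strategy you propose for that crux looks unworkable as described. Proposition \ref{p distance between minimal tripotents} only ever outputs the two scalars $\Re\hbox{e}\,\phi_u(v)$ and $\|P_0(u)(v)\|$. Relative to a third tripotent $z'=f(z)$, the data of $f(ie)=is\,f(e)+b$ reads $\Re\hbox{e}\,\phi_{z'}(f(ie))=-s\,\Im\hbox{m}\,\phi_{z'}(f(e))+\Re\hbox{e}\,\phi_{z'}(b)$ and $\|P_0(z')(f(ie))\|=\|is\,P_0(z')(f(e))+P_0(z')(b)\|$; the quantity $\Im\hbox{m}\,\phi_{z'}(f(e))$ and the vectors $\phi_{z'}(b)$, $P_0(z')(b)$ are fresh unknowns invisible to the distance formula, so your system is under-determined rather than over-determined, and you cannot bring in distances to $iz$ without already knowing $f(iz)=\pm i f(z)$, which is circular. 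The paper kills $b$ by a non-metric argument: a case-by-case inspection of the subtriple generated by $f(e)$ and $f(ie)$ (the list \eqref{eq subtriples generated by two min trip}) writes $f(ie)=it\,f(e)+\lambda_{12}v_{12}$ with $v_{12}$ minimal and $f(e)\top v_{12}$; Lemma \ref{l technical headache} then produces a minimal tripotent $v_{22}$ with $v_{22}\perp f(e)$ and $v_{12}\top v_{22}$; Theorem \ref{t Tingley antipodes for finite rank new} applied to $f^{-1}$ gives $e\perp f^{-1}(v_{22})$, hence $ie\perp f^{-1}(v_{22})$ (orthogonality is unaffected by the scalar $i$), and applied to $f$ again gives $f(ie)\perp v_{22}$; since $f(e)\perp v_{22}$ while $v_{12}\in C'_1(v_{22})$, and $C'_0(v_{22})\cap C'_1(v_{22})=\{0\}$, this forces $\lambda_{12}=0$, i.e. $b=0$. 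This orthogonality-transport through $f^{-1}$ and $f$ is the idea your proposal is missing.
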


\begin{proof} The elements $f(e)$ and $f(i e)$ are minimal tripotents in $C^\prime$ (see Proposition \ref{p surjective isometries between the spheres preserve finite rank tripotents}). Since, by Theorem \ref{t Tingley antipodes for finite rank new}, we have $$\|f(e)\pm f(ie) \| =\|f(e) - f(\mp ie) \| = \|e \pm i e\| =\sqrt{2},$$ we deduce from Corollary \ref{c minimal tripotents at distance sqrt2} that $f(ie) \in C^{-1} (f(e)) \oplus C_{1} (f(e))= i \mathbb{R} f(e) \oplus C_{1} (f(e))$. If we consider the JB$^*$-subtriple $J$ generated by $f(e)$ and $f(ie)$, we know that $J$ is isomorphic to one of $\mathbb{C}, $ $\mathbb{C}\oplus^{\infty} \mathbb{C},$ $M_{1,2} (\mathbb{C}),$ $M_{2} (\mathbb{C})$ and $S_2 (\mathbb{C})$ (see \cite[Proposition 5]{FriRu85}). Arguing case by case, we can find a minimal tripotent $v_{12}\in C^\prime$, a real $t$ and a complex $\lambda_{12}$ satisfying $t^2 + |\lambda_{12}|^2 =1$, $f(e)\top v_{12}$, and  $f(ie) = i t e + \lambda_{12} v_{12}.$\smallskip

Applying Lemma \ref{l technical headache} we find a minimal tripotent $v_{22}$ in $C^\prime$ such that $f(e)\perp v_{22}$ and $v_{12}\top v_{22}$. Theorem \ref{t Tingley antipodes for finite rank new} applied to $f^{-1}$ implies that $e\perp f^{-1} (v_{22})$ and $f^{-1} (v_{22})$ is a minimal tripotent in $C$. Therefore $i e\perp f^{-1} (v_{22})$. A new application of Theorem \ref{t Tingley antipodes for finite rank new}, shows that $i t + \lambda_{12} v_{12}= f(ie) \perp v_{22}$, which implies $\lambda_{12} =0$, and consequently, $f(ie ) =\pm i f(e)$.\smallskip

To prove the last statement, let us take $u$ in $C$ with $e\perp u$ and let $T_u : C_0(u) \to C^\prime_0(f(e))$ be the surjective real linear isometry given by Proposition \ref{p surjective isometries between the spheres preserve finite rank tripotents}. For each complex number $\lambda$, we have $$f(\lambda e) = T_u(\lambda u) = \Re\hbox{e} (\lambda) T_u(e) + \Im\hbox{m} (\lambda) T_u( i e) $$ $$=  \Re\hbox{e} (\lambda) f(e) + \Im\hbox{m} (\lambda) f( i e) =\Re\hbox{e} (\lambda) f(e) \pm i \Im\hbox{m} (\lambda) f( e),$$ that is $f(\lambda e) =\lambda f(e)$ or $f(\lambda e) =\overline{\lambda} f(e)$, for every $\lambda\in \mathbb{C}$.
\end{proof}

Following the notation in \cite{DanFri87} (see also \cite{Neher87}), we recall that an ordered quadruple $(u_{1},u_{2},u_{3},u_{4})$ of tripotents in a JB$^*$-triple $E$ is called a \emph{quadrangle} if $u_{1}\bot u_{3}$, $u_{2}\bot u_{4}$, $u_{1}\top u_{2}$ $\top u_{3}\top u_{4}$ $\top u_{1}$ and $u_{4}=2 \J
{u_{1}}{u_{2}}{u_{3}}$ (the axiom (JB$^*$1) implies that the last equality holds if the indices are permutated
cyclically, e.g. $u_{2} = 2 \{{u_{3}},{u_{4}},{u_{1}}\}$).\smallskip

Let $u$ and $v$ be tripotents in $E$. We say that $u$ \emph{governs} $v$, $u \vdash v$, whenever $v\in U_{2} (u)$ and $u\in U_{1} (v)$.
An ordered triplet $ (v,u,\tilde v)$ of tripotents in $E$, is called a \emph{trangle} if $v\bot \tilde v$, $u\vdash v$, $u\vdash \tilde v$ and $ v = Q(u)\tilde v$.

\begin{proposition}\label{p quadrangles and trangles} Let $f: S(C) \to S(C^{\prime})$ be a surjective isometry between elementary JB$^*$-triples with rank$(C)\geq 2$, and let $e$ be a minimal tripotent in $C$. The following statements hold:\begin{enumerate}[$(a)$]\item If $(u_{1},u_{2},u_{3},u_{4})$ is a quadrangle of minimal tripotents in $C$, then the quadruple $(f(u_{1}),f(u_{2}),f(u_{3}),f(u_{4}))$ is a quadrangle of minimal tripotents in $C^\prime$;
\item Suppose $(u_{1},u_{2},u_{3},u_{4})$ is a quadrangle of minimal tripotents in $C$ and $f(i u_1) = i f(u_1)$ {\rm(}respectively, $f(i u_1) = -i f(u_1)${\rm)}, then $f(i u_j) = i f(u_j)$ {\rm(}respectively, $f(i u_j) = -i f(u_j)${\rm)}, for every $j=2,3,4$;
\item Suppose $f(i e) = i f(e)$ {\rm(}respectively, $f(i e) = -i f(e)${\rm)}, then $f(i v) = i f(v)$ {\rm(}respectively, $f(i v) = -i f(v)${\rm)} for every minimal tripotent $v\in C$ with $v\top e$;
\item If $ (v,u,\tilde v)$ is a trangle in $C$ with $v, \tilde v$ minimal, then $ (f(v),f(u),f(\tilde v))$ is a trangle in $C^\prime$, with $f(v)$ and $f(\tilde v)$ minimal;
\item Suppose $ (v,u,\tilde v)$ is a trangle in $C$ with $v, \tilde v$ minimal. If $f(i v) = i f(v)$  {\rm(}respectively, $f(i v) = -i f(v)${\rm)}, then $f(i \tilde v) = i f(\tilde v)$ and $f(i u) = i f(u)$ {\rm(}respectively, $f(i \tilde v) = -i f(\tilde v)$ and $f(i u) = -i f(u)${\rm)};
\item Suppose $ (v,u,\tilde v)$ is a trangle in $C$ with $v, \tilde v$ minimal. If $f(i u) = i f(u)$  {\rm(}respectively, $f(i u) = -i f(u)${\rm)}, then $f(i \tilde v) = i f(\tilde v)$ and $f(i v) = i f(v)$ {\rm(}respectively, $f(i \tilde v) = -i f(\tilde v)$ and $f(i v) = -i f(v)${\rm)}.
\end{enumerate}
\end{proposition}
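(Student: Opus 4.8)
The plan is to treat the six assertions in the order $(a),(b),(c),(d),(e),(f)$, since each sign-propagation statement rests on the corresponding structure-preservation statement. Throughout I shall write $f(iw)=\epsilon_w\, i\, f(w)$ for the sign $\epsilon_w\in\{1,-1\}$ attached to a minimal tripotent $w$ by Lemma \ref{l i times minimal trip}. For $(a)$, the relations $f(u_1)\perp f(u_3)$, $f(u_2)\perp f(u_4)$ and the collinearity chain $f(u_1)\top f(u_2)\top f(u_3)\top f(u_4)\top f(u_1)$ are immediate from Theorem \ref{t Tingley antipodes for finite rank new} and Proposition \ref{p collinear are mapped into collinear}, while the minimality of each $f(u_j)$ is given by Proposition \ref{p surjective isometries between the spheres preserve finite rank tripotents}. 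The only delicate point is the closure identity $f(u_4)=2\{f(u_1),f(u_2),f(u_3)\}$. Setting $w_4:=2\{f(u_1),f(u_2),f(u_3)\}$, the quadruple $(f(u_1),f(u_2),f(u_3),w_4)$ is a quadrangle and generates a subtriple isometric to $M_2(\mathbb{C})$; since $f(u_4)$ is collinear to $f(u_1)$ and $f(u_3)$ and orthogonal to $f(u_2)$, the joint Peirce decomposition relative to $f(u_1)+f(u_3)$ shows $f(u_4)\in C'_2(f(u_1)+f(u_3))$ and that, inside this Peirce-$2$ space, $f(u_4)=\nu\, w_4$ for some unimodular $\nu$. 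To force $\nu=1$ I would use the affine behaviour of $f$ on faces: $p:=\tfrac12(u_1+u_2+u_3+u_4)$ is a minimal tripotent, both $u_1+u_3$ and $u_2+u_4$ lie in the norm-closed face determined by $p$, and $p=\tfrac12\big((u_1+u_3)+(u_2+u_4)\big)$, so Proposition \ref{p surjective isometries between the spheres preserve finite rank tripotents}$(b)$ together with additivity on orthogonal tripotents (Theorem \ref{t Tingley antipodes for finite rank new}) yields $f(p)=\tfrac12\big(f(u_1)+f(u_2)+f(u_3)+\nu\, w_4\big)$. Computing the norm of the right-hand side inside the generated copy of $M_2(\mathbb{C})$ gives $\tfrac12\sqrt{2+|1+\nu|}$, and since $\|f(p)\|=1$ this equals $1$ precisely when $\nu=1$.

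Statement $(b)$ I would obtain by the same two-way computation applied to $ip$. On one hand Lemma \ref{l i times minimal trip} gives $f(ip)=\pm i\, f(p)$; on the other hand the affineness argument applied to $ip=\tfrac12\big((iu_1+iu_3)+(iu_2+iu_4)\big)$, together with additivity, gives $f(ip)=\tfrac{i}{2}\big(\epsilon_{u_1}f(u_1)+\epsilon_{u_2}f(u_2)+\epsilon_{u_3}f(u_3)+\epsilon_{u_4}f(u_4)\big)$. Comparing coefficients in the generated $M_2(\mathbb{C})$ forces $\epsilon_{u_1}=\epsilon_{u_2}=\epsilon_{u_3}=\epsilon_{u_4}$, which is exactly $(b)$. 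Statement $(c)$ then follows from $(b)$: given a minimal $v$ with $v\top e$, Lemma \ref{l technical headache} (applicable since $\mathrm{rank}(C)\geq2$) produces a minimal $u$ with $e\perp u$ and $v\top u$, so that $(e,v,u,2\{e,v,u\})$ is a quadrangle, and applying $(b)$ to it gives $\epsilon_v=\epsilon_e$.

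For $(d)$ I would first record that $f(v),f(\tilde v)$ are minimal (Proposition \ref{p surjective isometries between the spheres preserve finite rank tripotents}) and orthogonal (Theorem \ref{t Tingley antipodes for finite rank new}), and that $f(u)$ is a rank-two tripotent (Theorem \ref{t Tingley antipodes for finite rank new} preserves rank). Writing the governing tripotent via its spectral parts $u=\mu_+-\mu_-$, with $\mu_+,\mu_-$ orthogonal minimal tripotents and $v+\tilde v=\mu_++\mu_-$, additivity and $f(-\mu_-)=-f(\mu_-)$ give $f(u)=f(\mu_+)-f(\mu_-)$ and $f(v)+f(\tilde v)=f(\mu_+)+f(\mu_-)$. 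Thus $\{f(v),f(\tilde v)\}$ and $\{f(\mu_+),f(\mu_-)\}$ are two orthogonal rank-one decompositions of the same rank-two tripotent $f(v)+f(\tilde v)$; the distance formula of Proposition \ref{p distance between minimal tripotents}, applied to the (preserved) distances among $v,\tilde v,\mu_\pm$, pins down the relative position of the two frames inside the generated copy of $S_2(\mathbb{C})$, delivering the governing relations $f(u)\vdash f(v)$, $f(u)\vdash f(\tilde v)$ and the closure $f(v)=Q(f(u))f(\tilde v)$ up to a phase; the phase is fixed to be trivial by the norm computation of $(a)$ applied to the minimal tripotent $\mu_+=\tfrac12\big((v+\tilde v)+u\big)$, for which $f(\mu_+)=\tfrac12\big(f(v)+f(\tilde v)+f(u)\big)$ and $\|f(\mu_+)\|=1$.

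Finally, $(e)$ and $(f)$ come from running the computation of $(b)$ through the trangle. Using $\mu_\pm=\tfrac12\big((v+\tilde v)\pm u\big)$, affineness and additivity give $f(i\mu_\pm)=\tfrac12\big(\epsilon_v\, i\, f(v)+\epsilon_{\tilde v}\, i\, f(\tilde v)\pm f(iu)\big)$, while Lemma \ref{l i times minimal trip} gives $f(i\mu_\pm)=\epsilon_{\mu_\pm}\, i\, f(\mu_\pm)$ and $f(iu)=f(i\mu_+)-f(i\mu_-)$. Substituting $f(\mu_\pm)=\tfrac12\big(f(v)+f(\tilde v)\pm f(u)\big)$ from $(d)$ and comparing the coefficients of $f(v),f(\tilde v)$ and $f(u)$ inside the generated $S_2(\mathbb{C})$ forces $\epsilon_{\mu_+}=\epsilon_{\mu_-}=\epsilon_v=\epsilon_{\tilde v}$ and $f(iu)=\epsilon_v\, i\, f(u)$; reading these equalities starting from the hypothesis on $v$ proves $(e)$, and reading them starting from the hypothesis $f(iu)=\pm i\, f(u)$ (which already forces $\epsilon_{\mu_+}=\epsilon_{\mu_-}$) proves $(f)$. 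I expect the main obstacle throughout to be the elimination of the residual unimodular ambiguity in the closure identities of $(a)$ and $(d)$: collinearity, orthogonality and governing only determine the images up to a phase, and it is the interplay of the affine behaviour of $f$ on norm-closed faces with an explicit norm computation in $M_2(\mathbb{C})$ (resp.\ $S_2(\mathbb{C})$) that removes it; a secondary technical point is verifying, through Proposition \ref{p distance between minimal tripotents}, that the governing relation is preserved in $(d)$, since unlike collinearity it is not covered by the earlier results.
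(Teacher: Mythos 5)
Your overall strategy is sound and, in its skeleton, coincides with the paper's: the same auxiliary orthogonal minimal tripotents ($w=\tfrac12(u_1+u_2+u_3+u_4)$ and $\tilde w=\tfrac12(u_1-u_2+u_3-u_4)$ for quadrangles, your $\mu_\pm=\tfrac12((v+\tilde v)\pm u)$ for trangles), the same use of additivity on orthogonal finite rank tripotents (Theorem \ref{t Tingley antipodes for finite rank new}), of Lemma \ref{l i times minimal trip}, and of the distance formula (Proposition \ref{p distance between minimal tripotents}). Where you genuinely diverge is in how the closure identities are nailed down. For $(a)$ the paper observes that $f(u_2)+f(u_4)=f(w)-f(\tilde w)$ is a symmetry in the JBW$^*$-algebra $C'_2(f(u_1)+f(u_3))$, so that it is fixed by $Q(f(u_1)+f(u_3))$, and Peirce arithmetic then yields $2\{f(u_1),f(u_2),f(u_3)\}=f(u_4)$ with no residual phase; you instead locate $f(u_4)$ in the one-dimensional space $C'_0(f(u_2))\cap C'_2(f(u_1)+f(u_3))$ and kill the unimodular ambiguity via affineness on the face of $p$ and the norm computation $\|\tfrac12(f(u_1)+f(u_2)+f(u_3)+\nu w_4)\|=\tfrac12\sqrt{2+|1+\nu|}$. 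Both work; the paper's route is phase-free from the start, yours buys a more visibly elementary finish at the cost of an extra computation. Your $(b)$, $(c)$, $(e)$ and $(f)$ are essentially the paper's arguments in different clothing: the paper obtains $f(w)=\tfrac12\sum_j f(u_j)$ directly from the two additivity identities rather than from affineness on a face, but the coefficient comparison is the same.

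The one place you are materially under-specified is $(d)$. The phrase ``pins down the relative position of the two frames inside the generated copy of $S_2(\mathbb{C})$'' presupposes part of what has to be proven: before the governing relations are established you do not know that $f(v)$, $f(\tilde v)$ and $f(u)$ sit inside a copy of $S_2(\mathbb{C})$, and the preserved distances among $v,\tilde v,\mu_\pm$ only give $P_2(f(v))(f(\mu_\pm))=\tfrac12 f(v)$ and $\|P_0(f(v))(f(\mu_\pm))\|=\tfrac12$ (and the analogues for $\tilde v$). To reach $f(u)\vdash f(v)$ one still needs the chain the paper carries out: $f(\mu_\pm)$ are orthogonal projections in the JBW$^*$-algebra $C'_2(f(v)+f(\tilde v))$, hence $P_0(f(v))(f(\mu_+))=P_2(f(\tilde v))(f(\mu_+))=\tfrac12 f(\tilde v)$, hence $f(u)=f(\mu_+)-f(\mu_-)=2P_1(f(v))(f(\mu_+))\in C'_1(f(v))\cap C'_1(f(\tilde v))$, and then the symmetry identity $\{f(u),f(v)+f(\tilde v),f(u)\}=f(v)+f(\tilde v)$ to conclude $f(v),f(\tilde v)\in C'_2(f(u))$, after which Peirce arithmetic gives the exact closure $Q(f(u))(f(v))=f(\tilde v)$ with no phase left over (so your final norm computation becomes unnecessary). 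Your ingredients are the right ones and the gap is fillable, but as written the governing relation is asserted rather than derived.
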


\begin{proof}$(a)$ We know from Theorem \ref{t Tingley antipodes for finite rank new} and Proposition \ref{p collinear are mapped into collinear} that $f(u_{1}),f(u_{2}),$ $f(u_{3}),$ and $f(u_{4})$ are minimal tripotents in $C^\prime$ with $f(u_{1})\top f(u_{2})\top f(u_{3}) \top f(u_{4})\top f(u_{1}),$ $f(u_{1})\perp f(u_{3})$ and $f(u_{2})\perp f(u_{4})$. We only have to show that $$2 \{f(u_{1}), f(u_{2}), f(u_{3})  \} = f(u_{4})$$ to conclude the proof. To this end we observe that $w = \frac12 (u_{1}+u_{2}+u_{3}+u_{4})$ and $\tilde w = \frac12 ( u_{1}-u_{2}+u_{3}-u_{4})$ are minimal tripotents in $C$ with $w\perp \tilde w$. Theorem \ref{t Tingley antipodes for finite rank new} implies that \begin{equation}\label{eq 1711 a} f (w) + f(\tilde w) = f (w + \tilde w) = f(u_{1}+u_{3}) = f(u_{1})+f(u_{3}),
 \end{equation} which implies that $f (w)$ and $f(\tilde w)$ are orthogonal projections in the JBW$^*$-algebra $C^\prime_2 (f(u_{1})+f(u_{3}))$. To see this, we simply observe that $\{f(w),f(u_{1})+f(u_{3}),f(w)\} =$ $ \{f(w),f(w),f(w)\} $ $= f(w) =$ $ \{f(u_{1})+f(u_{3}),f(w),f(u_{1})+f(u_{3})\},$ and the same for $f(\tilde w)$.\smallskip

In particular, \begin{equation}\label{eq 1711 b} f(w) -f(\tilde w)= f(u_{2}+u_{4}) = f(u_{2})+f(u_{4})
 \end{equation} is a symmetry in the JBW$^*$-algebra $C^\prime_2 (f(u_{1})+f(u_{3}))$ and hence $$f(u_{2})+f(u_{4}) = \{f(u_{1})+f(u_{3}),f(u_{2})+f(u_{4}), f(u_{1})+f(u_{3})\},$$ which via Peirce arithmetic, shows that $$ f(u_{2})+f(u_{4}) = 2 \{f(u_{1}) ,f(u_{2}) , f(u_{3})\} + 2 \{f(u_{1}) , f(u_{4}), f(u_{3})\},$$ and hence $2 \{f(u_{1}) ,f(u_{2}) , f(u_{3})\} = f(u_{4})$.\smallskip

$(b)$ Let us assume that $f(i u_1) = i f(u_1)$. By $(a)$, $(f(u_{1}),f(u_{2}),f(u_{3}),f(u_{4}))$ is a quadrangle of minimal tripotents in $C^\prime$, and for the minimal tripotents $w = \frac12 (u_{1}+u_{2}+u_{3}+u_{4})$ and $\tilde w = \frac12 ( u_{1}-u_{2}+u_{3}-u_{4})$, by \eqref{eq 1711 a} and \eqref{eq 1711 b}, we have $f(w) = \frac12 (f(u_{1})+f(u_{2})+f(u_{3})+f(u_{4}))$ and $f(\tilde w) = \frac12 (f(u_{1})-f(u_{2})+f(u_{3})-f(u_{4})).$ By Lemma \ref{l i times minimal trip} it follows that $f(i(w+\tilde w))\in \{ \pm i f(w\pm\tilde w)\}$. Therefore, applying Theorem \ref{t Tingley antipodes for finite rank new}, we obtain
$$i f(u_1) + f( i u_3) =  f(i u_1) + f(i u_3) =  f(i u_1  +i  u_3) = f(i(w+\tilde w))$$ $$ \in\{ \pm i f(w\pm \tilde w)\} = \{\pm i f(u_1+ u_3),\pm i f(u_2+ u_4)\} $$ $$= \{ \pm i (f(u_1) + f(u_3)), \pm i (f(u_2) + f(u_4))\} ,$$ and since $f(i u_3 ) \in \{\pm i f(u_3)\}$, the unique possible choice for $f( i u_3)$ is $i f(u_3)$. By orthogonality $f(i w) + f( i \tilde w) = f( i w + i \tilde w)  =  f(i u_1  +i  u_3)= f(i u_1)  + f( i u_3) = i f( u_1)  + i f( u_3) = i (f(w) +f(\tilde w))$, and since $f(i w)\in \{\pm i f( w)\}$ and $f(i \tilde w)\in \{\pm i f(\tilde w)\}$, we obtain $f(i w) = i f(w)$ and $f(i \tilde w) = i f( \tilde w)$. Consequently, $f(i u_2 + i u_4) = f(i  w - i \tilde w) = f(i  w ) - f(i \tilde w) = i f(w) - i f(\tilde w) = i f(u_2) + i f(u_4),$ which implies $ f(i u_2) = i f(u_2)$ and $f(i u_4) = i f(u_4)$.\smallskip

$(c)$ Let $v$ be a minimal tripotent in $C$ with $e\top v$. By Lemma \ref{l technical headache} there exists a minimal tripotent $u\in C$ such that $e\perp u$ and $v\top u$. By \cite[Proposition 1.7]{DanFri87}, the element $\tilde v = 2 \{ e, v, u\}$ is a minimal tripotent in $C$ and $(e,v,u,\tilde v)$ is a quadrangle in $C$. Since $f(i e) = i f(e)$, it follows from $(b)$ that $f(i v) = i f(v)$.\smallskip

$(d)$ Let  $ (v,u,\tilde v)$ be a trangle in $C$ with $v, \tilde v$ minimal. It is known that $w = \frac12 (v + \tilde v + u)$ and  $\tilde w = \frac12 (v + \tilde v - u)$ are minimal tripotents in $C$ with $w\perp \tilde w$ (compare, for example, \cite[Theorem 4.10]{Ka81} or \cite[Corollary 2.8 and Remark 2.6]{FerMarPe}). Therefore $u = w -\tilde w$ is a rank 2 tripotent in $C$. It follows from Theorem \ref{t Tingley antipodes for finite rank new} that $f(u)$, $f(v),$ $f(w),$ $f(\tilde w),$ and $f(\tilde v)$ are tripotents in $C^\prime,$ where $f(v),$ $f(w),$ $f(\tilde w),$ and $f(\tilde v)$ are minimal, $f(u)$ has rank 2, $f(v) \perp f(\tilde v)$, $f(w)\perp f(\tilde w),$ and  $f(u)= f(w) - f(\tilde w)$.\smallskip

By the hypothesis on $f$, Theorem \ref{t Tingley antipodes for finite rank new} and Proposition \ref{p distance between minimal tripotents}, we have $$ {(1 + P^{1} (f(v)) (f(w)))+ \sqrt{(1 +  P^{1} (f(v)) (f(w)))^2 -  \|P_0(f(v))(f(w))\|^2}} $$ $$ =\|f(v)+ f(w)\|^2= \| v + w\|^2 = \frac 32+\sqrt{2}$$ and $$  {(1 - P^{1} (f(v)) (f(w)))+ \sqrt{(1 -   P^{1} (f(v)) (f(w)))^2 -  \|P_0(f(v))(f(w))\|^2}}$$ $$=\|f(v)- f(w)\|^2 = \| v - w\|^2 = \frac 12,$$ where we have identified $P^{1} (f(v)) (f(w)))$ with the real number $\Re\hbox{e} (\phi_{f(v)} (f(w)))$. It is not hard to see that the unique solution of the above system gives $$P^{1} (f(v)) (f(w))) = \frac12 f(v),\hbox{ and } \|P_0(f(v))(f(w))\| =\frac12.$$ By replacing $v$ with $\tilde v$ we get
$$P^{1} (f(\tilde v)) (f(w))) = \frac12 f(\tilde v),\hbox{ and } \|P_0(f(\tilde v))(f(w))\| =\frac12.$$ We also know that $v +\tilde v = w +\tilde{w}$ and hence $f(v) + f(\tilde v) = f(w) + f(\tilde w)$, and hence $f(w), f(\tilde w)\in C^\prime_2 (f(v)+f(\tilde v))$. Therefore, $P_0(f(v))(f(w)) = P_2(f(\tilde v))(f(w))$. If we observe that $(C^{\prime})^{1} (f(v))\subseteq C^{\prime}_{2} (f(v)) = \mathbb{C} f(v)$, we obtain $P_2(f(v)) (f(w)) = P^{1} (f(v)) (f(w))) = \frac12 f(v),$ and thus $f(w) = \frac12 f(v) + P_1(f(v)) (f(w)) +\frac12 f(\tilde v)$. It is also clear from the above that $P_1(f(\tilde v)) (f(w)) = P_1(f(v)) (f(w))$. We have therefore shown that $$ f(w) = \frac12 f(v)+ \frac12 f(\tilde v) + P_1(f(\tilde v)) (f(w)).$$ Similar arguments applied to $f(\tilde w)$ prove $$ f(\tilde w) = \frac12 f(v)+ \frac12 f(\tilde v) + P_1(f(\tilde v)) (f(\tilde w)).$$
The equality $f(v) + f(\tilde v) = f(w) + f(\tilde w)$ implies that $$P_1(f(\tilde v)) (f(\tilde w)) = - P_1(f(\tilde v)) (f( w)),$$ and hence $$f(u) = f(w) - f(\tilde w) = 2 P_1(f(\tilde v)) (f( w)) = 2 P_1(f( v)) (f( w))\in C^{\prime}_{1} (f(v))\cap C^{\prime}_{1} (f(\tilde v)).$$

The identity $$\{f(u), f(v) +f(\tilde v), f(u) \} = \{f(w) - f(\tilde w), f(w) + f(\tilde w),f(w) - f(\tilde w) \} $$ $$=\{f(w),f(w),f(w) \}+ \{f(\tilde w),f(\tilde w),f(\tilde w)\} = f(w)+ f(\tilde w) = f(v) +f(\tilde v),$$ proves that $f(u)$ actually is a symmetry in the JB$^*$-algebra  $C^\prime_2 (f(v)+f(\tilde v))$, and then $C^\prime_2 (f(v)+f(\tilde v)) = C^\prime_2 (f(u))$, which gives $f(v),f(\tilde v) \in  C^\prime_2 (f(u))$.\smallskip

Finally, since $f(u) \in C^{\prime}_{1} (f(v))\cap C^{\prime}_{1} (f(\tilde v))$, we deduce from Peirce rules that $$\{f(u), f(v), f(u)\}\in C^\prime_0 (f(v))\cap C^\prime_2 (f(v)+f(\tilde v)) = \mathbb{C} f(\tilde v),$$
$$\{f(u), f(\tilde v), f(u)\}\in C^\prime_0 (f(\tilde v))\cap C^\prime_2 (f(v)+f(\tilde v)) = \mathbb{C} f( v),$$  and $$\{f(u), f(v)+f(\tilde v), f(u)\}= f(v)+f(\tilde v),$$ and hence $\{f(u), f(v), f(u) \} = f(\tilde v)$ and $\{f(u), f(\tilde v), f(u)\}= f(v),$ which finishes the proof of $(d)$.\smallskip

$(e)$ Let $ (v,u,\tilde v)$ be a trangle in $C$ with $v, \tilde v$ minimal, and $f(i v) = i f(v)$. By $(d)$ the 3-tuple $ (f(i v),f(i u), f(i \tilde v))$ is a trangle in $C^\prime$ with $f(i v), f(i \tilde v)$ minimal. Following the arguments in the proof of $(d)$, $ i w =  \frac12 i ( v + \tilde v + u)$,  $i \tilde w = \frac12i  (v + \tilde v - u),$ $f(i w)$ and $f( i \tilde w)$ are minimal tripotents with $i w\perp i \tilde w$, $f(i w) \perp f(i \tilde w)$, $f( u)= f( w) - f( \tilde w),$ $f( v) + f( \tilde v) = f( w) + f( \tilde w),$ $$f(i u)= f(i w) - f(i \tilde w)\in C^{\prime}_{1} (f(i v))\cap C^{\prime}_{1} (f( i \tilde v)),$$ and $$f(i v) + f(i \tilde v) = f(i w) + f(i \tilde w).$$ Since $f(i v) = i f(v)$ and $f(i z)\in \{\pm i f(z)\}$, for every $z= w,\tilde w, \tilde v$, we deduce that $f(i \tilde v) = i f(\tilde v)$ and $f(i u) = i f(u)$.\smallskip

$(f)$ With the notation employed in the proofs of $(d)$ and $(e),$ if $f(i u) = i f(u)$, we have $i (f(w) -f(\tilde w)) = i f(w- \tilde w) = i f(u) = f(i u ) =  f(i w- i \tilde w) = f(i w) -f(i \tilde w)$, and hence, by orthogonality relations, $f(i w) = i f(w)$, $f(i \tilde w) = i f(\tilde w)$. We similarly get $f(i \tilde v) = i f(\tilde v)$ and $f(i v) = i f(v)$.
\end{proof}

We have developed enough tools to establish that surjective isometries between the unit spheres of elementary JB$^*$-triples of rank greater or equal than 2 are $\ell_2$-additive on collinear minimal tripotents.

\begin{proposition}\label{p additivity on collinear min trip} Let $f: S(C) \to S(C^{\prime})$ be a surjective isometry between elementary JB$^*$-triples with rank$(C)\geq 2$. Suppose $e_1$ and $e_2$ are minimal tripotent in $C$ with $e_1\top e_2$. Then, the following statements hold:
\begin{enumerate}[$(a)$]\item If $f(i e_1) =i f(e_1)$, then $$f( \alpha e_1 + \beta e_2 ) = \alpha  f(e_1) + \beta f(e_2),$$ for all $\alpha, \beta \in \mathbb{C}$ with $|\alpha|^2 +|\beta|^2 =1$;
\item If $f(i e_1) = - i f(e_1)$, then $$f( \alpha e_1 + \beta e_2 ) = \overline{\alpha}  f(e_1) + \overline{\beta} f(e_2),$$ for all $\alpha, \beta \in \mathbb{C}$ with $|\alpha|^2 +|\beta|^2 =1$.
\end{enumerate}
\end{proposition}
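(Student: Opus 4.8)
The plan is to show that $w:=\alpha e_1+\beta e_2$ is a minimal tripotent, to pin down its image $f(w)$ using the distance formula of Proposition \ref{p distance between minimal tripotents}, and then to remove an ``error term'' by a Peirce-arithmetic argument. I will describe case $(a)$; case $(b)$ goes through verbatim after conjugating the scalars, since replacing $f(ie_1)=if(e_1)$ by $f(ie_1)=-if(e_1)$ only reverses the sign of the imaginary parts computed below. First I would fix the setup. Since $e_1\top e_2$, the subtriple $J$ they generate is isometrically triple-isomorphic to $M_{1,2}(\mathbb{C})$ (cf. \eqref{eq subtriples generated by two min trip}), and every norm-one element $w=\alpha e_1+\beta e_2$ is a minimal tripotent of $C$: one checks $\J www=w$ by a direct Peirce computation, and its rank-one property in $C$ (not merely in $J$) follows from the structure theory of Cartan factors. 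Hence, by Proposition \ref{p surjective isometries between the spheres preserve finite rank tripotents} and Theorem \ref{t Tingley antipodes for finite rank new}, $w':=f(w)$ is a minimal tripotent of $C^\prime$. I set $u:=f(e_1)$, $v:=f(e_2)$; by Proposition \ref{p collinear are mapped into collinear} we have $u\top v$, and by Lemma \ref{l i times minimal trip} together with Proposition \ref{p quadrangles and trangles}$(c)$ the hypothesis $f(ie_1)=iu$ forces $f(ie_2)=iv$.

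Next I would locate $w'$. Working inside $J\cong M_{1,2}(\mathbb{C})$ one computes $\|w\mp e_1\|^2=2(1\mp\Re\hbox{e}\,\alpha)$ and $\|w-ie_1\|^2=2(1-\Im\hbox{m}\,\alpha)$, and the analogous identities with $e_1,\alpha$ replaced by $e_2,\beta$. Using that $f$ is an isometry, together with $f(-e_1)=-u$ and $f(ie_1)=iu$ (Theorem \ref{t Tingley antipodes for finite rank new}), these distances are preserved, so applying the distance formula \eqref{eq Fla distance between minimal trip} to the pairs $(w',\pm u)$ and $(w',iu)$ produces a small system in the unknowns $\Re\hbox{e}\,\phi_u(w')$, $\Im\hbox{m}\,\phi_u(w')$ and $\|P_0(u)(w')\|$. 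Solving it yields $\phi_u(w')=\alpha$ and $P_0(u)(w')=0$, and symmetrically $\phi_v(w')=\beta$ and $P_0(v)(w')=0$.

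Then I would perform a Peirce reduction. Put $y:=w'-\alpha u-\beta v$. Since $v\in C^\prime_1(u)$ and $u\in C^\prime_1(v)$, the Peirce rules give $P_2(u)v=P_0(u)v=0$ and $P_2(v)u=P_0(v)u=0$; combined with the values just obtained, $P_2(u)(y)=P_0(u)(y)=0$ and $P_2(v)(y)=P_0(v)(y)=0$, so $y\in C^\prime_1(u)\cap C^\prime_1(v)$. Finally I would eliminate $y$. Let $p:=\alpha u+\beta v$, a minimal tripotent with $\J ppp=p$. The key point is that the cross terms vanish: by \eqref{eq peirce rules2} and $u\in C^\prime_1(v)$ one has $\J uvy\in C^\prime_2(u)\cap C^\prime_0(v)=\mathbb{C}u\cap C^\prime_0(v)=\{0\}$, and likewise $\J vuy=0$; since $\J uuy=\J vvy=\frac12 y$ this gives $\J ppy=\frac12(|\alpha|^2+|\beta|^2)y=\frac12 y$, i.e. $y\in C^\prime_1(p)$. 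Expanding the tripotent identity $\J{w'}{w'}{w'}=w'=p+y$ and cancelling $p=\J ppp$ and $2\J ppy=y$, the residual terms $\J pyp$, $2\J pyy$, $\J ypy$ and $\J yyy$ lie, by \eqref{eq peirce rules1}--\eqref{eq peirce rules2} relative to $p$, in $\{0\}$, $C^\prime_2(p)$, $C^\prime_0(p)$ and $C^\prime_1(p)$ respectively; occupying pairwise distinct Peirce eigenspaces they must vanish separately, so in particular $\J yyy=0$, whence $\|y\|^3=\|\J yyy\|=0$ and $y=0$ by (JB$^*$3). Therefore $w'=\alpha u+\beta v=\alpha f(e_1)+\beta f(e_2)$, as claimed.

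The hard part will be this last elimination of $y$: one must first certify that the genuinely new component $y$ sits in the Peirce-one space of $p$ (which is precisely what annihilates the quadrangle-type cross terms $\J uvy$ and $\J vuy$), and then exploit that the four residual cubic terms occupy distinct Peirce eigenspaces of $p$, so that (JB$^*$3) can be triggered. Secondary points requiring care are the clean resolution of the distance system in the second step and the standard-but-not-purely-formal fact that $\alpha e_1+\beta e_2$ is minimal in $C$ rather than only in $J$.
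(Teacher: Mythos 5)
Your proposal is correct and follows essentially the same route as the paper's proof: both use Theorem \ref{t Tingley antipodes for finite rank new}, Proposition \ref{p quadrangles and trangles}$(c)$ and the distance formula of Proposition \ref{p distance between minimal tripotents} (applied to $\pm f(e_1)$, $i f(e_1)$ and their $e_2$-analogues) to conclude that $P_2(f(e_j))(f(w))$ equals $\alpha f(e_1)$, respectively $\beta f(e_2)$, while $P_0(f(e_j))(f(w))=0$, and then observe that the remaining component lies in $C'_1(\alpha f(e_1)+\beta f(e_2))$ and must vanish. The only differences are cosmetic: the paper runs the real/imaginary-part computation through a case analysis on the subtriple generated by $f(e_1)$ and $f(w)$ and kills the Peirce-one residual by citing \cite[Lemma 1.6]{FriRu85}, whereas you work abstractly with $\phi_{f(e_1)}$ and finish with a self-contained expansion of $\J{f(w)}{f(w)}{f(w)}=f(w)$ combined with the Peirce grading and (JB$^*$3) --- both of which are valid.
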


\begin{proof} Let us fix $\alpha, \beta \in \mathbb{C}$ with $|\alpha|^2 +|\beta|^2 =1$. We can assume $\alpha,\beta\neq 0$. Proposition \ref{p surjective isometries between the spheres preserve finite rank tripotents} assures that $f(e_1),$ $f(e_2)$, and $f( \alpha e_1 + \beta e_2 ) $ are minimal tripotents in $C^\prime$. Let $J$ denote the JB$^*$-subtriple of $C^\prime$ generated by $f(e_1)$ and $f( \alpha e_1 + \beta e_2 ) $. We have already commented that $J$ identifies with one of the following $\mathbb{C}, $ $\mathbb{C}\oplus^{\infty} \mathbb{C},$ $M_{1,2} (\mathbb{C}),$ $M_{2} (\mathbb{C})$ and $S_2 (\mathbb{C})$ (see \cite[Proposition 5]{FriRu85}).\smallskip

If $J=\mathbb{C}$, we have $f( \alpha e_1 + \beta e_2 ) = \lambda f(e_1)$ for a suitable complex $\lambda$ with $|\lambda|=1$. By Lemma \ref{l i times minimal trip} we have $\lambda f(e_1) =  f(\lambda e_1)$ or $\lambda f(e_1) =  f(\overline{\lambda} e_1)$. Therefore, $\alpha e_1 + \beta e_2 =\lambda e_1$ or $\alpha e_1 + \beta e_2 =\overline{\lambda} e_1$, and both equalities are impossible.\smallskip

If $J= \mathbb{C}\oplus^{\infty} \mathbb{C},$ we can assume $f(e_1) =(1,0)$ and $f( \alpha e_1 + \beta e_2 ) = (\lambda,0)$ or $f( \alpha e_1 + \beta e_2 ) = (0,\lambda)$ with $|\lambda|=1$. In the first case, Lemma \ref{l i times minimal trip} gives $\alpha e_1 + \beta e_2 =\lambda e_1$ or $\alpha e_1 + \beta e_2 =\overline{\lambda} e_1$, which is impossible, while in the second case, by Theorem \ref{thm Tyngley co sums with more than one element new}, we have $\alpha e_1 + \beta e_2 \perp e_1$, which is impossible too.\smallskip

In the remaining cases, we can assume $J\subseteq M_{2} (\mathbb{C})$, $f(e_1) = \left(
                                                                                         \begin{array}{cc}
                                                                                           1 & 0 \\
                                                                                           0 & 0 \\
                                                                                         \end{array}
                                                                                       \right)
$ and $f( \alpha e_1 + \beta e_2 ) = \left(
                                       \begin{array}{cc}
                                         \alpha^\prime & \beta^\prime \\
                                         \gamma^\prime & \delta^\prime \\
                                       \end{array}
                                     \right)
$ with $|\alpha^\prime|^2 +| \beta^\prime|^2 + |\gamma^\prime |^2 + |\delta^\prime|^2 =1$ and $\alpha^\prime  \delta^\prime  = \beta^\prime \gamma^\prime$. Applying Proposition \ref{p distance between minimal tripotents} and the properties of $f$ we get:
$$2(1\pm \Re\hbox{e}(\alpha)) = \| e_1 \pm ( \alpha e_1 + \beta e_2 )\|^2 = \| f(e_1) \pm f( \alpha e_1 + \beta e_2 )\|^2 $$ $$ = {(1 \pm \Re\hbox{e}(\alpha^\prime))+ \sqrt{(1 \pm  \Re\hbox{e} (\alpha^\prime))^2 -  |\delta^\prime |^2}};$$ $$ 4 (1\pm \Re\hbox{e}(\alpha))^2 + (1 \pm \Re\hbox{e}(\alpha^\prime))^2- 4(1\pm \Re\hbox{e}(\alpha)) (1 \pm \Re\hbox{e}(\alpha^\prime)) = (1 \pm  \Re\hbox{e} (\alpha^\prime))^2 -  |\delta^\prime |^2 ;$$ $$\pm  4 (1\pm \Re\hbox{e}(\alpha)) (\Re\hbox{e}(\alpha^\prime)-  \Re\hbox{e}(\alpha)) = -  |\delta^\prime |^2 .$$ Therefore, $\delta^\prime =0$, and since $|\alpha|\neq 1$ we also deduce that $\Re\hbox{e}(\alpha^\prime)= \Re\hbox{e}(\alpha)$. Therefore, $P_0 (f(e_1)) (f( \alpha e_1 + \beta e_2 ))=0$ and $P^{1} (f(e_1)) (f( \alpha e_1 + \beta e_2 )) = \Re\hbox{e}(\alpha) f(e_1)$.\smallskip

By Lemma \ref{l i times minimal trip} we have $ f(i e_1)= i f(e_1)$ or $ f( i e_1) =  -i f(e_1)$. We shall distinguish these two cases.\smallskip

\emph{Case $a)$} $ f(i e_1)= i f(e_1)$. By Proposition \ref{p distance between minimal tripotents}, Theorem \ref{t Tingley antipodes for finite rank new} and the hypothesis we get
$$2(1\pm \Re\hbox{e}(i \alpha))  = \|e_1 \pm i  (\alpha e_1 + \beta e_2) \|^2 = \|i e_1 \mp   (\alpha e_1 + \beta e_2) \|^2 = \|f(i e_1) \mp   f (\alpha e_1 + \beta e_2) \|^2 $$ $$=\left\| \left(
                                                                                         \begin{array}{cc}
                                                                                           i & 0 \\
                                                                                           0 & 0 \\
                                                                                         \end{array}
                                                                                       \right) \mp \left(
                                       \begin{array}{cc}
                                         \alpha^\prime & \beta^\prime \\
                                         \gamma^\prime & 0 \\
                                       \end{array}
                                     \right)\right\|^2=\left\| \left(
                                                                                         \begin{array}{cc}
                                                                                           1 & 0 \\
                                                                                           0 & 0 \\
                                                                                         \end{array}
                                                                                       \right) \pm \left(
                                       \begin{array}{cc}
                                         i \alpha^\prime & i \beta^\prime \\
                                         i \gamma^\prime & 0 \\
                                       \end{array}
                                     \right)\right\|^2 = 2(1\pm \Re\hbox{e}(i \alpha^\prime)),$$
which shows that $\alpha =\alpha^\prime$, and hence \begin{equation}\label{eq 1 1010}f( \alpha e_1 + \beta e_2 ) = \alpha f(e_1) + P_1(f(e_1)) (f( \alpha e_1 + \beta e_2 )).
\end{equation}\smallskip

Since $ f(i e_1)= i f(e_1)$ and $e_1\top e_2$, Proposition \ref{p quadrangles and trangles}$(c)$ implies $f(i e_2) = i f(e_2)$. Thus, repeating the above arguments with $e_2$ in the role of $e_1$ we get \begin{equation}\label{eq 2 1010}f(\alpha e_1 + \beta e_2 ) = \beta f(e_2) + P_1(f(e_2)) (f( \alpha e_1 + \beta e_2 )).
\end{equation} By combining \eqref{eq 1 1010} and \eqref{eq 2 1010} we get $$  f(\alpha e_1 + \beta e_2 ) = \alpha f(e_1)  + \beta f(e_2) +  P_1(f(e_2)) P_1(f(e_1)) (f( \alpha e_1 + \beta e_2 )).$$ Since $f(e_1)$ and $f(e_2)$ are collinear minimal tripotents (compare  Proposition \ref{p collinear are mapped into collinear}), $\alpha f(e_1)  + \beta f(e_2)$ is a minimal tripotent in $C^\prime$ (compare \cite[LEMMA in page 306]{DanFri87}). The element $f(\alpha e_1 + \beta e_2 )$ lies in the unit sphere of $C^\prime$ and, by Peirce arithmetic $P_1(f(e_2)) P_1(f(e_1)) (f( \alpha e_1 + \beta e_2 ))\in C_1^{\prime} (\alpha f(e_1)  + \beta f(e_2)),$ and thus  $$P_2 (\alpha f(e_1)  + \beta f(e_2)) (f(\alpha e_1 + \beta e_2 )) = \alpha f(e_1)  + \beta f(e_2).$$ Lemma 1.6 in \cite{FriRu85} proves $f(\alpha e_1 + \beta e_2 ) = \alpha f(e_1)  + \beta f(e_2)$.\smallskip

In the \emph{case $b)$} $ f(i e_1)=- i f(e_1)$, the above arguments prove $f(\alpha e_1 + \beta e_2 ) = \overline{\alpha} f(e_1)  + \overline{\beta} f(e_2)$.
\end{proof}

Let $C$ be a Cartan factor with rank greater or equal than 2. Let $e_1$ be a minimal tripotent in $C$. The Peirce subspace $C_1(e_1)$ cannot be zero, otherwise $C=C_2(e_1)\oplus^{\perp} C_0(e_1)$ would be the direct sum of two orthogonal weak$^*$-closed triple ideals, which is impossible. Applying \cite[Corollary 2.2 and Proposition 2.1]{DanFri87} one of the following statements holds:\begin{enumerate}[$(i)$]\item There exists a minimal tripotent $v$ in $C$ satisfying $e_1\top v$;
\item There exist a rank 2 tripotent $u$ and a minimal tripotent ${\tilde e}_1$ in $C$ such that $(e_1, u, {\tilde e}_1)$ is a trangle;
\item There exist minimal tripotents $v_2,v_3,v_4$ in $C$ such that $(e_1,v_2,v_3,v_4)$ is a quadrangle.
\end{enumerate}

In case $(i)$, we can repeat the argument in the proof of Proposition \ref{p quadrangles and trangles}$(c)$ to deduce, via Lemma \ref{l technical headache}, the existence of minimal tripotents $v_2,v_3,v_4$ in $C$ such that $(e_1,v_2,v_3,v_4)$ is a quadrangle. Therefore, for each minimal tripotent $e_1$ in $C$ one of the following holds: \begin{enumerate}[$(\checkmark.1)$]\label{eq minimal in trangle or quadrangle}
\item There exist a rank 2 tripotent $u$ and a minimal tripotent ${\tilde e}_1$ in $C$ such that $(e_1, u, {\tilde e}_1)$ is a trangle;
\item There exist minimal tripotents $v_2,v_3,v_4$ in $C$ such that $(e_1,v_2,v_3,v_4)$ is a quadrangle.
\end{enumerate}

Let $e_2$ be a minimal tripotent with $e_1\perp e_2$. In each one of the previous cases, by \cite[Proposition 5.8]{Ka97}, there exists a complex linear, isometric, JB$^*$-triple isomorphism $T: C\to C$ such that \begin{enumerate}[$(b.1)$]\label{eq minimals in trangle or quadrangle}
\item $T(e_1)=e_1$ and $T({\tilde e}_1) = e_2$;
\item $T(e_1)=e_1$ and $T(v_3) = e_2$.
\end{enumerate} Since $T$ preserves quadrangles and trangles of the previous form, we can always conclude that one of the following statements is true:\begin{enumerate}[$(c.1)$]\item There exist a rank 2 tripotent $u$ in $C$ such that $(e_1, u, e_2)$ is a trangle;
\item There exist minimal tripotents $v_2,v_4$ in $C$ such that $(e_1,v_2,e_2,v_4)$ is a quadrangle.
\end{enumerate} The following corollary is therefore a consequence of the previous arguments $(c.1)$ and $(c.2)$ and Proposition \ref{p quadrangles and trangles}$(b)$ and $(e)$.

\begin{corollary}\label{c linearity or anti-linearity for all minimal orthogonal} Let $f: S(C) \to S(C^{\prime})$ be a surjective isometry between elementary JB$^*$-triples with rank$(C)\geq 2$, and let $e_1$ and $e_2$ be minimal tripotents in $C$ with $e_1\perp e_2$. Suppose $f(i e_1) = i f(e_1)$ {\rm(}respectively, $f(i e_1) = - i f(e_1)${\rm)}, then $f(i e_2) = i f(e_2)$ {\rm(}respectively, $f(i e_2) = - i f(e_2)${\rm)}.$\hfill\Box$
\end{corollary}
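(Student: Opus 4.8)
The plan is to reduce the corollary directly to the structural dichotomy just established in the paragraphs preceding it, together with the relevant parts of Proposition \ref{p quadrangles and trangles}. The whole point of the discussion culminating in statements $(c.1)$ and $(c.2)$ is that, given a minimal tripotent $e_1$ and any minimal tripotent $e_2$ orthogonal to it, one may use the JB$^*$-triple automorphism $T$ provided by \cite[Proposition 5.8]{Ka97} to normalize the configuration so that $e_1$ and $e_2$ sit inside a single trangle $(e_1,u,e_2)$ or a single quadrangle $(e_1,v_2,e_2,v_4)$. Thus the hypothesis $f(i e_1)=i f(e_1)$ (respectively $-i f(e_1)$) is to be propagated to $e_2$ along whichever of these two rigid configurations contains both tripotents.

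First I would invoke the dichotomy: by the argument recorded just before the corollary, using \cite[Corollary 2.2 and Proposition 2.1]{DanFri87} to rule out the pure-collinearity case via Lemma \ref{l technical headache}, every minimal tripotent $e_1$ in $C$ lies either in a trangle $(e_1,u,\tilde e_1)$ with $u$ of rank $2$ and $\tilde e_1$ minimal, or in a quadrangle $(e_1,v_2,v_3,v_4)$ of minimal tripotents. Then, given the prescribed $e_2\perp e_1$, I would apply the homogeneity result \cite[Proposition 5.8]{Ka97} to obtain a complex-linear isometric triple automorphism $T$ of $C$ fixing $e_1$ and sending $\tilde e_1\mapsto e_2$ (in the trangle case) or $v_3\mapsto e_2$ (in the quadrangle case); since $T$ preserves the triple product, it carries trangles to trangles and quadrangles to quadrangles, yielding either a trangle $(e_1,u,e_2)$ or a quadrangle $(e_1,v_2,e_2,v_4)$ as asserted in $(c.1)$ and $(c.2)$.

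With the configuration in hand, the conclusion is immediate. In case $(c.2)$, I apply Proposition \ref{p quadrangles and trangles}$(b)$ to the quadrangle $(e_1,v_2,e_2,v_4)$: the hypothesis $f(i e_1)=i f(e_1)$ forces $f(i u_j)=i f(u_j)$ for the three remaining vertices, in particular for $e_2$, so $f(i e_2)=i f(e_2)$. In case $(c.1)$, I apply Proposition \ref{p quadrangles and trangles}$(e)$ to the trangle $(e_1,u,e_2)$: from $f(i e_1)=i f(e_1)$ it yields $f(i e_2)=i f(e_2)$ (and also the matching statement for the governing tripotent $u$, though only the conclusion about $e_2$ is needed here). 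The parenthetical case $f(i e_1)=-i f(e_1)$ is handled verbatim by the respective minus-sign branches of parts $(b)$ and $(e)$.

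The main obstacle is not the final deduction, which is a clean case split, but rather making sure the reduction to a \emph{common} trangle or quadrangle containing both $e_1$ and $e_2$ is legitimate. The delicate point is that the dichotomy $(\checkmark.1)$/$(\checkmark.2)$ only guarantees \emph{some} trangle or quadrangle through $e_1$, with no control over where $e_2$ sits; it is precisely the transitivity of the automorphism group on suitably normalized configurations, encoded in \cite[Proposition 5.8]{Ka97}, that lets me move the generic vertex onto the given $e_2$ while pinning $e_1$. Once one is confident that $T$ exists and respects the triple structure (hence maps minimal tripotents to minimal tripotents and preserves collinearity, orthogonality, and governing), the rest is a direct appeal to Proposition \ref{p quadrangles and trangles}, and indeed the text presents the corollary as an immediate consequence of $(c.1)$, $(c.2)$, and parts $(b)$ and $(e)$.
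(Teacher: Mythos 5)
Your proof is correct and follows essentially the same route as the paper: the text derives the corollary precisely from the normalization $(c.1)$/$(c.2)$ (placing $e_1$ and $e_2$ in a common trangle or quadrangle via \cite[Proposition 5.8]{Ka97}) together with Proposition \ref{p quadrangles and trangles}$(b)$ and $(e)$. Your write-up merely makes explicit the reduction the paper leaves implicit, so there is nothing to add.
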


Our next lemma will also follow from the comments prior to Corollary \ref{c linearity or anti-linearity for all minimal orthogonal} and \cite[Proposition 5]{FriRu85}.

\begin{lemma}\label{l FR Prop 5 with a grid} Let $e$ and $v$ be two minimal tripotents in a Cartan factor of rank greater or equal than two. Then one of the following statements holds:\begin{enumerate}[$(a)$]\item There exist minimal tripotents $v_2,v_3,v_4$ in $C$, and complex numbers $\alpha$, $\beta$, $\gamma$, $\delta$ such that $(e,v_2,v_3,v_4)$ is a quadrangle, $|\alpha|^2 +| \beta|^2 + |\gamma|^2 + |\delta|^2 =1$, $\alpha \delta  = \beta \gamma$, and $v = \alpha e + \beta v_2 + \gamma v_4 + \delta v_3$;
\item There exist a minimal tripotent $\tilde e\in C$, a rank two tripotent $u\in C$, and complex numbers $\alpha, \beta, \delta$ such that $(e, u,\tilde e)$ is a trangle, $|\alpha|^2 +2 | \beta|^2 + |\delta|^2 =1$, $\alpha \delta  = \beta^2$, and $v = \alpha e+ \beta u +\delta \tilde e$.
\end{enumerate}
\end{lemma}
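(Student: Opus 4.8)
The plan is to invoke the complete classification of the JB$^*$-subtriple $J$ generated by the two minimal tripotents $e$ and $v$, which by \cite[Lemma 2.3 and Proposition 5]{FriRu85} must be isometrically triple-isomorphic to one of $\mathbb{C}$, $\mathbb{C}\oplus^{\infty}\mathbb{C}$, $M_{1,2}(\mathbb{C})$, $M_2(\mathbb{C})$ or $S_2(\mathbb{C})$, as recorded in \eqref{eq subtriples generated by two min trip}. The strategy is to handle these cases and, in each, to exhibit $v$ as the stated linear combination of the legs of a quadrangle or a trangle. The principal technical input is that, by the discussion immediately preceding Corollary \ref{c linearity or anti-linearity for all minimal orthogonal}, the minimal tripotent $e$ in a Cartan factor $C$ of rank $\geq 2$ always sits either as the first vertex of a trangle $(e,u,\tilde e)$ (case $(\checkmark.1)$) or as the first vertex of a quadrangle $(e,v_2,v_3,v_4)$ (case $(\checkmark.2)$). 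This furnishes, once and for all, the ambient grid into whose span we must fit $v$.

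First I would dispose of the degenerate cases. If $J=\mathbb{C}$ then $v=\lambda e$ with $|\lambda|=1$; if $J=\mathbb{C}\oplus^{\infty}\mathbb{C}$ then $e\perp v$. In either situation one simply completes $e$ to a grid of the appropriate type using case $(\checkmark.1)$ or $(\checkmark.2)$ above and reads off the coefficients, taking $v_2=v_3=v_4=0$-coefficients as needed so that $v$ appears either as $\alpha e$ (so $\beta=\gamma=\delta=0$) or, in the orthogonal case, as a single coefficient on a vertex orthogonal to $e$. The relations $\alpha\delta=\beta\gamma$ (respectively $\alpha\delta=\beta^2$) are then trivially satisfied.

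The substantive cases are $J=M_2(\mathbb{C})$ and $J=S_2(\mathbb{C})$, together with $J=M_{1,2}(\mathbb{C})$. Here the idea is that $J$ already contains a natural grid: writing $e$ as a rank-one matrix unit and using Proposition \ref{p distance between minimal tripotents} to read off $\phi_e(v)$ and $\|P_0(e)(v)\|$, one recognizes that $v$ lies in the span of $e$, a collinear partner, and (in the $M_2(\mathbb{C})$ case) a quadrangle completion. Concretely, for $J=M_2(\mathbb{C})$ one obtains a quadrangle $(e,v_2,v_3,v_4)$ inside $J$ with $v=\alpha e+\beta v_2+\gamma v_4+\delta v_3$, where the matrix condition $\alpha\delta=\beta\gamma$ is exactly the rank-one (minimality) constraint on $v$ that already appeared in the proof of Proposition \ref{p distance between minimal tripotents}. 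For $J=S_2(\mathbb{C})$, the symmetry of the matrices forces $v_2=v_4$, so the quadrangle collapses to a trangle $(e,u,\tilde e)$ and the constraint degenerates to $\alpha\delta=\beta^2$ with the coefficient normalization $|\alpha|^2+2|\beta|^2+|\delta|^2=1$, giving alternative $(b)$. The case $J=M_{1,2}(\mathbb{C})$ yields $v=\alpha e+\beta v_2$ with $P_0(e)(v)=0$, which fits alternative $(a)$ with $\gamma=\delta=0$ and $\alpha\delta=\beta\gamma=0$.

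The main obstacle I anticipate is not any single computation but the bookkeeping needed to globalize the grid: the grid produced inside the abstract subtriple $J$ must be shown to be one of the two canonical shapes $(\checkmark.1)$ or $(\checkmark.2)$ available for $e$ in the full Cartan factor $C$, and one must verify that the coefficients transport correctly under the identification. This is precisely where I would lean on the transitivity furnished by \cite[Proposition 5.8]{Ka97} and on Proposition \ref{p quadrangles and trangles}, exactly as invoked in the comments preceding Corollary \ref{c linearity or anti-linearity for all minimal orthogonal}, to align the $J$-internal grid with an ambient quadrangle or trangle based at $e$. Once that alignment is in place, the coefficient identities $\alpha\delta=\beta\gamma$ and $\alpha\delta=\beta^2$ follow from the minimality (rank-one) condition on $v$ together with the Peirce arithmetic of the grid, and the normalizations are immediate from $\|v\|=1$.
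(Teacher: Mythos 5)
Your proposal is correct and follows essentially the same route as the paper: classify the subtriple $J$ generated by $e$ and $v$ via \cite[Proposition 5]{FriRu85}, dispose of the cases $J=\mathbb{C}$ and $J=\mathbb{C}\oplus^{\infty}\mathbb{C}$ using the ambient alternatives $(\checkmark.1)$--$(\checkmark.2)$ and $(c.1)$--$(c.2)$, and in the cases $J=M_{1,2}(\mathbb{C})$, $M_2(\mathbb{C})$, $S_2(\mathbb{C})$ read the quadrangle or trangle and the coefficients directly off the matrix realization. The only cosmetic difference is that the paper does not need the transitivity of \cite[Proposition 5.8]{Ka97} in the substantive cases (the grid inside $J$ is already a quadrangle/trangle of minimal tripotents of $C$ by Lemma \ref{l collinearity}, with Lemma \ref{l technical headache} and \cite[Proposition 1.7]{DanFri87} supplying the missing vertices in the $M_{1,2}(\mathbb{C})$ case), so your anticipated ``globalization'' obstacle is lighter than you fear.
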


\begin{proof} Let $J$ denote the JB$^*$-subtriple of $C$ generated by $e$ and $v$. We have repeatedly applied that $J$ identifies isomorphically with one of the following list: $\mathbb{C}, $ $\mathbb{C}\oplus^{\infty} \mathbb{C},$ $M_{1,2} (\mathbb{C}),$ $M_{2} (\mathbb{C})$ and $S_2 (\mathbb{C})$ (see \cite[Proposition 5]{FriRu85}).\smallskip

Suppose $J=\mathbb{C}$. Clearly $v=\lambda e$ for a suitable complex number $\lambda$ with $|\lambda|=1.$ By $(\checkmark.1)$ and $(\checkmark.2)$, or there exist a rank 2 tripotent $u$ and a minimal tripotent ${\tilde e}$ in $C$ such that $(e, u, {\tilde e})$ is a trangle, or there exist minimal tripotents $v_2,v_3,v_4$ in $C$ such that $(e,v_2,v_3,v_4)$ is a quadrangle. So, the desired conclusion holds with $\alpha =\lambda$, $\beta=\gamma=\delta =0$ (where $\gamma = \beta$ in the case of a trangle).\smallskip

In the case  $J= \mathbb{C}\oplus^{\infty} \mathbb{C},$ we can assume that $e=(1,0)$ and $v=(\lambda,0)$ or $v = (0,\lambda)$ with $|\lambda|=1$. The case $v=(\lambda,0)$ was treated in the previous paragraph. For the second choice, we observe that  $v\perp e$, and hence the statement follows from $(c.1)$ and $(c.2)$ with $\delta = 1,$ $e_1=e,$ $e_2 =v$ and $\alpha = \beta= \gamma=0$ (where $\gamma = \beta$ in the case of a trangle).\smallskip

If $J= M_{1,2} (\mathbb{C})$. We can obviously find a minimal tripotent $v_2\in C$ such that $e\top v_2$ and complex numbers $\alpha,\beta$ satisfying $v = \alpha e +\beta v_2$ and $|\alpha|^2 + | \beta|^2 =1$. Let us take, via Lemma \ref{l technical headache}, a minimal tripotent $v_3$ in $C$ such that $v_3\perp e$ and $v_2 \top v_3$. Setting $v_4 = 2 \{e,v_2,v_3\}$ we define a quadrangle $(e,v_2,v_3,v_4)$ (see \cite[Proposition 1.7]{DanFri87}). The statement $(b)$ holds with $\alpha,\beta$, $\gamma= \delta =0$.\smallskip

We deal now with the remaining cases. There is no loss of generality in assuming $J\subseteq M_{2} (\mathbb{C})$, $e = \left(
                                                                                         \begin{array}{cc}
                                                                                           1 & 0 \\
                                                                                           0 & 0 \\
                                                                                         \end{array}
                                                                                       \right)
$ and $v = \left(
                                       \begin{array}{cc}
                                         \alpha  & \beta  \\
                                         \gamma  & \delta  \\
                                       \end{array}
                                     \right)
$ with $|\alpha |^2 +| \beta |^2 + |\gamma  |^2 + |\delta |^2 =1$ and $\alpha \delta  = \beta \gamma$ (with $\beta=\gamma$ in case $J= S_2 (\mathbb{C})$). We conclude by taking $v_2= \left(
                                                                                         \begin{array}{cc}
                                                                                           0 & 1 \\
                                                                                           0 & 0 \\
                                                                                         \end{array}
                                                                                       \right),$ $v_3 = \left(
                                                                                         \begin{array}{cc}
                                                                                           0 & 0 \\
                                                                                           0 & 1 \\
                                                                                         \end{array}
                                                                                       \right),$ and $v_4= \left(
                                                                                         \begin{array}{cc}
                                                                                           0 & 0 \\
                                                                                           1 & 0 \\
                                                                                         \end{array}
                                                                                       \right)$ or $u = \left(
                                                                                         \begin{array}{cc}
                                                                                           0 & 1 \\
                                                                                           1 & 0 \\
                                                                                         \end{array}
                                                                                       \right)$ and $\tilde e = \left(
                                                                                         \begin{array}{cc}
                                                                                           0 & 0 \\
                                                                                           0 & 1 \\
                                                                                         \end{array}
                                                                                       \right),$ respectively.
\end{proof}

We recall that a spin factor is a complex Hilbert space $X$, with inner product $(.|.)$, provided
with a conjugation (i.e. a conjugate linear isometry of period 2 for the Hilbertian norm given by $\|x\|_2^2 = (x|x)$ ($x\in X$))
$x\mapsto \overline{x},$ where triple product and norm are given by
\begin{equation}\label{eq spin product}
\{x, y, z\} = (x|y)z + (z|y) x -(x|\overline{z})\overline{y},
\end{equation} and $ \|x\|^2 = (x|x) + \sqrt{(x|x)^2 -|
(x|\overline{x}) |^2},$ respectively.\smallskip

Let $X_1 =\{x\in X : x=\overline{x}\}$ and $X_2 =\{x\in X : x=-\overline{x}\}$. It is not hard to see that $X_1$ and $X_2$ are real subspaces of $X$, $X_2 = i X_1$, and $X = X_1\oplus X_2$. Since $\overline{.}$ is a conjugation we can easily see that $(x| y ) = (\overline{y} | \overline{x})$ for all $x,y\in X$. Therefore, if $x_1,y_1 \in X_1$ and $x_2,y_2\in X_2$ we have $(x_1 | x_2) = - (x_2 | x_1 ) =\overline{-(x_1 | x_2)},$ $(x_1| y_1) = (y_1| x_1) =\overline{ (x_1| y_1)},$  and $(x_2| y_2) = (y_2| x_2)= \overline{ (x_2| y_2)}.$ Therefore, $(X_j | X_j) \subseteq \mathbb{R}$ and $(x_1 | x_2)\in i \mathbb{R}.$  The underlying real Banach space $X_{\mathbb{R}}$ is a real Hilbert space with respect to the inner product $\langle x | y\rangle := \Re\hbox{e} (x| y)$. Clearly, the real subspaces $X_1$ and $X_2$ are orthogonal with respect to the inner product $\langle . | .\rangle$, that is, $\langle X_1 | X_2\rangle =0$, and $\langle x_j | y_j\rangle = (x_j | y_j),$ for every $j=1,2$.\smallskip

For $x_1\in X_1$ and $x_2\in X_2$, we have $\overline{x_1+ x_2} = x_1 -x_2$ and if $(x_1| x_2) =0$ we also have \begin{equation}\label{ eq X1 and X2 are L-sum} \| x_1 + x_2 \| = \|x_1\| + \|x_2\|.
\end{equation} It is known that every spin factor $X$ has rank two. We further known the precise form of minimal and rank two tripotents in $X$, more concretely, $$ \hbox{min } \mathcal{U} (X) =\Big\{ \frac12 (x_1+x_2) : x_i\in S(X_i), (x_1|x_2)=0 \Big\}$$ and $$ \hbox{max } \mathcal{U} (X) =\Big\{ \lambda x_1 : x_1\in S(X_1), \lambda \in S(\mathbb{C}) \Big\}.$$ Every maximal or complete tripotent in $X$ is unitary. Given a minimal tripotent $e =\frac12 (x_1+x_2) \in \hbox{min } \mathcal{U} (X) $, its Peirce-0 subspace \begin{equation}\label{eq Peirce zero in spin is one dimensional} X_0 (e) = \mathbb{C} \overline{e} = \{x\in X : x\perp e\}
\end{equation} is one-dimensional.\smallskip

Let $v= \frac12 (x_1+i x_2)$  ($x_i\in S(X_1)$, $(x_1|x_2)=0$) be a minimal tripotent in $X$. It is easy to check that $$X_2 (v) = \mathbb{C} v, \ X_0 (v) = \mathbb{C} \tilde v,\hbox{ and } X_1 (v) = \{x\in X : (x|x_1) = (x|x_2)=0\}=\{x_1,x_2\}^{\perp}_{X}.$$ We further know that \begin{equation}\label{eq Peirce projections spin} P_2 (v) (x) = 2 (x|v) v = ((x|x_1)-i (x|x_2)) v,
\end{equation} $$P_0 (v) (x) = 2 (x|\tilde v) \tilde v = ((x|x_1)+i (x|x_2)) \tilde v,$$ and  $$P_1(v) (x) = x - 2 (x|v) v- 2 (x|\tilde v) \tilde v =x -  (x|x_1) x_1- (x|x_2) x_2 \ (x\in X).$$ The projection $P_1(v)$ also coincides with the orthogonal projection of $X$ onto $\{x_1,x_2\}_{X}^{\perp}$ in the Hilbert space $(X, (.|.)).$

\begin{lemma}\label{l new Spin} Let $(v,u,\tilde v)$ be a trangle of tripotents in a Cartan factor $C$, where $v$ and $\tilde v$ are minimal. Let $w=\frac12 (v+u+\tilde v)$, $w=\frac12 (v-u+\tilde v)$, $\widehat{u}=v-\tilde v$. Suppose $\alpha, \beta, \delta$ are complex numbers with $|\alpha|^2 +2 | \beta|^2 + |\delta|^2 =1$, and $\alpha \delta  =\beta^2$. Let $x$ be an element in $C$ such that $\|x\|\leq 1$, $$P_2 (v) (x)=\alpha v,\  P_2 (\tilde v) (x)=\delta \tilde v, \ P_2 (w) (x) = \frac{\alpha + 2 \beta + \delta}{2} w,$$ $$\hbox{ and } P_2(\tilde w) (x) = \frac{\alpha - 2 \beta + \delta}{2} \tilde w.$$ Then, for the minimal tripotent $e=\alpha v +\beta u + \delta \tilde v$, we have $x = e + P_0(e) (x)$.
\end{lemma}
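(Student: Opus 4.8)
The plan is to prove that $P_2(g)(x)=e$ for the rank-two tripotent $g:=v+\tilde v=w+\tilde w$, and to deduce the statement from this. For the deduction, note that $e=\alpha v+\beta u+\delta\tilde v\in C_2(g)$ and that $C_2(e)=\mathbb{C}e\subseteq C_2(g)$, so by the compatibility \eqref{eq compatible tripotents} the projections $P_2(e)$ and $P_j(g)$ commute; since $P_2(e)P_j(g)(x)\in C_2(e)\cap C_j(g)=\{0\}$ for $j=0,1$, we get $P_2(e)(x)=P_2(e)P_2(g)(x)=P_2(e)(e)=e$, that is $\phi_e(x)=1$. As $\|\phi_e\|=1$ and $\|x\|\le 1$, this forces $\|x\|=1=\phi_e(x)$, and \eqref{eq supportd at the support} (equivalently \cite[Lemma 1.6]{FriRu85}) yields the desired $x=e+P_0(e)(x)$.

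So the real content is $P_2(g)(x)=e$. First I would identify the JBW$^*$-algebra $C_2(g)$: since $u\in C_1(v)\cap C_1(\tilde v)$ is a non-zero element connecting the orthogonal minimal tripotents $v$ and $\tilde v$, the rank-two algebra $C_2(g)$ is a factor, hence a spin factor $X$, and $v,\tilde v,w,\tilde w,e$ all lie in $X$. Writing $g=x_1\in S(X_1)$ and using the description of the minimal tripotents of a spin factor together with $v+\tilde v=w+\tilde w=g$, one obtains $v=\tfrac12(x_1+x_2)$, $\tilde v=\tfrac12(x_1-x_2)$, $w=\tfrac12(x_1+u)$, $\tilde w=\tfrac12(x_1-u)$ with $x_2=v-\tilde v$ and $u=w-\tilde w$ lying in $X_2$; the orthogonality $(x_2|u)=0$ comes from $u\in C_1(v)$ together with the spin description of $C_1(v)$ as $\{x_1,x_2\}^{\perp}$, so $\{x_1,x_2,u\}$ is orthonormal for $(\cdot|\cdot)$. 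Then $e=\tfrac{\alpha+\delta}{2}x_1+\tfrac{\alpha-\delta}{2}x_2+\beta u$, and a direct computation gives the two identities $(e|e)=\tfrac12$ and $(e|\bar e)=\alpha\delta-\beta^2=0$, the latter being exactly where the hypothesis $\alpha\delta=\beta^2$ enters.

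Now set $y:=P_2(g)(x)$, so that $\|y\|\le\|x\|\le 1$ by contractivity of the Peirce projection. Because each of $v,\tilde v,w,\tilde w$ is $\le g$, the four hypotheses on $x$ pass to $y$ (as $\phi_p=\phi_pP_2(g)$), and via $\phi_p(\cdot)=2(\cdot|p)$ from \eqref{eq Peirce projections spin} they read $(y|p)=(e|p)$ for $p\in\{v,\tilde v,w,\tilde w\}$; here one checks that $e$ itself realizes the four prescribed values, for instance $\phi_w(e)=2(e|w)=\tfrac{\alpha+2\beta+\delta}{2}$. Hence $n:=y-e$ is orthogonal to $\mathrm{span}\{x_1,x_2,u\}$ in $(\cdot|\cdot)$, and since this span is carried into itself (up to signs) by the conjugation, the same holds for $\bar n$. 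Consequently $(y|y)=\tfrac12+(n|n)$ and $(y|\bar y)=(n|\bar n)$. Feeding these into the spin-norm formula $\|y\|^2=(y|y)+\sqrt{(y|y)^2-|(y|\bar y)|^2}$ and using $\|y\|\le 1$ yields both $(n|n)\le\tfrac12$ and $|(n|\bar n)|^2\ge 2(n|n)$; combined with the Cauchy--Schwarz bound $|(n|\bar n)|\le (n|n)$ this forces $(n|n)=0$, i.e. $y=e$.

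The main obstacle is the structural step of the second paragraph: recognising that the ambient object for the whole computation is the spin factor $C_2(g)$ and pinning down the orthonormal frame $\{x_1,x_2,u\}$ adapted simultaneously to the pairs $(v,\tilde v)$ and $(w,\tilde w)$. Once this coordinatisation is in place, the remaining work is the norm estimate, whose decisive inputs are the vanishing $(e|\bar e)=0$ (which is precisely where $\alpha\delta=\beta^2$ is used) and the contractivity of $P_2(g)$ securing $\|y\|\le 1$.
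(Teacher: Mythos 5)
Your proposal is correct and follows essentially the same route as the paper: pass to the spin factor $C_2(v+\tilde v)$, check that the component of $y=P_2(v+\tilde v)(x)$ in the three-dimensional conjugation-invariant subspace spanned by $v+\tilde v$, $v-\tilde v$ and $w-\tilde w$ equals the norm-one element $e$, conclude $y=e$, and finish with \cite[Lemma 1.6]{FriRu85}. The only divergence is in the last estimate: where the paper quotes \cite[Remark 7]{JamPeSiTah2014Ceby} to get $\|P(y)\|\le\|y\|$ with equality only if $y=P(y)$, you derive the same conclusion by hand from the spin-norm formula and the Cauchy--Schwarz bound $|(n|\bar n)|\le (n|n)$, a perfectly valid self-contained substitute (for the structural fact that $C_2(v+\tilde v)$ is a spin factor you should, as the paper does, invoke \cite[Theorem 4.10]{Ka81} rather than the loose ``rank-two factor, hence spin'' argument).
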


\begin{proof} By \cite[Theorem 4·10]{Ka81} (see also \cite[Lemma 2.7]{FerMarPe}), $C_2 (v+\tilde v)$ is (isometrically isomorphic to) a spin factor. Let $X$ denote this spin factor $C_2 (v+\tilde v) = C_2(v) \oplus C_2(\tilde v) \oplus C_1(v)\cap C_1(\tilde v).$ Let us observe that $(w,\widehat{u}, \tilde w)$ is a trangle in $C$ with $w,\tilde w$ minimal. %Then $P_j (v)$ $ P_k (\tilde v) =  P_k (\tilde v) P_j (v)$ and $P_j (w) P_k (\tilde w) =  P_k (\tilde w) P_j (w)$ for every $j,k=0,1,2$.
\smallskip

Let $\overline{\ \cdot \ }$ and $(.|.)$ denote the involution and the inner product of $X$, respectively. We shall keep the notation given before this lemma.\smallskip

Since $v,\tilde v, u\in C_2(v+\tilde v) =X$ we can assume that $v= \frac12 (x_1+i x_2),$ $\tilde v= \frac12 (x_1-i x_2)=\overline{v}$, and $u = i x_3,$ where $x_i\in S(X_1)$, $(x_1|x_2)=0$, $(x_1|x_3)=0$, and $(x_2|x_3)=0$.\smallskip

Let $y = P_2(v+\tilde v) (x)$. Clearly $\|y\|\leq 1$. By hypothesis $$P_2 (v) (y)=\alpha v,\  P_2 (\tilde v) (y)=\delta \tilde v, \ P_2 (w) (y)  = \frac{\alpha + 2 \beta + \delta}{2} w,$$ $$\hbox{ and } P_2(\tilde w) (y) = \frac{\alpha - 2 \beta + \delta}{2} \tilde w.$$ Applying the identities in \eqref{eq Peirce projections spin}, we deduce from the last four equalities that \begin{equation}\label{eq coef in the Hilbert 3 dimensional x1 x2 x3} (y| x_1) =\frac{\alpha+\delta}{2}, \  (y| x_2) =i \frac{\alpha-\delta}{2}, \hbox{ and } (y| x_3) =i \beta.
\end{equation}

Let $H$ be the (complex) subspace of $X$ generated by $x_1,x_2$ and $x_3$. And let $P: X\to H$ be the orthogonal projection of the Hilbert space $(X, (.|.)$ onto $H$. Since $\overline{H} = H$, it follows from \cite[Remark 7]{JamPeSiTah2014Ceby} that $$\max\{\| P(z) \|, \|(I-P)(z)\|\} \leq \|z\|,$$ for every $z\in X$. Moreover, $\| z \| = \|P(z)\|$ if and only if $z=P(z)$.\smallskip

We have shown in \eqref{eq coef in the Hilbert 3 dimensional x1 x2 x3} that $$P (y) =\frac{\alpha+\delta}{2} x_1 + i \frac{\alpha-\delta}{2} x_2 + i \beta x_3= \alpha v +\beta u + \delta \tilde v.$$ Since, by the hypothesis on $\alpha,\beta,\delta$ we have $$1\geq \|y\| \geq \|P(y)\|= \|\alpha v +\beta u + \delta \tilde v\| = 1,$$ we conclude that $P(y) =y$, and hence $P_2(v+\tilde v) (x) =y= \alpha v +\beta u + \delta \tilde v$.\smallskip

Finally, the element $e= \alpha v +\beta u + \delta \tilde v$ is a minimal tripotent in the spin factor $X=C_2 (v+\tilde v)$ with $P_2(e) (x) = P_2 (e) (y) = e$. The conditions $1\geq \|x\|$, $P_2 (e) (x) = e$ imply, via \cite[Lemma 1.6]{FriRu85}, that $P_1 (e) (x) =0,$ and hence $x= e +P_0(e) (x)$.  \end{proof}

Our next theorem contains a key technical theorem needed for the main results of this note.\smallskip

\begin{theorem}\label{p additivity on trangles and quadrangles} Let $f: S(C) \to S(C^{\prime})$ be a surjective isometry between elementary JB$^*$-triples with rank$(C)\geq 2$. The following statements hold:\begin{enumerate}[$(a)$]\item If $(v_{1},v_{2},v_{3},v_{4})$ is a quadrangle of minimal tripotents in $C$ and $f(i v_1) = i f(v_1)$ {\rm(}respectively, $f(i v_1) = -i f(v_1)${\rm)}, then  $$f(\alpha v_1 + \beta v_2 + \gamma v_4 + \delta v_3)  = \alpha f(v_1) + \beta f(v_2) + \gamma f(v_4) + \delta f(v_3)$$ {\rm(}respectively, $$f(\alpha v_1 + \beta v_2 + \gamma v_4 + \delta v_3)  = \overline{\alpha} f(v_1) + \overline{\beta} f(v_2) + \overline{\gamma} f(v_4) + \overline{\delta} f(v_3)),$$ for all $\alpha, \beta, \gamma, \delta\in \mathbb{C}$ with $|\alpha|^2 +| \beta|^2 + |\gamma|^2 + |\delta|^2 =1$, $\alpha \delta  =\beta \gamma$;
\item If $(v, u,\tilde v)$ is a trangle, with $ v, \tilde{v} \in C$  minimal tripotents, $u\in C$ a rank two tripotent, and $f(i v) = i f(v)$ {\rm(}respectively, $f(i v) = -i f(v)${\rm)}, then $$f(\alpha v + \beta u + \delta \tilde v)  = \alpha f(v) + \beta f(u) + \delta f(\tilde v)$$ {\rm(}respectively, $$f(\alpha v + \beta u + \delta \tilde v)  = \overline{\alpha} f(v) + \overline{\beta} f(u) + \overline{\delta} f(\tilde v){\rm)},$$
    for all  $\alpha, \beta, \delta \in \mathbb{C}$ with  $|\alpha|^2 +2 | \beta|^2 + |\delta|^2 =1$, $\alpha \delta  =\beta^2$.
\end{enumerate}
\end{theorem}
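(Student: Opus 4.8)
The plan is to reduce, in each item, the value $f(e)$ (where $e$ denotes the minimal tripotent $\alpha v_1+\beta v_2+\gamma v_4+\delta v_3$ in $(a)$, respectively $\alpha v+\beta u+\delta\tilde v$ in $(b)$) to an explicit minimal tripotent in $C'$. It suffices to treat the first (complex-linear) alternative in each item, since the conjugate-linear alternatives follow by the same arguments with every coefficient replaced by its conjugate and $if(\cdot)$ replaced by $-if(\cdot)$. Throughout I would use that $f$ maps $e$ to a minimal tripotent (Proposition \ref{p surjective isometries between the spheres preserve finite rank tripotents}), that it sends the given quadrangle (respectively trangle) to one of the same type (Proposition \ref{p quadrangles and trangles}$(a)$ and $(d)$), and that, starting from $f(iv_1)=if(v_1)$ (respectively $f(iv)=if(v)$), the normalisation $f(i\,\cdot)=i f(\cdot)$ propagates along the whole configuration by Proposition \ref{p quadrangles and trangles}$(b)$ and $(e)$.

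For $(a)$ I would avoid any new distance computation and instead exploit the additivity on collinear pairs already available in Proposition \ref{p additivity on collinear min trip}. The constraints $\alpha\delta=\beta\gamma$ and $|\alpha|^2+|\beta|^2+|\gamma|^2+|\delta|^2=1$ say exactly that $e$ is a minimal tripotent of the copy of $M_2(\mathbb{C})$ generated by the quadrangle, so its singular form provides unit vectors $(p,q),(r,s)\in\mathbb{C}^2$ with $\alpha=p\overline r$, $\beta=p\overline s$, $\gamma=q\overline r$, $\delta=q\overline s$. Setting $a=p\,v_1+q\,v_4$ and $b=p\,v_2+q\,v_3$, both $a,b$ are minimal tripotents (collinear combinations with unit coefficients), a direct Peirce computation inside $M_2(\mathbb{C})$ gives $a\top b$, and $\overline r\,a+\overline s\,b=e$. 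The first step is to apply Proposition \ref{p additivity on collinear min trip}$(a)$ to the collinear pair $v_1\top v_4$, once with coefficients $(p,q)$ and once with $(ip,iq)$, obtaining simultaneously $f(a)=p\,f(v_1)+q\,f(v_4)$ and $f(ia)=if(a)$; the same proposition applied to $v_2\top v_3$ gives $f(b)=p\,f(v_2)+q\,f(v_3)$. A third application, now to $a\top b$ (legitimate precisely because $f(ia)=if(a)$) with coefficients $(\overline r,\overline s)$, then yields
\[
f(e)=\overline r\,f(a)+\overline s\,f(b)=\alpha f(v_1)+\beta f(v_2)+\gamma f(v_4)+\delta f(v_3).
\]
The degenerate cases in which one of $p,q,r,s$ vanishes collapse $e$ to a collinear combination of two of the $v_j$ and fall directly under Proposition \ref{p additivity on collinear min trip}.

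For $(b)$ the natural tool is Lemma \ref{l new Spin}, which I would apply to the trangle $(f(v),f(u),f(\tilde v))$ in $C'$ and to the norm-one element $x=f(e)$. Writing $w=\frac12(v+u+\tilde v)$ and $\tilde w=\frac12(v-u+\tilde v)$, which are orthogonal minimal tripotents with $v+\tilde v=w+\tilde w$, the task is to verify the four Peirce-$2$ conditions demanded by the lemma, namely
\[
P_2(f(v))(f(e))=\alpha f(v),\quad P_2(f(\tilde v))(f(e))=\delta f(\tilde v),\quad P_2(f(w))(f(e))=\tfrac{\alpha+2\beta+\delta}{2}f(w),\quad P_2(f(\tilde w))(f(e))=\tfrac{\alpha-2\beta+\delta}{2}f(\tilde w).
\]
Each reduces to identifying the complex scalar $\phi_{f(p)}(f(e))$ with $\phi_p(e)$ for $p\in\{v,\tilde v,w,\tilde w\}$. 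Its real part I would read off from the pair of equalities $\|f(p)\mp f(e)\|^2=\|p\mp e\|^2$ via the distance formula of Proposition \ref{p distance between minimal tripotents}, exactly as in Corollary \ref{c minimal tripotents at distance sqrt2}; its imaginary part I would extract from $\|f(ip)\mp f(e)\|^2=\|ip\mp e\|^2$ once $f(ip)=if(p)$ is known. This normalisation holds for $v$ by hypothesis, for $\tilde v$ by Proposition \ref{p quadrangles and trangles}$(e)$, and for $w,\tilde w$ it follows by combining $f(v)+f(\tilde v)=f(w)+f(\tilde w)$, the orthogonal additivity of Theorem \ref{t Tingley antipodes for finite rank new}, and Lemma \ref{l i times minimal trip}, which together force $f(iw)=if(w)$ and $f(i\tilde w)=if(\tilde w)$. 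With the four conditions in hand, Lemma \ref{l new Spin} gives $f(e)=e'+P_0(e')(f(e))$, where $e'=\alpha f(v)+\beta f(u)+\delta f(\tilde v)$ is a minimal tripotent; since $e'\perp P_0(e')(f(e))$ and $f(e)$ is itself minimal, the orthogonal summand $P_0(e')(f(e))$ is a tripotent orthogonal to $e'$ and hence vanishes, so $f(e)=e'$.

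The hardest part will be item $(b)$: the clean reduction in $(a)$ to three invocations of Proposition \ref{p additivity on collinear min trip} is comparatively soft once the singular decomposition into two collinear columns is spotted, whereas in $(b)$ one must recover the \emph{full} complex coefficients $\phi_{f(p)}(f(e))=\phi_p(e)$ by a careful double use of the distance formula (for $p$ and for $ip$), and, crucially, propagate the normalisation $f(i\,\cdot)=if(\cdot)$ to the auxiliary tripotents $w,\tilde w$ so that Lemma \ref{l new Spin} becomes applicable. In both items the conjugate-linear alternatives are obtained verbatim, replacing each coefficient by its conjugate and each $if(p)$ by $-if(p)$.
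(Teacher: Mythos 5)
Your proposal is correct. For part $(b)$ you follow essentially the same route as the paper: you pin down the four Peirce-$2$ components $P_2(f(p))(f(e))$ for $p\in\{v,\tilde v, w,\tilde w\}$ by a double application of the distance formula of Proposition \ref{p distance between minimal tripotents} (to $\|p\mp e\|$ for the real part and to $\|ip \mp e\|$ for the imaginary part, after propagating the normalisation $f(i\,\cdot)=if(\cdot)$ to $\tilde v$, $w$, $\tilde w$), and then invoke Lemma \ref{l new Spin} together with the minimality of $f(e)$; this is exactly the paper's argument. For part $(a)$, however, your argument is genuinely different from, and somewhat cleaner than, the paper's. The paper treats $(a)$ with the same distance-formula machinery as $(b)$: it invokes Lemma \ref{l FR Prop 5 with a grid} to place $f(e)$ in a quadrangle or trangle based at $f(v_1)$, solves the resulting systems of radical equations to obtain $P_2(f(v_j))(f(e)) = (\hbox{coefficient})\, f(v_j)$ for each $j$, and concludes via \cite[Lemma 1.6]{FriRu85}. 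You instead factor the minimal tripotent $e$ through its singular decomposition $\alpha = p\overline r$, $\beta=p\overline s$, $\gamma=q\overline r$, $\delta = q\overline s$, observe that $a = pv_1+qv_4$ and $b=pv_2+qv_3$ are collinear minimal tripotents with $e = \overline r\, a+\overline s\, b$, and apply Proposition \ref{p additivity on collinear min trip} three times, the auxiliary normalisation $f(ia)=if(a)$ coming for free from a further application with coefficients $(ip,iq)$. All the ingredients you need (minimality of a unimodular collinear combination, via \cite[LEMMA in page 306]{DanFri87}; the fact that $a\top b$, which is a Peirce computation internal to the subtriple spanned by the quadrangle; propagation of $f(i\,\cdot)=if(\cdot)$ along the quadrangle by Proposition \ref{p quadrangles and trangles}$(b)$) are available before this theorem in the paper, so there is no circularity. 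What your route buys in $(a)$ is the avoidance of the case analysis of Lemma \ref{l FR Prop 5 with a grid} and of the two systems of equations; what the paper's route buys is uniformity, since its computation for $(a)$ carries over almost verbatim to the trangle case $(b)$, where no analogous factorisation into collinear pieces is available and one must fall back on Lemma \ref{l new Spin} in any event.
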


\begin{proof} $(a)$ We assume $f(i v_1) = i f(v_1)$ (the case $f(i v_1) = - i f(v_1)$ follows similarly). Let $e=\alpha v_1 + \beta v_2 + \gamma v_4 + \delta v_3$. By Theorem \ref{t Tingley antipodes for finite rank new}, $f(e)$ is a minimal tripotent and $f(-e) =-f(e)$. By Lemma \ref{l FR Prop 5 with a grid} one of the following statements holds:\begin{enumerate}[$(1)$]\item There exist minimal tripotents $w_2,w_3,w_4$ in $C^{\prime}$, and complex numbers $\alpha'$, $\beta'$, $\gamma'$, $\delta'$ such that $(f(v_1),w_2,w_3,w_4)$ is a quadrangle, $|\alpha'|^2 +| \beta'|^2 + |\gamma'|^2 + |\delta'|^2 =1$, $\alpha^{\prime} \delta^{\prime}  = \beta^{\prime} \gamma^{\prime}$, and $f(e) = \alpha' f(v_1) + \beta' w_2 + \gamma^{\prime} w_4 + \delta^{\prime} w_3$;
\item There exist a minimal tripotent $\tilde v\in C^{\prime}$, a rank two tripotent $u\in C^{\prime}$, and complex numbers $\alpha^{\prime}, \beta^{\prime}, \delta^{\prime}$ such that $(f(v_1), u,\tilde v)$ is a trangle, $|\alpha^{\prime}|^2 +2 | \beta^{\prime}|^2 + |\delta^{\prime}|^2 =1$, $\alpha^{\prime} \delta^{\prime}  = (\beta^{\prime})^2$, and $f(e) = \alpha^{\prime} f(v_1)+ \beta^{\prime} u +\delta^{\prime} \tilde v$.
\end{enumerate}

We shall first deal with case $(1)$. By hypothesis $\|v_1 \pm e \| =\|f(v_1) \pm f(e) \| .$ Applying Proposition \ref{p distance between minimal tripotents} we obtain: \begin{equation}\label{eq system 0} {(1 - \Re\hbox{e} \alpha)+ \sqrt{(1 -  \Re\hbox{e} \alpha)^2 -  |\delta|^2}}= \|v_1 - e \|^2
 \end{equation} $$= \|f(v_1) - f(e) \|^2= {(1 - \Re\hbox{e} \alpha')+ \sqrt{(1 -  \Re\hbox{e} \alpha')^2 -  |\delta'|^2}}$$ and $${(1 + \Re\hbox{e} \alpha)+ \sqrt{(1 +  \Re\hbox{e} \alpha)^2 -  |\delta|^2}}= \|v_1 + e \|^2 $$ $$= \|f(v_1) + f(e) \|^2= {(1 + \Re\hbox{e} \alpha')+ \sqrt{(1 +  \Re\hbox{e} \alpha')^2 -  |\delta'|^2}},$$ that is, \begin{equation}
 \label{eq system 1} (\Re\hbox{e} \alpha' - \Re\hbox{e} \alpha) + \sqrt{(1 -  \Re\hbox{e} \alpha)^2 -  |\delta|^2}= \sqrt{(1 -  \Re\hbox{e} \alpha')^2 -  |\delta'|^2},
 \end{equation} and \begin{equation}\label{eq system 2}{(\Re\hbox{e} \alpha- \Re\hbox{e} \alpha')+ \sqrt{(1 +  \Re\hbox{e} \alpha)^2 -  |\delta|^2}}= \sqrt{(1 + \Re\hbox{e} \alpha' )^2 -  |\delta'|^2}.\end{equation} It is not hard to check that the unique solution to the system formed by \eqref{eq system 1} and \eqref{eq system 2} is $$ |\delta'| = |\delta| =0, \hbox{ or } \Re\hbox{e} \alpha = \Re\hbox{e} \alpha' \hbox{ and } |\delta'| =  |\delta|.$$ In the case $|\delta'| = |\delta| =0,$ it follows from \eqref{eq system 0} that $\Re\hbox{e} \alpha = \Re\hbox{e} \alpha'$. We have therefore shown that $$\Re\hbox{e} \alpha= \Re\hbox{e} \alpha' \hbox{ and } |\delta'| =  |\delta|.$$

Now, Proposition \ref{p distance between minimal tripotents} and the hypothesis give $${(1 \pm \Im\hbox{m} \alpha)+ \sqrt{(1 \pm  \Im\hbox{m} \alpha)^2 -  |\delta|^2}}= \|v_1 \pm i e\|^2 = \|i (v_1 \pm i e)\|^2 =  \|i v_1 \mp  e\|^2 $$ $$=  \|f(i v_1) \mp  f(e)\|^2  = \|i f( v_1) \mp  f(e)\|^2 = \|- f( v_1) \mp i   f(e)\|^2 = \| f( v_1) \pm i   f(e)\|^2$$ $$= {(1 \pm \Im\hbox{m} \alpha')+ \sqrt{(1 \pm  \Im\hbox{m} \alpha')^2 -  |\delta'|^2}}.$$ Arguing as above, we get $\Im\hbox{m} \alpha' = \Im\hbox{m} \alpha$, and hence $\alpha = \alpha'$. We have therefore proved that $\alpha = \alpha',$ and $|\delta| = |\delta'|,$ and thus \begin{equation}\label{eq 1 1710} f(e) = \alpha f(v_1) + P_1(f(v_1)) (f(e)) + P_0(f(v_1)) (f(e)),
 \end{equation} with $\| P_0(f(v_1)) (f(e))\| = \|P_0( v_1 ) (e)\| =|\delta|$.\smallskip

We consider now case $(2)$. The same arguments given in case $(1)$ lead us to \eqref{eq 1 1710}.\smallskip

Since $f(i v_1) = i f(v_1)$, Proposition \ref{p quadrangles and trangles}$(b)$ gives $f(i v_j) = i f(v_j)$, for every $j\in\{2,3,4\}$. When in previous arguments we replace $v_1$ with $v_2$, $v_4$ and $v_3$ we obtain
\begin{equation}\label{eq 2 1710}f(e) = \beta f(v_2) + P_1(f(v_2)) (f(e)) + P_0(f(v_2)) (f(e)), \end{equation} %with $\| P_0(f(v_2)) (f(e))\| = \|P_0( v_2 ) (e)\|=|\gamma|$,
\begin{equation}\label{eq 3 1710}f(e) = \delta f(v_3) + P_1(f(v_3)) (f(e)) + P_0(f(v_3)) (f(e)), \end{equation} %with $\| P_0(f(v_3)) (f(e))\| = \|P_0( v_3 ) (e)\|=|\alpha|$,
and \begin{equation}\label{eq 4 1710}f(e) = \gamma f(v_4) + P_1(f(v_4)) (f(e)) + P_0(f(v_4)) (f(e)). \end{equation} %with $\| P_0(f(v_4)) (f(e))\| = \|P_0( v_4 ) (e)\| = |\beta|$.\smallskip

Since $(f(v_1),f(v_2),f(v_3),f(v_4))$ is a quadrangle of minimal tripotents in $C^\prime$, and hence $\alpha f(v_1) + \beta f(v_2) + \gamma f(v_4) + \delta f(v_3)$ is a minimal tripotent in $C^\prime$, we deduce from \eqref{eq 1 1710}, \eqref{eq 2 1710}, \eqref{eq 3 1710} and \eqref{eq 4 1710} that $$P_2 (\alpha f(v_1) + \beta f(v_2) + \gamma f(v_4) + \delta f(v_3)) (f(e)) = \alpha f(v_1) + \beta f(v_2) + \gamma f(v_4) + \delta f(v_3),$$ and since $f(e)$ is a minimal tripotent in $C^\prime$, Lemma 1.6 in \cite{FriRu85} implies that $$f(e) = \alpha f(v_1) + \beta f(v_2) + \gamma f(v_4) + \delta f(v_3),$$ which concludes the proof of $(a)$.\smallskip

$(b)$ Let us assume that $f(i v) = i f(v)$  (the case $f(i v_1) = - i f(v_1)$ follows similarly). By Proposition \ref{p quadrangles and trangles}$(e)$ we have $f(i \tilde v) = i f(\tilde v)$ and $f(i u) = i f(u)$. Let $e$ denote $\alpha v + \beta u + \delta \tilde v$. As before, $f(e)$ is a minimal tripotent and $f(-e) =-f(e)$ (compare Theorem \ref{t Tingley antipodes for finite rank new}), and by Lemma \ref{l FR Prop 5 with a grid} one of the following statements holds:\begin{enumerate}[$(1)$]\item There exist minimal tripotents $w_2,w_3,w_4$ in $C^{\prime}$, and complex numbers $\alpha'$, $\beta'$, $\gamma'$, $\delta'$ such that $(f(v),w_2,w_3,w_4)$ is a quadrangle, $|\alpha'|^2 +| \beta'|^2 + |\gamma'|^2 + |\delta'|^2 =1$, $\alpha^{\prime} \delta^{\prime}  = \beta^{\prime} \gamma^{\prime}$, and $f(e) = \alpha' f(v) + \beta' w_2 + \gamma^{\prime} w_4 + \delta^{\prime} w_3$;
\item There exist a minimal tripotent $\tilde w\in C^{\prime}$, a rank two tripotent $u\in C^{\prime}$, and complex numbers $\alpha^{\prime}, \beta^{\prime}, \delta^{\prime}$ such that $(f(v), u,\tilde w)$ is a trangle, $|\alpha^{\prime}|^2 +2 | \beta^{\prime}|^2 + |\delta^{\prime}|^2 =1$, $\alpha^{\prime} \delta^{\prime}  = (\beta^{\prime})^2$, and $f(e) = \alpha^{\prime} f(v)+ \beta^{\prime} u +\delta^{\prime} \tilde w$.
\end{enumerate}

In case $(1)$, arguing as above we get $${(1 \mp \Re\hbox{e} \alpha)+ \sqrt{(1 \mp  \Re\hbox{e} \alpha)^2 -  |\delta|^2}}= \|v \mp e \|^2 $$ $$= \|f(v) \mp f(e) \|^2= {(1 \mp \Re\hbox{e} \alpha')+ \sqrt{(1 \mp  \Re\hbox{e} \alpha')^2 -  |\delta'|^2}},$$ from which we obtain $ \Re\hbox{e} \alpha' =  \Re\hbox{e} \alpha$.  Repeating previous arguments, we also have  $${(1 \pm \Im\hbox{m} \alpha)+ \sqrt{(1 \pm  \Im\hbox{m} \alpha)^2 -  |\delta|^2}}= \|v \pm i e\|^2 = \|i (v \pm i e)\|^2 =  \|i v \mp  e\|^2 $$ $$=  \|f(i v) \mp  f(e)\|^2  = \|i f( v) \mp  f(e)\|^2 = \|- f( v) \mp i   f(e)\|^2 = \| f( v) \pm i   f(e)\|^2$$ $$= {(1 \pm \Im\hbox{m} \alpha')+ \sqrt{(1 \pm  \Im\hbox{m} \alpha')^2 -  |\delta'|^2}},$$ and consequently $\Im\hbox{m} \alpha' = \Im\hbox{m} \alpha$, and $\alpha = \alpha'$. Therefore \begin{equation}\label{eq 1a 1710}  P_2(f(v)) (f(e)) = \alpha f(v).
 \end{equation} In case $(2)$ we also arrive to \eqref{eq 1a 1710} with similar arguments to those given above. This discussion remains valid when $v$ is replaced by $\tilde v$, and we therefore have
 \begin{equation}\label{eq 1b 1710} P_2(f(\tilde v)) (f(e)) =  \delta f(\tilde v) .
 \end{equation}

Now, we set $w = \frac12 (v+u+\tilde v)$, $\tilde w = \frac12 (v-u+\tilde v)$ and $\widehat{u} = v-\tilde v$. The triplet $(w,\widehat{u},\tilde w)$ is a trangle in $C$ and $e = \widehat{\alpha} w + \widehat{\beta} \widehat{u} + \widehat{\delta} \tilde w,$ where $\widehat{\alpha} = \frac{\alpha + 2 \beta + \delta}{2}$, $\widehat{\beta} = \frac{\alpha - \delta}{2}$, and $\widehat{\delta} = \frac{\alpha - 2 \beta + \delta}{2}.$ By the arguments given above we have \begin{equation}\label{eq 1a 2111}  P_2(f(w)) (f(e)) = \widehat{\alpha} f(w) = \frac{\alpha + 2 \beta + \delta}{2} f(w),
 \end{equation} and
\begin{equation}\label{eq 1b 2111}  P_2(f(\tilde w)) (f(e)) = \widehat{\delta} f(\tilde w) = \frac{\alpha - 2 \beta + \delta}{2} f(\tilde  w).
 \end{equation}

Having in mind \eqref{eq 1a 1710}, \eqref{eq 1b 1710}, \eqref{eq 1a 2111} and \eqref{eq 1b 2111}, and applying Lemma \ref{l new Spin} to the element $f(e)$ and the triplet $(f(v),f(u),f(\tilde v))$ (compare Proposition \ref{p quadrangles and trangles}$(d)$), we get
$$f(e) =  \alpha f(v) +\delta f(\tilde v) +\beta f(u) + P_0 ( \alpha f(v) +\delta f(\tilde v) +\beta f(u)) (f(e)),$$ and, by the minimality of $f(e)$, we deduce that $f(e) = \alpha f(v) +\delta f(\tilde v) +\beta f(u),$ as desired.
\end{proof}

\begin{corollary}\label{c linearity or anti-linearity for all minimal} Let $f: S(C) \to S(C^{\prime})$ be a surjective isometry between elementary JB$^*$-triples with rank$(C)\geq 2$. Then either  $f(i u) = i f(u)$ for every finite rank tripotent $u$ in $C$, or $f(i u) = - i f(u)$ for every finite rank tripotent $u$ in $C$.
\end{corollary}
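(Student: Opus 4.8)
The plan is to establish a \emph{dichotomy} for $f$: the sign in the relation $f(iu)=\pm i f(u)$, which Lemma \ref{l i times minimal trip} guarantees holds for each individual minimal tripotent, is in fact uniform across \emph{all} finite rank tripotents of $C$. The strategy is to first fix the sign on one minimal tripotent, propagate it to every minimal tripotent, and then extend it to arbitrary finite rank tripotents via their spectral decomposition into mutually orthogonal minimal tripotents.

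First I would fix an arbitrary minimal tripotent $e_1$ in $C$. By Lemma \ref{l i times minimal trip} exactly one of $f(ie_1)=if(e_1)$ or $f(ie_1)=-if(e_1)$ holds; say without loss of generality $f(ie_1)=if(e_1)$. The central claim to prove is that then $f(iv)=if(v)$ for \emph{every} minimal tripotent $v$ in $C$. Given any such $v$, I would invoke Lemma \ref{l FR Prop 5 with a grid}, which places $v$ inside either a quadrangle $(e_1,v_2,v_3,v_4)$ or a trangle $(e_1,u,\tilde e_1)$ built on $e_1$, expressing $v$ as a fixed linear combination of the members of that configuration with coefficients satisfying the appropriate normalization. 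Then Proposition \ref{p quadrangles and trangles}, parts $(b)$ and $(c)$ (for the quadrangle case) and part $(e)$ (for the trangle case), transfers the sign condition from $e_1$ to each tripotent appearing in the configuration, so $f(iv_j)=if(v_j)$ and $f(i\tilde e_1)=if(\tilde e_1)$ etc. The additivity furnished by Theorem \ref{p additivity on trangles and quadrangles} then yields $f(iv)=if(v)$ directly from the expansion of $v$ in the grid. This shows the sign is $+$ on \emph{every} minimal tripotent.

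With the sign pinned down on all minimal tripotents, I would handle a general finite rank tripotent $u$. By the spectral theory of weakly compact JB$^*$-triples, $u$ decomposes as a finite orthogonal sum $u=u_1+\cdots+u_m$ of mutually orthogonal minimal tripotents. Applying the same configuration-and-additivity machinery—here most cleanly via Corollary \ref{c linearity or anti-linearity for all minimal orthogonal}, which already propagates the sign across orthogonal pairs, combined with Theorem \ref{t Tingley antipodes for finite rank new} (additivity of $f$ on mutually orthogonal finite rank tripotents)—I would compute $f(iu)=f(iu_1+\cdots+iu_m)=f(iu_1)+\cdots+f(iu_m)=if(u_1)+\cdots+if(u_m)=if(u)$, using that each $f(iu_j)=if(u_j)$ by the minimal-tripotent case and that $iu_j$ are again mutually orthogonal minimal tripotents. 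The symmetric argument under the opposite initial choice gives $f(iu)=-if(u)$ uniformly, completing the dichotomy.

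The main obstacle I anticipate is \emph{connectivity}: ensuring that the sign can actually be transported from the fixed $e_1$ to an \emph{arbitrary} minimal tripotent $v$, not merely to those collinear or orthogonal to $e_1$. The resolution hinges on Lemma \ref{l FR Prop 5 with a grid}, which guarantees that \emph{any} pair $e_1,v$ of minimal tripotents sits inside a single quadrangle or trangle, so no iterated chaining through intermediate tripotents is needed—one configuration suffices. A secondary subtlety is verifying that the sign-propagation results (Proposition \ref{p quadrangles and trangles}$(b),(c),(e)$) genuinely cover every element of the relevant grid and interact correctly with the $\ell_2$-additivity of Theorem \ref{p additivity on trangles and quadrangles}; once these are in hand, the reduction from minimal to general finite rank tripotents is routine via orthogonal additivity and poses no real difficulty.
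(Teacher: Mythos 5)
Your proposal is correct and follows essentially the same route as the paper: Lemma \ref{l FR Prop 5 with a grid} places an arbitrary minimal tripotent $v$ in a single quadrangle or trangle containing the fixed $e_1$, Proposition \ref{p quadrangles and trangles} propagates the sign to the members of that configuration, Theorem \ref{p additivity on trangles and quadrangles} yields $f(iv)=if(v)$ from the expansion of $iv$, and the passage to general finite rank tripotents is exactly the paper's appeal to the orthogonal additivity in Theorem \ref{t Tingley antipodes for finite rank new}. No gaps.
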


\begin{proof}
Suppose there exists a minimal tripotent $e\in C$ such that $f(i e) = i f(e)$, and let $v$ be any other minimal tripotent in $C$. By Lemma \ref{l FR Prop 5 with a grid}  one of the following statements holds:\begin{enumerate}[$(a)$]\item There exist minimal tripotents $v_2,v_3,v_4$ in $C$, and complex numbers $\alpha$, $\beta$, $\gamma$, $\delta$ such that $(e,v_2,v_3,v_4)$ is a quadrangle, $|\alpha|^2 +| \beta|^2 + |\gamma|^2 + |\delta|^2 =1$, $\alpha \delta  = \beta \gamma$, and $v = \alpha e + \beta v_2 + \gamma v_4 + \delta v_3$, and hence $i v = i \alpha e + i \beta v_2 + i \gamma v_4 + i \delta v_3$;
\item There exist a minimal tripotent $\tilde e\in C$, a rank two tripotent $u\in C$, and complex numbers $\alpha, \beta, \delta$ such that $(e, u,\tilde e)$ is a trangle, $|\alpha|^2 +2 | \beta|^2 + |\delta|^2 =1$, $\alpha \delta  = \beta^2$, $v = \alpha e+ \beta u +\delta \tilde e$ and $i v = i \alpha e+i \beta u +i \delta \tilde e$.
\end{enumerate} Both cases will be treated independently. Proposition \ref{p quadrangles and trangles} assures that \begin{enumerate}[$(a)$]\item $f(i v_j)= i f(v_j)$, for every $j=2,3,4$;
\item $f(i u)= i f(u)$ and $f(i \tilde e)= i f(\tilde e)$.
\end{enumerate} An application of Theorem \ref{p additivity on trangles and quadrangles} proves \begin{enumerate}[$(a)$]\item $f(i v) =  i \alpha f(v_1) + i \beta f(v_2) + i \gamma f(v_4) +i  \delta f(v_3) = i f(v)$;
\item $f(iv ) = i {\alpha} f(e) + i {\beta} f(u) + i \delta f(\tilde e) = i f(v)$.
\end{enumerate}

The final statement is a consequence of the first conclusion and Theorem \ref{t Tingley antipodes for finite rank new}.
\end{proof}

Before finishing this section, we shall present another refinement of the Triple System Analyzer \cite[Proposition 2.1 $(iii)$]{DanFri87}.

\begin{lemma}\label{l preservation of the Peirce 1 subspace} Let $f: S(C) \to S(C^{\prime})$ be a surjective isometry between elementary JB$^*$-triples with rank$(C)\geq 2$, and let $e$ be a minimal tripotent in $C$. Then $f(S(C_1(e))) = S(C'_1(f(e)))$.
\end{lemma}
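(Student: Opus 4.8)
The plan is to prove the two inclusions separately, and in fact to reduce everything to one of them. Since $f^{-1}\colon S(C')\to S(C)$ is again a surjective isometry between elementary JB$^*$-triples of rank $\geq 2$, with $f(e)$ a minimal tripotent of $C'$ (Proposition \ref{p surjective isometries between the spheres preserve finite rank tripotents}) and $f^{-1}(f(e))=e$, it suffices to establish the inclusion $f(S(C_1(e)))\subseteq S(C'_1(f(e)))$; applying it to $f^{-1}$ and the minimal tripotent $f(e)$ then gives $S(C'_1(f(e)))\subseteq f(S(C_1(e)))$, hence equality. Moreover, by Corollary \ref{c linearity or anti-linearity for all minimal} I may assume $f(iu)=if(u)$ for every finite rank tripotent $u$ (the conjugate-linear case being entirely analogous), so that $f(\lambda w)=\lambda f(w)$ for every minimal tripotent $w$ and every $\lambda\in\mathbb{C}$ by Lemma \ref{l i times minimal trip}.

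The structural heart of the argument is that $C_1(e)$ is a Peirce subspace of small rank. Since $e$ is minimal, $C_1(e)$ is a JB$^*$-subtriple by the Peirce arithmetic \eqref{eq peirce rules2}, and its rank is at most two by \cite[Corollary 2.2]{DanFri87}. Consequently, every $x\in S(C_1(e))$ admits a spectral decomposition $x=\lambda_1 u_1+\lambda_2 u_2$, where $u_1\perp u_2$ are mutually orthogonal minimal tripotents of $C$ lying in $C_1(e)$ and $\lambda_1,\lambda_2\in\mathbb{C}$ satisfy $\max\{|\lambda_1|,|\lambda_2|\}=1$ (the rank one case being $\lambda_2=0$). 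Because $u_1,u_2\in C_1(e)$ are minimal in $C$, Lemma \ref{l collinearity}$(b)$ yields $e\top u_1$ and $e\top u_2$, and Proposition \ref{p collinear are mapped into collinear} then gives $f(u_1),f(u_2)\in C'_1(f(e))$.

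With this decomposition at hand I would compute $f(x)$ directly. After relabelling, assume $|\lambda_1|=1$, so that $\lambda_1 u_1$ is a minimal tripotent and $\lambda_2 u_2\in\mathcal{B}_{C_0(\lambda_1 u_1)}$. Proposition \ref{p surjective isometries between the spheres preserve finite rank tripotents}$(c)$ applied to $\lambda_1 u_1$ furnishes a real linear isometry $T_{\lambda_1 u_1}\colon C_0(\lambda_1 u_1)\to C'_0(f(\lambda_1 u_1))$ satisfying $f(\lambda_1 u_1+y)=f(\lambda_1 u_1)+T_{\lambda_1 u_1}(y)$ on the ball and $T_{\lambda_1 u_1}=f$ on $S(C_0(\lambda_1 u_1))$. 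Evaluating at $y=\lambda_2 u_2$ and using $f(\lambda_1 u_1)=\lambda_1 f(u_1)$ together with $T_{\lambda_1 u_1}(\lambda_2 u_2)=\lambda_2 f(u_2)$ (both from Lemma \ref{l i times minimal trip} and the defining properties of $T_{\lambda_1 u_1}$) produces $f(x)=\lambda_1 f(u_1)+\lambda_2 f(u_2)$. Since $C'_1(f(e))$ is a linear subspace containing $f(u_1)$ and $f(u_2)$, and $\|f(x)\|=\|x\|=1$, I conclude $f(x)\in S(C'_1(f(e)))$.

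The step I expect to require the most care is the spectral reduction in the second paragraph: one must verify not only that the rank of $C_1(e)$ is at most two, but that the two atoms $u_1,u_2$ in the decomposition of $x$ are genuinely minimal \emph{in} $C$ (and not merely minimal in the subtriple $C_1(e)$) and collinear to $e$, since both Proposition \ref{p collinear are mapped into collinear} and Lemma \ref{l i times minimal trip} are formulated for minimal tripotents of $C$. This should follow from the explicit description of the Peirce-one space of a minimal tripotent, but it is precisely the point at which the grid structure of the Cartan factor, rather than a soft argument, has to be used.
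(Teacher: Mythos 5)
Your overall skeleton (reduction to a single inclusion via $f^{-1}$, spectral decomposition of $x\in S(C_1(e))$ inside the rank $\leq 2$ subtriple $C_1(e)$, and additivity through the operator of Proposition \ref{p surjective isometries between the spheres preserve finite rank tripotents}$(c)$) is the same as the paper's. But the step you single out at the end as ``requiring the most care'' is not merely delicate: the claim you hope to verify there is false, and your argument does not go through without it. A minimal tripotent of the subtriple $C_1(e)$ need \emph{not} be minimal in $C$. Take $C$ to be the type $3$ Cartan factor of $3\times 3$ symmetric complex matrices, $e=e_{11}$ and $u=e_{12}+e_{21}$: then $C_1(e)$ is a two-dimensional Hilbert space (rank one) in which $u$ is a minimal tripotent, yet $C_2(u)$ contains $e_{11}$, $e_{22}$ and $u$, so $u$ has rank two in $C$ and \emph{governs} $e$ rather than being collinear with it. Consequently Lemma \ref{l collinearity}$(b)$, Proposition \ref{p collinear are mapped into collinear} and Lemma \ref{l i times minimal trip} --- each of which you invoke for the atoms $u_1,u_2$, and each of which is stated for tripotents that are minimal in $C$ --- are not applicable, and you are left with no reason why $f(u_j)$ should lie in $C'_1(f(e))$. (The same example also shows that in the rank-one case your ``$x$ is a minimal tripotent of $C$'' is wrong: every $x\in S(C_1(e))$ there is a rank-two tripotent of $C$.)

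The paper closes exactly this gap with the Triple System Analyzer \cite[Proposition 2.1]{DanFri87}: a minimal tripotent $u$ of $C_1(e)$ is either minimal in $C$ with $e\top u$ --- in which case one completes the pair to a quadrangle and applies Proposition \ref{p quadrangles and trangles}$(a)$ --- or else $u\vdash e$ and $(e,u,Q(u)(e))$ is a trangle with minimal extremes, in which case Proposition \ref{p quadrangles and trangles}$(d)$ shows that the image triplet is again a trangle, whence $f(u)\in C'_1(f(e))$ in this case too. Once that dichotomy is in place, the additivity computation you propose does work, in the form $x=u_1+\lambda u_2$ with $\lambda\in[-1,1]$ real and $u_1,u_2$ orthogonal minimal tripotents of $C_1(e)$ (taking the spectral coefficients real also removes your appeal to Lemma \ref{l i times minimal trip} for the possibly non-minimal $u_1$), since $T_{u_1}$ is real linear and $C'_1(f(e))$ is a subspace. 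So the proposal is repairable, but as written it has a genuine gap at precisely the point you flagged, and the statement you hoped would rescue it does not hold.
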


\begin{proof}
Let $e$ be a minimal tripotent in $C$, and let us pick a minimal tripotent $u$ in $C_1 (e)$. By the Triple System Analyzer (see \cite[Proposition 2.1 $(iii)$]{DanFri87}) either $u$ is minimal in $C$ and $e\top u$ or $u$ is not minimal in $C$, $u\vdash e$ and the triplet $(e, u, \tilde e =Q(u) (e))$ is a trangle with $e$ and $\tilde e$ minimal in $C$. Since in the first case, we can always find minimal tripotents $e_3$ and $e_4$ in $C$ such that $(e, u, e_3, e_4)$ is a quadrangle (compare the arguments in the proof of Proposition \ref{p quadrangles and trangles}$(c)$), we deduce, applying Proposition \ref{p quadrangles and trangles}, that $f(u)\in C_1'(f(e)).$\smallskip

Since $C_1(e)$ has rank one or two (see \cite[Corollary 2.2]{DanFri87}), given an element $x$ in the unit sphere of $C_1(e)$, one of the following holds\begin{enumerate}[$(a)$]\item $x$ is a minimal tripotent in $C$ (this happens when $C_1(e)$ has rank one);
\item We can find two orthogonal minimal tripotents $u_1,u_2\in C_1(e)$ and $\lambda\in \mathbb{R}$ such that $x = u_1  + \lambda u_2$ and $|\lambda |\leq 1$ (compare \cite[Remark 4.6]{BuChu}).
\end{enumerate} In case $(a)$,  by the arguments the first paragraph, we have $f(x)\in C_1'(f(e))$. In case $(b)$ we observe that, by the Triple System Analyzer, $u_1$ and $u_2$ are finite rank tripotents in $C$, and thus, Proposition \ref{p surjective isometries between the spheres preserve finite rank tripotents} it follows that $$f(x) =  f(u_1  + \lambda u_2) = f( u_1)  + T_{u_1} (\lambda u_2) =  f(u_1)  + \lambda f(u_2) \in  C_1'(f(e)).$$
\end{proof}

\section{Synthesis of a real linear isometry}\label{sec: sintesis}

In a tour the force, T. Dang and Y. Friedman \cite{DanFri87} and E. Neher \cite{Neher87} developed, independently, a complete theory on coordinatization theorems for the Jordan triple systems ``covered'' by a ``grid''. A \emph{grid} in a JB$^*$-triple $E$ is a family of minimal and rank two tripotents in $E$ built up of quadrangles of minimal tripotents or trangles of the form $(v,u, \tilde v)$ with $v$ and $\tilde v$ minimal, where  all the non-vanishing triple products among the elements of the grid are those associated to these types of trangles and quadrangles. A typical grid in the Cartan factor $M_{n,m} (\mathbb{C})$ is given by the family of all matrix units.\smallskip

The results in \cite{DanFri87} and \cite{Neher87} prove, among other classification theorems, that every Cartan factor $C$ admits a (rectangular, symplectic, hermitian, spin, or exceptional) grid $\mathcal{G}$ such that the elementary JB$^*$-triple $K$ associated with $C$ is precisely the norm closed linear span of the grid $\mathcal{G}$, and $C$ being the weak$^*$-closure of $K$ is nothing but the weak$^*$-closure of the linear span of $\mathcal{G}$ (compare \cite[Structure Theorem IV.3.14]{Neher87} or \cite[\S 2]{DanFri87}). A more detailed description of the grids will be given in subsequent results.\smallskip

We can now state the main result of this section.

\begin{theorem}\label{thm Tingley thm ofr weakly compact JB*-triples} Let $f: S(C) \to S(C^{\prime})$ be a surjective isometry between the unit spheres of two elementary JB$^*$-triples with rank greater or equal than two. Then there exists a surjective complex linear or conjugate linear isometry $T: C\to C^{\prime}$ satisfying $T|_{S(C)} = f$.
\end{theorem}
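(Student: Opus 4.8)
The plan is to build the isometry $T$ out of a grid and then to verify that it agrees with $f$ on the whole sphere. First I would invoke Corollary~\ref{c linearity or anti-linearity for all minimal} to reduce to a single global behaviour of $f$ on the imaginary directions: either $f(iu)=if(u)$ for every finite rank tripotent $u$, or $f(iu)=-if(u)$ for every such $u$. I treat the first case and aim at a complex linear $T$; the second case is literally the same argument and yields a conjugate linear $T$. I then fix a grid $\mathcal{G}$ of $C$ (rectangular, symplectic, hermitian, spin or exceptional) whose norm closed linear span is $C$, as provided by the coordinatization theorems of Dang--Friedman and Neher recalled at the beginning of this section.

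Next I would produce $T$. Using the preservation results of Section~\ref{sec: geometric properties}---$f$ preserves orthogonality, minimality and rank (Theorem~\ref{t Tingley antipodes for finite rank new}), collinearity (Proposition~\ref{p collinear are mapped into collinear}), quadrangles (Proposition~\ref{p quadrangles and trangles}$(a)$) and trangles (Proposition~\ref{p quadrangles and trangles}$(d)$)---the family $f(\mathcal{G})$ is again a grid of the same type in $C'$, and $g\mapsto f(g)$ preserves every non-vanishing triple product among grid elements. By the coordinatization theorems this grid isomorphism extends to an isometric complex linear triple isomorphism $T$ from $C=\overline{\mathrm{span}}\,\mathcal{G}$ onto $\overline{\mathrm{span}}\,f(\mathcal{G})\subseteq C'$, with $T|_{\mathcal{G}}=f|_{\mathcal{G}}$.

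The bulk of the work is to show $T|_{S(C)}=f$. By construction $T=f$ on $\mathcal{G}$, and since we are in the case $f(iv_1)=if(v_1)$, Theorem~\ref{p additivity on trangles and quadrangles} forces $T=f$ on the linear span of every quadrangle or trangle built from grid elements, while Proposition~\ref{p additivity on collinear min trip} supplies the analogous matching on spans of collinear grid pairs. To pass from the grid to an arbitrary minimal tripotent $e$ I would use Lemma~\ref{l FR Prop 5 with a grid} to write $e$ explicitly as a quadrangle or trangle combination over a grid element and apply Theorem~\ref{p additivity on trangles and quadrangles} to compute $f(e)$ as the same combination of $f$-images, checking that $f$ and $T$ agree on the auxiliary tripotents involved. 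Granting $f(e)=T(e)$ for every minimal tripotent, a general finite rank element has a spectral resolution $x=\sum_i\lambda_i e_i$ with $\lambda_i>0$, $\max_i\lambda_i=1$ and the $e_i$ mutually orthogonal minimal tripotents; taking $e_1$ with $\lambda_1=1$, Proposition~\ref{p surjective isometries between the spheres preserve finite rank tripotents}$(c)$ gives $f(x)=f(e_1)+T_{e_1}\!\big(\sum_{i\ge 2}\lambda_i e_i\big)=\sum_i\lambda_i f(e_i)=\sum_i\lambda_i T(e_i)=T(x)$, using that $T_{e_1}$ is real linear and restricts to $f$ on $S(C_0(e_1))$. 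Finite rank elements are norm dense in $C$ and both $f$ and $T$ are isometries, so $T|_{S(C)}=f$; surjectivity of $T$ is then inherited from that of $f$.

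The hard part will be the propagation of the identity $f=T$ from the fixed grid to every minimal tripotent. The additivity theorem~\ref{p additivity on trangles and quadrangles} only delivers additivity one quadrangle or trangle at a time, and when applied to a general minimal tripotent $e$ it re-expresses $f(e)$ through auxiliary minimal tripotents that need not lie in $\mathcal{G}$. Closing this gap requires a careful induction---say on rank---exploiting that the grid is connected and that the prescribed behaviour on imaginary directions transfers along collinear and governing chains via Proposition~\ref{p quadrangles and trangles}$(b)$, $(c)$, $(e)$, $(f)$, so that $f$ and $T$ are forced to agree on the auxiliary tripotents as well. Once this coherence is in place, the reduction to finite rank elements and the final density and continuity argument are routine.
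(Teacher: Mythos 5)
Your opening reduction via Corollary \ref{c linearity or anti-linearity for all minimal}, and your closing passage from minimal tripotents to the whole sphere (spectral resolution of finite rank elements plus density and continuity, which is exactly what the paper gets by invoking Proposition 3.9 of \cite{PeTan16}), are both sound. The genuine gap is the one you yourself flag and then leave open: propagating $f=T$ from the grid $\mathcal{G}$ to an arbitrary minimal tripotent. As you have set things up, this step is circular. Lemma \ref{l FR Prop 5 with a grid} writes a general minimal tripotent $v$ as a quadrangle (or trangle) combination over a grid element $e$, but the auxiliary tripotents $v_2,v_3,v_4$ (or $u,\tilde e$) it produces are \emph{not} grid elements, and Theorem \ref{p additivity on trangles and quadrangles} then expresses $f(v)$ through $f(v_2),f(v_3),f(v_4)$, about which your $T$ (defined only from values of $f$ on $\mathcal{G}$) says nothing. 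Your proposed remedy (``induction on rank, connectedness of the grid'') is a sketch, not an argument: the obstruction is not rank but the fact that the minimal tripotents form a continuum while $\mathcal{G}$ is discrete, and neither Proposition \ref{p quadrangles and trangles} nor connectedness of the grid forces agreement on the auxiliary tripotents. Note also that until this is resolved you cannot even conclude $\overline{\mathrm{span}}\,f(\mathcal{G})=C'$, i.e.\ surjectivity of your $T$; and the claim that $f(\mathcal{G})$ is a grid \emph{of the same type} (all products not prescribed by quadrangles and trangles vanish) is itself only plausible, not verified, from the preservation results.

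The paper resolves precisely this difficulty by a structurally different construction, carried out type by type (Theorems \ref{thm Tingley thm for type 1 rank 2, 3 and 4}, \ref{thm Tingley thm for type 2 rank 2, 3 and 4}, \ref{thm Tingley thm for type 3 rank 2, 3 and 4}, \ref{thm Tingley thm for finite dimensional}, and \ref{thm Tingley thm for spin factors}): instead of defining $T$ from the values of $f$ on a grid and invoking coordinatization, it defines $T$ piecewise from the real linear isometries $T_e : C_0(e)\to C'_0(f(e))$ of Proposition \ref{p surjective isometries between the spheres preserve finite rank tripotents}$(c)$, together with operators on Peirce-1 parts obtained from Lemma \ref{l preservation of the Peirce 1 subspace} and induction or previously settled types. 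These building blocks agree with $f$ on the \emph{entire unit spheres} of the corresponding Peirce subspaces, and the decompositions coming from Lemma \ref{l FR Prop 5 with a grid} are arranged so that the auxiliary tripotents land exactly in those subspaces; this is what breaks the circularity you run into. Moreover, spin factors require a genuinely separate treatment (Lemma \ref{l no surjective isometries between the s of spin and M2n} excluding type 1 targets, and the real Hilbert space argument of Theorem \ref{thm Tingley thm for spin factors}), which a uniform grid argument would also have to address. So your outline identifies the right ingredients, but its central step is missing, and closing it essentially requires the paper's Peirce-decomposition construction rather than the coordinatization shortcut.
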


The proof will follow from Theorems \ref{thm Tingley thm for type 1 rank 2, 3 and 4}, \ref{thm Tingley thm for type 2 rank 2, 3 and 4}, \ref{thm Tingley thm for type 3 rank 2, 3 and 4}, \ref{thm Tingley thm for finite dimensional}, and \ref{thm Tingley thm for spin factors} below. These results will be obtained by an individualized approach on each elementary JB$^*$-triple.\smallskip

\begin{remark}\label{remark conjugate}{\rm Let $f: S(C) \to S(C^{\prime})$ be a surjective isometry between the unit spheres of two elementary JB$^*$-triples with rank greater or equal than two. By Corollary \ref{c linearity or anti-linearity for all minimal} we know that $f(i e) = i f(e)$ or $f(i e) = - i f(e)$, for every finite rank tripotent $e\in C$. In the second case, we can always replace $C'$ with the complex JB$^*$-triple $C''$ obtained from $C'$ by keeping the original norm, triple product, and sum of vectors but replacing the product by scalars with the product given by $\lambda \cdot x = \overline{\lambda} x$ ($\lambda\in \mathbb{C}$, $x\in C'$). Then the mapping $\overline{f} : S(C) \to S(C^{\prime\prime}),$ $x\mapsto f(x)$ is a surjective isometry with $\overline{f} (i e) = i \overline{f}(e)$ for every minimal tripotent $e\in C$. If there exists a surjective complex linear isometry $\overline{T} : C \to C''$ extending the mapping $\overline{f}$, then we can easily find a conjugate linear isometry  ${T} : C \to C'$, $T(x) =\overline{T}(x)$, whose restriction to $S(C)$ is precisely $f$.}
\end{remark}

\begin{remark}\label{r complex linearity or anti-linearity}{\rm Suppose $f: S(C) \to S(C^{\prime})$ is a surjective isometry between the unit spheres of two elementary JB$^*$-triples with rank$(C)\geq 2$. Let $e$ be a minimal tripotent in $C$ and let $T_{e} : C_0(e)\to C'_{0} (f(e))$ be the surjective real linear isometry given by Proposition \ref{p surjective isometries between the spheres preserve finite rank tripotents}$(c)$. If rank$(C_0(e))\geq 2$, it follows from \cite[Proposition 2.6]{Da} that $T_e$ either is complex linear if $f(i e) = i f(e)$ or conjugate linear if $f(i e) = - i f(e)$ (compare Corollary \ref{c linearity or anti-linearity for all minimal}). When $C_0(e)$ has rank $1$ (and hence it is a complex Hilbert space regarded as a type 1 Cartan factor), every element in $S(C_0(e))$ is a minimal tripotent in $C_0(e)$. Therefore, it follows from Corollary \ref{c linearity or anti-linearity for all minimal} that $T_e$ either is complex linear if $f(i e) = i f(e)$ or conjugate linear if $f(i e) = - i f(e)$.\smallskip

Actually, if $F$ is a JB$^*$-subtriple of $C$, having in mind that a JB$^*$-triple of a weaklyc ompact JB$^*$-triple is weakly compact, every element in $F$ can be approximated in norm by a finite linear combination of mutually orthogonal minimal tripotents in $F$ (see \cite[Remark 4.6]{BuChu}). Moreover, every minimal tripotent in $F$ is a finite rank tripotent in $C$. Therefore, given a bounded real linear operator $T_1 :  F\to C'$ such that $T_1 (x) = f(x)$ for every $x\in S(F)$, we deduce from Corollary \ref{c linearity or anti-linearity for all minimal} that $T_1$ either is complex linear if $f(i e) = i f(e)$ or conjugate linear if $f(i e) = - i f(e)$, for any minimal tripotent $e\in C$.}
\end{remark}

\subsection{Elementary JB$^*$-triples of type 1} We begin our particular study for an elementary JB$^*$-triple $C$ of type 1 and rank between 2 and 4. We are mainly interested in the case $C=L(H,H^\prime),$ where $H$ and $H^\prime$ are complex Hilbert spaces with $2\leq\min\{$dim$(H'),\hbox{dim} (H)\}\leq 4$ (see Section \ref{sec:2}), however the next result is established under more general hypothesis.

\begin{theorem}\label{thm Tingley thm for type 1 rank 2, 3 and 4} Let $C=K(H,H^\prime),$ where $H$ and $H^\prime$ are complex Hilbert spaces with $2\leq\min\{$dim$(H')$, dim$(H)\}$, and let $C'$ be an elementary JB$^*$-triple. Suppose $f: S(C) \to S(C^{\prime})$ is a surjective isometry. Then there exists a surjective complex linear or conjugate linear isometry $T: C\to C^{\prime}$ satisfying $T|_{S(C)} = f$.
\end{theorem}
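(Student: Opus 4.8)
The plan is to normalize the complex structure of $f$, construct a candidate complex linear isometry on a grid of matrix units, and then prove that this linear map agrees with $f$ on the whole unit sphere. I would begin with the reduction afforded by Corollary \ref{c linearity or anti-linearity for all minimal} and Remark \ref{remark conjugate}: exactly one of the global alternatives $f(ie)=if(e)$ or $f(ie)=-if(e)$ (over all finite rank tripotents) holds, and by replacing $C'$ with its conjugate triple I may and do assume $f(ie)=if(e)$ for every minimal tripotent $e\in C$. It then suffices to produce a surjective \emph{complex} linear isometry $T\colon C\to C'$ with $T|_{S(C)}=f$.

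Next I would fix orthonormal bases of $H$ and $H'$ and the associated matrix units $e_{ij}$, which form a rectangular grid of minimal tripotents with $\overline{\mathrm{span}}\{e_{ij}\}=C$. Put $u_{ij}:=f(e_{ij})$. Theorem \ref{t Tingley antipodes for finite rank new} shows $f$ preserves orthogonality of minimal tripotents, Proposition \ref{p collinear are mapped into collinear} preserves collinearity, and Proposition \ref{p quadrangles and trangles}$(a)$ sends the quadrangle $(e_{ij},e_{kj},e_{kl},e_{il})$ to a quadrangle; hence $\{u_{ij}\}$ obeys all the defining relations of a rectangular grid of the same shape. By the coordinatization theory of Dang--Friedman and Neher, the assignment $e_{ij}\mapsto u_{ij}$ extends to a complex linear isometric triple isomorphism $T$ of $C$ onto $D:=\overline{\mathrm{span}}\{u_{ij}\}\subseteq C'$, with $D$ a type $1$ elementary JB$^*$-triple triple-isometric to $C$.

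The core of the argument is to verify $T|_{S(C)}=f$, which I would do by induction on the rank $r=\min\{\dim H,\dim H'\}$ (the rank-one Peirce-$0$ factors arising below being covered by the results of \cite{PeTan16}). First I would check $f=T$ on minimal tripotents. Those supported on a single row or column of the grid are combinations of mutually collinear matrix units, so Proposition \ref{p additivity on collinear min trip} applies; those supported on a single $2\times 2$ block $\{e_{ij},e_{kj},e_{kl},e_{il}\}$ are handled by Theorem \ref{p additivity on trangles and quadrangles}$(a)$, whose constraint $\alpha\delta=\beta\gamma$ is exactly the rank-one condition. A general minimal tripotent $v=\sum_{ij}c_{ij}e_{ij}$, whose coefficient matrix $(c_{ij})$ has rank one, would then be reached by an induction on the size of its support: one absorbs a further row or column by using Lemma \ref{l FR Prop 5 with a grid} to place $v$ inside an auxiliary quadrangle (or trangle) whose corner tripotents already satisfy $f=T$, and then applies Theorem \ref{p additivity on trangles and quadrangles}. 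Once $f=T$ on all minimal tripotents, an arbitrary $x\in S(C)$ admits a singular value decomposition $x=w_1+y$ with $w_1$ a minimal tripotent and $y\in\mathcal{B}_{C_0(w_1)}$; Proposition \ref{p surjective isometries between the spheres preserve finite rank tripotents}$(c)$ gives $f(x)=f(w_1)+T_{w_1}(y)$ with $T_{w_1}\colon C_0(w_1)\to C'_0(f(w_1))$ complex linear by Remark \ref{r complex linearity or anti-linearity}. Since $C_0(w_1)$ is a type $1$ factor of rank $r-1$ and $f$ restricts to a surjective isometry $S(C_0(w_1))\to S(C'_0(f(w_1)))$, the induction hypothesis forces $T_{w_1}=T|_{C_0(w_1)}$, whence $f(x)=T(w_1)+T(y)=T(x)$.

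Surjectivity is then immediate, since $S(D)=T(S(C))=f(S(C))=S(C')$ forces $D=C'$. I expect the genuine difficulty to be the inductive synthesis for minimal tripotents: the additivity statements of Section \ref{sec: geometric properties} are strictly \emph{local}, each controlling a single collinear pair, quadrangle, or trangle, and the delicate point is to propagate them coherently across the overlapping quadrangles of the whole rectangular grid without circularity, relying on the fact that the complex structure has been fixed once and for all by Corollary \ref{c linearity or anti-linearity for all minimal}.
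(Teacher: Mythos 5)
Your overall strategy (normalize via Corollary \ref{c linearity or anti-linearity for all minimal} and Remark \ref{remark conjugate}, build a candidate linear map from a rectangular grid, check agreement on minimal tripotents, then pass to the whole sphere) has the right shape, but it diverges from the paper at the two points that carry the real weight, and at both points your sketch has a genuine gap. First, you define $T$ by grid coordinatization ($e_{ij}\mapsto f(e_{ij})$ extended via Dang--Friedman/Neher), and you then must prove $f=T$ on \emph{every} minimal tripotent by ``absorbing rows and columns'' inductively. You yourself flag the propagation across overlapping quadrangles as the delicate point, and it is: Lemma \ref{l FR Prop 5 with a grid} places a given minimal tripotent $v$ in a quadrangle whose three auxiliary corners are generally \emph{not} grid elements, so for your $T$ the equality $f=T$ at those corners is not available without the very induction you are trying to run; moreover a minimal tripotent of $K(H,H')$ may have infinite support, so an induction on support size does not terminate and Proposition \ref{p additivity on collinear min trip} (stated for a single collinear pair) does not directly cover a row combination with three or more terms. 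The paper sidesteps all of this: it defines $T$ by patching the functional $\phi_{u_{11}}$ with the three real linear isometries $T_{u_{11}},T_{u_{12}},T_{u_{21}}$ of Proposition \ref{p surjective isometries between the spheres preserve finite rank tripotents}$(c)$ on the pieces $C_0(u_{11})$, $C_1(u_{11})\cap C_0(u_{12})$, $C_1(u_{11})\cap C_0(u_{21})$, and observes that any minimal tripotent $u=\eta\otimes\xi$ lies in the complex span of a \emph{single} quadrangle $(u_{11},w_{12},w_0,w_{21})$ whose other three corners lie in $C_0(u_{21})$, $C_0(u_{12})$, $C_0(u_{11})$ respectively, where $f$ agrees with the corresponding $T_{u_{ij}}$ by construction. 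One application of Theorem \ref{p additivity on trangles and quadrangles}$(a)$ then gives $f(u)=T(u)$ with no induction and no circularity.

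Second, your passage from minimal tripotents to all of $S(C)$ runs an induction on $r=\min\{\dim H,\dim H'\}$, but the hypothesis only requires $r\geq 2$ and allows $r=\infty$; in that case $C_0(w_1)$ again has infinite rank and the induction never reduces, so this step fails to cover the full statement. The paper instead quotes \cite[Proposition 3.9]{PeTan16} directly: once $f$ and $T$ agree on all minimal tripotents, they agree on $S(C)$. (Your argument can be repaired without induction by noting that $T_{w_1}$ and $T$ are continuous real linear maps agreeing on the minimal tripotents of $C_0(w_1)$, whose finite orthogonal combinations are dense there by \cite[Remark 4.6]{BuChu}; but as written the inductive scheme is not valid.) Your grid-coordinatization construction of $T$ is a legitimate alternative and would make the triple-isomorphism character of $T$ transparent, but it forces the harder verification problem that the paper's choice of $T$ is specifically engineered to avoid.
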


\begin{proof} Let us first assume that $f(i e) = i f(e)$, for every minimal tripotent $e\in C$ (compare Corollary \ref{c linearity or anti-linearity for all minimal}). We deduce from Remark \ref{r complex linearity or anti-linearity} that the operator $T_{e}$ given by Proposition \ref{p surjective isometries between the spheres preserve finite rank tripotents}$(c)$ is complex linear.\smallskip

Let $\{\xi_i : i \in I\}$ and $\{\eta_j :  j \in J\}$ be orthonormal basis of $H$ and $H'$, respectively. We set $u_{ij} := \eta_j\otimes \xi_i,$ $(i,j)\in I\times J$. Then, the family $\{u_{ij} : (i,j)\in I\times J\}$ is a \emph{rectangular grid} in $C$ (compare \cite{DanFri87}, \cite{Neher87}), however we will not make an explicit use of the properties of the grid in this case.\smallskip

To simplify the notation, we assume that $1,2\in I, J$. Let us consider the minimal tripotents $u_{11}, u_{12}, u_{21}, u_{22}$, and for each one of them the surjective real linear isometry $T_{u_{ij}}: C_0(u_{ij})\to C'_0 (f(u_{ij})).$\smallskip

We can decompose $C$ in the form $$C = \mathbb{C} u_{11} \oplus \left(C_0 (u_{21})\cap C_1(u_{11})\right) \oplus \left(C_0 (u_{12})\cap C_1(u_{11})\right) \oplus C_0 (u_{11}).$$ Let $P_{10} = P_1 (u_{11}) P_0 (u_{21}) = P_0 (u_{21})  P_1 (u_{11})$, $P_{01}=P_1 (u_{11}) P_0 (u_{12})$. The uniqueness of the above decomposition shows that the mapping $T : C \to C'$ given by $$T(x) = P_2(f(u_{11})) (x) + T_{u_{21}} (P_{10} (x)) + T_{u_{12}} (P_{01} (x)) + T_{u_{11}} (P_0 (u_{11}) (x))$$ is a well defined bounded real linear operator.\smallskip

Let $u=\eta\otimes \xi$ be a minimal tripotent in $C$ with $\|\eta\|=1=\|\xi\|.$ A concrete decomposition similar to that given by Lemma \ref{l FR Prop 5 with a grid} can be materialized as follows: let us write $\eta =\lambda_1 \eta_1 +\lambda_2 \eta_0$ and $\xi =\mu_1 \xi_1 + \mu_2 \xi_0,$ where $\|\eta_0\|=1=\|\xi_0\|,$ $\eta_1\perp \eta_0$, $\xi_0\perp \xi_1$ (in the Hilbertian sense), $\lambda_1, \lambda_2, \mu_1 , \mu_2 \in \mathbb{C}$ with $|\lambda_1|^2+ |\lambda_2|^2=1$, and $|\mu_1|^2 +| \mu_2 |^2=1$. Thus, we have $$ u = \lambda_1 \overline{\mu}_1 u_{11}+
\lambda_1 \overline{\mu}_2 w_{12}+ \lambda_2 \overline{\mu}_1  w_{21}+ \lambda_2  \overline{\mu}_2 w_{0}, $$ where $w_{12} = \eta_1 \otimes \xi_0,$ $w_{21} = \eta_0 \otimes \xi_1$, $w_{0} =\eta_0 \otimes \xi_0$, where $(u_{11}, w_{12}, w_0, w_{21})$ is a quadrangle of minimal tripotents, $w_{12}\in C_1(u_{11})\cap C_0(u_{21})$, $w_{21}\in C_1(u_{11})\cap C_0(u_{12})$, $w_{0}\in C_0(u_{11})$. We are in position to apply Theorem \ref{p additivity on trangles and quadrangles}$(a)$ and the complex linearity of $T(u_{11})$, $T(u_{12}),$ and $T(u_{21})$ to deduce that $$f(u) = \lambda_1 \overline{\mu}_1 f(u_{11})+
\lambda_1 \overline{\mu}_2 f(w_{12})+ \lambda_2 \overline{\mu}_1  f(w_{21})+ \lambda_2  \overline{\mu}_2 f(w_{0}) $$ $$= \lambda_1 \overline{\mu}_1 T(u_{11})+
\lambda_1 \overline{\mu}_2 T_{u_{21}} (w_{12})+ \lambda_2 \overline{\mu}_1  T_{u_{12}} (w_{21})+ \lambda_2  \overline{\mu}_2 T_{u_{11}} (w_{0}) $$ $$=  T(\lambda_1 \overline{\mu}_1 u_{11})+
 T_{u_{21}} (\lambda_1 \overline{\mu}_2 w_{12})+  T_{u_{12}} ( \lambda_2 \overline{\mu}_1 w_{21})+ T_{u_{11}} ( \lambda_2  \overline{\mu}_2 w_{0}) $$ $$=  T(\lambda_1 \overline{\mu}_1 u_{11})+
 T (\lambda_1 \overline{\mu}_2 w_{12})+  T ( \lambda_2 \overline{\mu}_1 w_{21})+ T ( \lambda_2  \overline{\mu}_2 w_{0}) = T(u).$$ We observe that $T$ is actually complex linear.\smallskip

We have therefore shown that $f(u) = T(u)$, for every minimal tripotent $u\in C$. Proposition 3.9 in \cite{PeTan16} concludes that $T|_{S(C)} = f$.\smallskip

Finally, if $f(i e) = - i f(e)$, for every minimal tripotent $e\in C$. Let $\overline{f}$, and $C''$ be the mapping and the elementary JB$^*$-triple defined in Remark \ref{remark conjugate}. The arguments above show that we can find a surjective real linear isometry $\overline{T} : C\to C''$ such that $\overline{T}|_{S(C)} = \overline{f}$. The arguments in the just quoted Remark show the existence of a surjective conjugate linear isometry ${T} : C\to C'$ satisfying $T|_{S(C)} = f$.
\end{proof}

\subsection{Elementary JB$^*$-triples of types 2 and 3} In the next results we deal with elementary JB$^*$-triples of type 2 and 3 with rank greater or equal than 2. For this reason we fix a complex Hilbert space $H$, a conjugation $j : H\to H$, and the complex linear involution on $L(H)$ defined by $x^t = j x^* j$ ($x\in L(H)$).

\begin{theorem}\label{thm Tingley thm for type 2 rank 2, 3 and 4} Let $C=\{x\in K(H): x^t = -x\}$ with rank$(C)\geq 2$ (i.e. dim$(H)\geq 4$), and let $C'$ be an elementary JB$^*$-triple. Suppose $f: S(C) \to S(C^{\prime})$ is a surjective isometry. Then there exists a surjective complex linear or conjugate linear isometry $T: C\to C^{\prime}$ satisfying $T|_{S(C)} = f$.
\end{theorem}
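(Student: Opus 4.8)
The plan is to run the scheme of the proof of Theorem \ref{thm Tingley thm for type 1 rank 2, 3 and 4}, using a type 2 Cartan factor in place of a type 1 one and feeding the already established type 1 theorem into the Peirce-$1$ slice. First I would normalise $f$ with respect to multiplication by $i$: by Corollary \ref{c linearity or anti-linearity for all minimal} either $f(ie)=if(e)$ for every finite rank tripotent $e\in C$, or $f(ie)=-if(e)$ for all of them. The second alternative is reduced to the first by passing to $\overline f\colon S(C)\to S(C'')$ as in Remark \ref{remark conjugate} (and a conjugate linear extension is recovered at the very end), so I assume throughout that $f(ie)=if(e)$. Under this hypothesis Remark \ref{r complex linearity or anti-linearity} guarantees that every local isometry $T_e\colon C_0(e)\to C'_0(f(e))$ furnished by Proposition \ref{p surjective isometries between the spheres preserve finite rank tripotents}$(c)$ is \emph{complex} linear.

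Next I would fix an orthonormal basis $\{e_i\}$ of $H$ with $j e_i=e_i$, so that $x^t$ is the ordinary transpose and the minimal tripotents $u_{kl}=e_k\otimes e_l-e_l\otimes e_k$ $(k\neq l)$ form a symplectic grid of $C$ (see \cite{DanFri87,Neher87}). Fixing one of them, say $u_0=u_{12}$, the Peirce decomposition $C=\mathbb{C}u_0\oplus C_1(u_0)\oplus C_0(u_0)$ has the crucial feature that, since $\mathrm{rank}(C)\geq 2$, the slice $C_1(u_0)$ is a type 1 Cartan factor isometric to $M_{2,\dim(H)-2}(\mathbb{C})$ (of rank two), while $C_0(u_0)$ is again of type 2 (of one lower rank) or a Hilbert space. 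By Lemma \ref{l preservation of the Peirce 1 subspace} the restriction $f|_{S(C_1(u_0))}$ is a surjective isometry onto $S(C'_1(f(u_0)))$, whence Theorem \ref{thm Tingley thm for type 1 rank 2, 3 and 4} produces a complex linear isometry $T_1\colon C_1(u_0)\to C'_1(f(u_0))$ extending it; on $C_0(u_0)$ I use the complex linear local isometry $T_{u_0}$, and on $\mathbb{C}u_0$ the map $u_0\mapsto f(u_0)$. Assembling these through the Peirce projections, $T(x)=\phi_{u_0}(x)f(u_0)+T_1(P_1(u_0)(x))+T_{u_0}(P_0(u_0)(x))$, defines a bounded complex linear operator $T\colon C\to C'$, exactly as in the type 1 argument.

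The heart of the matter is to check that $f(u)=T(u)$ for an \emph{arbitrary} minimal tripotent $u\in C$, and this is the step I expect to be the main obstacle: unlike the rank one operators of type 1, a minimal tripotent here is an antisymmetric operator $\xi\otimes\zeta-\zeta\otimes\xi$ of operator rank two, so one must be careful that it still fits into a single configuration adapted to $u_0$. The key observation is that Lemma \ref{l FR Prop 5 with a grid}, applied to the pair $(u_0,u)$, places $u$ inside a single quadrangle $(u_0,v_2,v_3,v_4)$ with $u=\alpha u_0+\beta v_2+\gamma v_4+\delta v_3$, or inside a single trangle $(u_0,w,\tilde e)$ with $u=\alpha u_0+\beta w+\delta\tilde e$, where in either case the remaining vertices satisfy $v_2,v_4,w\in C_1(u_0)$ and $v_3,\tilde e\in C_0(u_0)$. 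Theorem \ref{p additivity on trangles and quadrangles} (available because $f(iu_0)=if(u_0)$) then expresses $f(u)$ as the corresponding complex linear combination of $f(u_0),f(v_2),f(v_4),f(v_3)$ (resp. $f(u_0),f(w),f(\tilde e)$). Since $f$ agrees with $T_1$ on $S(C_1(u_0))$ and with $T_{u_0}$ on $S(C_0(u_0))$, and $f(u_0)=T(u_0)$, each term equals the value of $T$, and the complex linearity of $T$ yields $f(u)=T(u)$.

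Once $f(u)=T(u)$ holds for every minimal tripotent $u$, Proposition 3.9 in \cite{PeTan16} gives $T|_{S(C)}=f$, so $T$ is the desired surjective complex linear isometry; the alternative case $f(ie)=-if(e)$ is converted into a conjugate linear extension through Remark \ref{remark conjugate}. The delicate points to verify carefully are that $C_1(u_0)$ is genuinely a type 1 factor of rank two (so that Theorem \ref{thm Tingley thm for type 1 rank 2, 3 and 4} applies and $T_1$ is complex linear), and that the vertices produced by Lemma \ref{l FR Prop 5 with a grid} land in the prescribed Peirce slices, so that $f$ and $T$ are already known to coincide on them before the additivity step is invoked.
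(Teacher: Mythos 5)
Your proposal follows essentially the same route as the paper's own proof: the same normalisation via Corollary \ref{c linearity or anti-linearity for all minimal} and Remark \ref{remark conjugate}, the same symplectic grid and Peirce decomposition at $u_{12}$ with the Peirce-$1$ slice identified as a type $1$ factor (the paper realises it as $B(p(H),(1-\tilde p)(H))$ via $a\mapsto (1-\tilde p)ap$, matching your $M_{2,\dim(H)-2}(\mathbb{C})$), the same assembled operator $T$, and the same verification on minimal tripotents through Lemma \ref{l FR Prop 5 with a grid} and Theorem \ref{p additivity on trangles and quadrangles}, concluding with Proposition 3.9 of \cite{PeTan16}. The two ``delicate points'' you flag are exactly the ones the paper settles, and they go through as you expect.
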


\begin{proof} As in the proof of the previous theorem, we first assume that $f(i e) = i f(e)$, for every minimal tripotent $e\in C$ (compare Corollary \ref{c linearity or anti-linearity for all minimal}). The case $f(i e) = - i f(e)$, for every minimal tripotent $e\in C$ follows by similar arguments.\smallskip

Let $\{\xi_i : i \in I\}$ be an orthonormal basis of $H$. Defining $u_{ij} = j(\xi_i)\otimes \xi_j - j(\xi_j)\otimes \xi_i$ ($i,j\in I$), the set $\{u_{ij}: i\neq j \hbox{ in } I\}$ is a \emph{sympletic grid} in the sense of \cite[page 317]{DanFri87}.\smallskip

The element $u_{12}$ is a minimal tripotent in $C$ and $f(u_{12})$ satisfies the same property (see Proposition \ref{p surjective isometries between the spheres preserve finite rank tripotents}). By Proposition \ref{p surjective isometries between the spheres preserve finite rank tripotents}$(c)$, there exists a surjective real linear isometry $T_{u_{12}} : C_0 (u_{12}) \to  C'_0 (f(u_{12}))$ satisfying $$f(u_{12} +x )= f(u_{12}) + T_{u_{12}} (x),$$ for every $x\in \mathcal{B}_{C_0(u_{12})}$, and $T_{u_{12}} (x) = f(x)$ for all $x\in S(C_0(u_{12}))$.\smallskip

By Lemma \ref{l preservation of the Peirce 1 subspace}, $f|_{S(C_1(u_{12}))} : S(C_1(u_{12})) \to S(C'_1(f(u_{12})))$ is a surjective isometry. Let $p = \xi_1 \otimes \xi_1 + \xi_2\otimes \xi_2$, $\tilde p = j(\xi_1) \otimes j(\xi_1) + j(\xi_2)\otimes j(\xi_2)$. Then $p$ and $\tilde p$ are rank-2 projections in $B(H)$. The space $C_1(u_{12})$ is isometrically isomorphic to $B(p(H), (1-\tilde p) (H))$ via the mapping $a\mapsto (1-\tilde p) a p$. Applying Theorem \ref{thm Tingley thm for type 1 rank 2, 3 and 4} we deduce the existence of a surjective real linear isometry $T_1 : C_1(u_{12}) \to C'_1(f(u_{12}))$ satisfying $T_1 (x) = f(x)$ for every $x\in {S(C_1(u_{12}))}$.\smallskip

The uniqueness of the Peirce decomposition $C = \mathbb{C} u_{12}\oplus C_1(u_{12}) \oplus C_1(u_{12})$ and the real linearity of the mappings $T_{u_{12}}$ and $T_1$ guarantee that the mapping $T: C\to C'$, defined by $$T(x) = T(\lambda u_{12} +P_1 (u_{12}) (x)+ P_0 (u_{12}) (x)) $$ $$= \lambda f(u_{12}) + T_1 (P_1 (u_{12}) (x)) + T_{u_{12}} (P_1 (u_{12}) (x))$$ is a well-defined continuous linear operator with $\|T\|\leq 3$.\smallskip

Let $e$ be any minimal tripotent in $C$. By Lemma \ref{l FR Prop 5 with a grid} one of the following statements holds:\begin{enumerate}[$(a)$]\item There exist minimal tripotents $v_2,v_3,v_4$ in $C$, and complex numbers $\alpha$, $\beta$, $\gamma$, $\delta$ such that $(u_{12},v_2,v_3,v_4)$ is a quadrangle, $|\alpha|^2 +| \beta|^2 + |\gamma|^2 + |\delta|^2 =1$, $\alpha \delta  = \beta \gamma$, and $e = \alpha u_{12} + \beta v_2 + \gamma v_4 + \delta v_3$;
\item There exist a minimal tripotent $\tilde u_{12}\in C$, a rank two tripotent $u\in C$, and complex numbers $\alpha, \beta, \delta$ such that $(u_{12}, u,\tilde u_{12})$ is a trangle, $|\alpha|^2 +2 | \beta|^2 + |\delta|^2 =1$, $\alpha \delta  = \beta^2$, and $e = \alpha u_{12}+ \beta u +\delta \tilde u_{12}$.
\end{enumerate} Applying Theorem \ref{p additivity on trangles and quadrangles} and the definition of $T$ we get
\begin{enumerate}[$(a)$]\item $$f(e) = f(\alpha u_{12} + \beta v_2 + \gamma v_4 + \delta v_3) = \alpha f(u_{12}) + \beta f(v_2) + \gamma f(v_4) + \delta f(v_3)$$ $$= \alpha f(u_{12}) + \beta T_1(v_2) + \gamma T_1(v_4) + \delta T_{u_{12}} (v_3)$$ $$=\hbox{(by Remark \ref{r complex linearity or anti-linearity})}= f( \alpha u_{12}) + T_1( \beta v_2) + T_1( \gamma v_4) +  T_{u_{12}} (\delta v_3) $$ $$= T( \alpha u_{12}) + T( \beta v_2) + T( \gamma v_4) + T( \delta v_3) = T(e),$$ or,
\item $$f(e) = f(\alpha u_{12}+ \beta u +\delta \tilde u_{12})= \alpha f(u_{12})+ \beta f(u) +\delta f(\tilde u_{12})$$ $$= \alpha T(u_{12})+ \beta T_1(u) +\delta T_{u_{12}} (\tilde u_{12}) $$ $$=\hbox{(by Remark \ref{r complex linearity or anti-linearity})} = T( \alpha u_{12})+ T_1( \beta u) +T_{u_{12}} (\delta \tilde u_{12}) = T(e),$$
\end{enumerate} respectively.\smallskip

We have therefore shown that $f(e) = T(e)$ for every minimal tripotent $e\in C$. Proposition 3.9 in \cite{PeTan16} proves that $T(x) = f(x)$ for every $x\in S(C)$, and hence $T$ is surjective and isometric. We note that $T$ actually is complex linear.
\end{proof}

We shall deal next with a Cartan factor of type 3.

\begin{theorem}\label{thm Tingley thm for type 3 rank 2, 3 and 4} Let $C=\{x\in K(H): x^t = x\},$ where dim$(H)\geq 2$, and let $C'$ be an elementary JB$^*$-triple. Suppose $f: S(C) \to S(C^{\prime})$ is a surjective isometry. Then there exists a surjective complex linear or conjugate linear isometry $T: C\to C^{\prime}$ satisfying $T|_{S(C)} = f$.
\end{theorem}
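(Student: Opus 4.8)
The plan is to reproduce the assembly scheme used for Theorems \ref{thm Tingley thm for type 1 rank 2, 3 and 4} and \ref{thm Tingley thm for type 2 rank 2, 3 and 4}, constructing $T$ from the local isometries attached to the Peirce subspaces of a fixed minimal tripotent, while accounting for the single structural feature that separates type $3$ from type $2$. First I would apply Corollary \ref{c linearity or anti-linearity for all minimal} to reduce to the case $f(ie)=if(e)$ for every minimal tripotent $e\in C$, the alternative $f(ie)=-if(e)$ being recovered by passing to the conjugate triple as in Remark \ref{remark conjugate}. I would then fix an orthonormal basis $\{\xi_i:i\in I\}$ of $H$, with $1,2\in I$, and form the \emph{hermitian grid} of \cite{DanFri87,Neher87}: the minimal tripotents $u_{ii}=j(\xi_i)\otimes\xi_i$ together with the rank two tripotents $u_{ij}=j(\xi_i)\otimes\xi_j+j(\xi_j)\otimes\xi_i$ ($i\neq j$), for which each triplet $(u_{ii},u_{ij},u_{jj})$ is a trangle. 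Fixing $u_{11}$, the construction is organised around the Peirce decomposition $C=\mathbb{C}u_{11}\oplus C_1(u_{11})\oplus C_0(u_{11})$.

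On the Peirce-zero part, Proposition \ref{p surjective isometries between the spheres preserve finite rank tripotents}$(c)$ supplies a surjective real linear isometry $T_{u_{11}}:C_0(u_{11})\to C'_0(f(u_{11}))$ that agrees with $f$ on $S(C_0(u_{11}))$, and Remark \ref{r complex linearity or anti-linearity} forces it to be complex linear. The decisive point is the Peirce-one part: unlike in type $2$, the space $C_1(u_{11})$ attached to a diagonal minimal tripotent has \emph{rank one}, being the closed linear span of $\{u_{1j}:j\neq 1\}$, triple-isomorphic to a complex Hilbert space (a rank-one type $1$ Cartan factor), with no minimal tripotent of $C$ collinear to $u_{11}$. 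Hence Theorem \ref{thm Tingley thm for type 1 rank 2, 3 and 4}, which presupposes rank at least two, is unavailable. Instead I would invoke Lemma \ref{l preservation of the Peirce 1 subspace} to see that $f$ restricts to a surjective isometry $S(C_1(u_{11}))\to S(C'_1(f(u_{11})))$; Theorem \ref{t Tingley antipodes for finite rank new} then guarantees that $C'_1(f(u_{11}))$ is again of rank one, so the rank-one (Hilbert space) instance of Tingley's problem in \cite[Corollary 3.15]{PeTan16} yields a surjective real linear (hence complex linear, by Remark \ref{r complex linearity or anti-linearity}) isometry $T_1:C_1(u_{11})\to C'_1(f(u_{11}))$ extending it. With these at hand I would set $T(x)=\lambda\, f(u_{11})+T_1(P_1(u_{11})(x))+T_{u_{11}}(P_0(u_{11})(x))$, where $P_2(u_{11})(x)=\lambda u_{11}$; uniqueness of the Peirce decomposition makes $T$ a well-defined complex linear operator.

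It remains to check that $T$ and $f$ coincide on every minimal tripotent $e\in C$, which by \cite[Proposition 3.9]{PeTan16} gives $T|_{S(C)}=f$ together with the surjectivity and the isometric character of $T$. Given such an $e$, Lemma \ref{l FR Prop 5 with a grid} embeds it, relative to $u_{11}$, into either a quadrangle $(u_{11},v_2,v_3,v_4)$ with $e=\alpha u_{11}+\beta v_2+\gamma v_4+\delta v_3$ or a trangle $(u_{11},u,\tilde u_{11})$ with $e=\alpha u_{11}+\beta u+\delta\tilde u_{11}$; for type $3$ the trangle alternative is the one that actually occurs, but both are treated identically. In each case the companion tripotents lie in the expected Peirce spaces and have norm one ($v_2,v_4,u\in S(C_1(u_{11}))$ and $v_3,\tilde u_{11}\in S(C_0(u_{11}))$), Theorem \ref{p additivity on trangles and quadrangles} expands $f(e)$ as the corresponding complex linear combination of $f(u_{11})$ and the values $f(v_2)=T_1(v_2)$, $f(v_3)=T_{u_{11}}(v_3)$, and so on, and the complex linearity of $T_1$ and $T_{u_{11}}$ collapses this combination precisely to $T(e)$. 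Finally the case $f(ie)=-if(e)$ is returned to the previous one through Remark \ref{remark conjugate}, producing a conjugate linear $T$.

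The essential obstacle, and the only genuine deviation from the type $2$ argument, is exactly that the Peirce-one space of a minimal tripotent is here a rank-one triple rather than a type $1$ factor of rank $\geq 2$: this is what compels the use of the Hilbert space solution of Tingley's problem in place of Theorem \ref{thm Tingley thm for type 1 rank 2, 3 and 4}, and it requires verifying that $C'_1(f(u_{11}))$ is rank one as well, so that the rank-one result applies on both sides. A minor point to keep in mind is the degenerate case $\dim H=2$, where $C=S_2(\mathbb{C})$ is in fact a spin factor and $C_1(u_{11}),C_0(u_{11})$ are one-dimensional; the same scheme still goes through, and this case is in any event subsumed by the spin factor theorem.
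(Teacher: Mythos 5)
Your proposal is correct and follows essentially the same route as the paper: reduce to the case $f(ie)=if(e)$, build $T$ from the Peirce decomposition of the diagonal minimal tripotent $u_{11}$ of the hermitian grid, handle the rank-one Peirce-$1$ space via the Hilbert-space case of Tingley's problem rather than Theorem \ref{thm Tingley thm for type 1 rank 2, 3 and 4}, and conclude on minimal tripotents via Lemma \ref{l FR Prop 5 with a grid}, Theorem \ref{p additivity on trangles and quadrangles} and \cite[Proposition 3.9]{PeTan16}. You correctly isolate the one structural deviation from the type $2$ argument (the rank of $C_1(u_{11})$), which is exactly the point the paper's proof turns on.
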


\begin{proof} The proof follows similar guidelines to the proof of Theorem \ref{thm Tingley thm for type 2 rank 2, 3 and 4}. We may assume thanks to Corollary \ref{c linearity or anti-linearity for all minimal}, that $f(i e) = i f(e)$, for every minimal tripotent $e\in C$. The case $f(i e) = - i f(e)$, for every minimal tripotent $e\in C$ is very similar.\smallskip

Let $\{\xi_i : i \in I\}$ be an orthonormal basis of $H$. Defining $u_{ij} = (j(\xi_i)\otimes \xi_j + j(\xi_j)\otimes \xi_i)$ ($i\neq j\in I$), and $u_{ii} = (j(\xi_i)\otimes \xi_i + j(\xi_i)\otimes \xi_i)$ ($i\in I$), the set $\{u_{ij}: i,j \in I\}$ is a \emph{hermitian grid} in the sense of \cite[page 308]{DanFri87}.\smallskip

Let $T_{u_{11}} : C_0 (u_{11}) \to  C'_0 (f(u_{11}))$ be the surjective real linear isometry satisfying $$f(u_{11} +x )= f(u_{11}) + T_{u_{11}} (x),$$ for every $x\in \mathcal{B}_{C_0(u_{11})}$, and $T_{u_{11}} (x) = f(x)$ for all $x\in S(C_0(u_{11}))$, whose existence is guaranteed by Proposition \ref{p surjective isometries between the spheres preserve finite rank tripotents}$(c)$.\smallskip

Lemma \ref{l preservation of the Peirce 1 subspace} implies that $f|_{S(C_1(u_{11}))} : S(C_1(u_{11})) \to S(C'_1(f(u_{11})))$ is a surjective isometry. The elementary JB$^*$-triple $C_1(u_{11})$ has rank one, and hence, it is isometrically isomorphic to a Hilbert space. It follows from \cite[Theorem 2.1]{Ding2002} (see also \cite[Corollary 3.15]{PeTan16}) that there exists a surjective real linear isometry $T_1 : C_1(u_{11}) \to C'_1(f(u_{11}))$ satisfying $T_1 (x) = f(x)$ for every $x\in {S(C_1(u_{11}))}$.\smallskip

Remark \ref{r complex linearity or anti-linearity} guarantees that $T_1$ and $ T_{u_{12}}$ are complex linear because we have assumed that $f(i e) = i f(e)$, for every minimal tripotent $e\in C$.\smallskip

Repeating the arguments in the final part of the proof of Theorem \ref{thm Tingley thm for type 2 rank 2, 3 and 4} we deduce that the mapping $T: C\to C'$, defined by $$T(x) = T(\lambda u_{12} +P_1 (u_{12}) (x)+ P_0 (u_{12}) (x)) $$ $$= \lambda f(u_{12}) + T_1 (P_1 (u_{12}) (x)) + T_{u_{12}} (P_1 (u_{12}) (x)),$$ is a well-defined continuous linear operator, and $T(x) = f(x)$ for every $x\in C$, which concludes the proof.
\end{proof}

\subsection{Finite dimensional elementary JB$^*$-triples} Surjective isometries between finite-dimensional Cartan factors can be treated by a unified approach. We recall that Cartan factors of types 5 and 6 are finite dimensional.

\begin{theorem}\label{thm Tingley thm for finite dimensional} Let $C, C'$ be elementary JB$^*$-triples with dim$(C)<\infty$. Suppose $f: S(C) \to S(C^{\prime})$ is a surjective isometry. Then there exists a surjective real isometry $T: C\to C^{\prime}$ satisfying $T|_{S(C)} = f$. Furthermore, if rank$(C)\geq 2$ then $T$ is complex linear or conjugate linear.
\end{theorem}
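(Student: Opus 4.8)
The plan is to split the argument according to the rank of $C$, treating the rank-one case separately from the higher-rank cases, since the statement itself distinguishes them. For rank$(C)=1$, the Cartan factor $C$ is a finite-dimensional complex Hilbert space regarded as a type~1 factor, so every element of $S(C)$ is a minimal tripotent; here I would simply invoke the classical solution of Tingley's problem for Hilbert spaces (\cite[Theorem 2.1]{Ding2002}, or equivalently \cite[Corollary 3.15]{PeTan16}) to produce a surjective real linear isometry $T$ extending $f$, which in general need be only real linear. The interesting content is the case rank$(C)\geq 2$, where I must additionally obtain that $T$ is complex linear or conjugate linear.

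For rank$(C)\geq 2$ I would reduce to the classification of the finite-dimensional Cartan factors and dispatch each type by the results already established. Concretely, a finite-dimensional elementary JB$^*$-triple of rank at least two is either of type 1 (that is, $K(H,H')=L(H,H')$ with $2\leq \min\{\dim H,\dim H'\}<\infty$), of type 2, of type 3, a spin factor (type 4), or an exceptional factor of type 5 or 6. For types 1, 2, and 3 the conclusion is precisely Theorems \ref{thm Tingley thm for type 1 rank 2, 3 and 4}, \ref{thm Tingley thm for type 2 rank 2, 3 and 4}, and \ref{thm Tingley thm for type 3 rank 2, 3 and 4}, respectively, each of which already yields a surjective complex linear or conjugate linear isometry extending $f$. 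The spin case is Theorem \ref{thm Tingley thm for spin factors}. Thus the only genuinely new work concerns the exceptional factors $C_5$ and $C_6$.

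For the exceptional factors I would argue uniformly using the grid machinery and the additivity results of Section \ref{sec: geometric properties}. By Corollary \ref{c linearity or anti-linearity for all minimal}, after possibly passing to the conjugated target $C''$ as in Remark \ref{remark conjugate}, I may assume $f(ie)=if(e)$ for every minimal tripotent $e\in C$. Fixing a grid $\mathcal{G}$ of $C$ whose linear span is $C$ (such a grid exists by the coordinatization theorems of Dang--Friedman and Neher cited in Section \ref{sec: sintesis}), I would define a candidate real linear map $T$ on the grid by $T(g)=f(g)$ and extend linearly; the well-definedness and boundedness follow because $\mathcal G$ is a basis of the finite-dimensional space $C$. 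Proposition \ref{p surjective isometries between the spheres preserve finite rank tripotents} together with Theorem \ref{t Tingley antipodes for finite rank new} guarantees that $f$ carries minimal tripotents to minimal tripotents and is additive on orthogonal finite-rank tripotents, while Theorem \ref{p additivity on trangles and quadrangles} supplies the crucial additivity of $f$ along the quadrangles and trangles composing $\mathcal G$. Using Lemma \ref{l FR Prop 5 with a grid} to write an arbitrary minimal tripotent $e$ of $C$ in terms of a quadrangle or trangle anchored at a fixed grid element, I would then verify $f(e)=T(e)$ for every minimal tripotent, exactly as in the proofs of Theorems \ref{thm Tingley thm for type 1 rank 2, 3 and 4}--\ref{thm Tingley thm for type 3 rank 2, 3 and 4}. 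Since $C$ is finite-dimensional and reflexive, every norm-one element is a convex combination sitting in a face of $\mathcal B_C$, so Proposition \ref{p surjective isometries between the spheres preserve finite rank tripotents}$(b)$ (affinity on faces) upgrades agreement on minimal tripotents to $T|_{S(C)}=f$, and the complex linearity of $T$ follows from Remark \ref{r complex linearity or anti-linearity}.

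The main obstacle I anticipate is handling the exceptional factors $C_5$ and $C_6$ without an ambient matrix model: unlike types 1, 2, 3, one cannot simply realize $C$ inside some $M_n(\mathbb C)$ and read off Peirce decompositions concretely, so the entire argument must be phrased intrinsically through the grid and the abstract triple identities. In particular, ensuring that every minimal tripotent of $C_5$ (a $16$-dimensional factor of rank $2$) or $C_6$ (a $27$-dimensional factor of rank $3$) is reachable by a single quadrangle or trangle based at a fixed grid element via Lemma \ref{l FR Prop 5 with a grid}, and that the additivity of Theorem \ref{p additivity on trangles and quadrangles} suffices to pin down $f$ on each such element, is where the care is needed; the homogeneity of Cartan factors under the complex linear triple automorphism group (used implicitly via \cite[Proposition 5.8]{Ka97} in the comments preceding Corollary \ref{c linearity or anti-linearity for all minimal orthogonal}) is the tool that makes this transitive reachability argument work.
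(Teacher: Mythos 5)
Your reduction has two genuine problems. First, a circularity: you propose to dispatch finite-dimensional spin factors by citing Theorem \ref{thm Tingley thm for spin factors}, but the paper's proof of that theorem begins by sending the finite-dimensional case back to Theorem \ref{thm Tingley thm for finite dimensional}; its actual argument (the steps involving $S(X_1)$ and the choice of a third vector $x_2$ with $(x_1|x_2)=(x_0|x_2)=0$) is written for infinite-dimensional $X$ and is not available to you here without reworking it. Second, and more seriously, your treatment of the exceptional factors does not close. You define $T$ by prescribing $T(g)=f(g)$ on a grid $\mathcal G$ and extending linearly, and then want to check $f(v)=T(v)$ for an arbitrary minimal tripotent $v$ via Lemma \ref{l FR Prop 5 with a grid} and Theorem \ref{p additivity on trangles and quadrangles}. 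But Lemma \ref{l FR Prop 5 with a grid} expresses $v$ through a quadrangle $(e,v_2,v_3,v_4)$ or trangle $(e,u,\tilde e)$ whose non-anchor members are \emph{not} grid elements, so Theorem \ref{p additivity on trangles and quadrangles} gives $f(v)=\alpha f(e)+\beta f(v_2)+\cdots$, which only equals $T(v)$ if you already know $f(v_j)=T(v_j)$ for those auxiliary minimal tripotents --- exactly the statement you are trying to prove. You flag this as ``where the care is needed'' but do not supply the missing step.

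The paper avoids both issues with a single induction on $\dim(C)$ that treats all types (including $C_5$, $C_6$ and finite-dimensional spin factors) uniformly, so no case analysis by type is needed. Fix a minimal tripotent $e$ and decompose $C=\mathbb{C}e\oplus C_1(e)\oplus C_0(e)$. Lemma \ref{l preservation of the Peirce 1 subspace} shows $f$ restricts to a surjective isometry $S(C_1(e))\to S(C'_1(f(e)))$; since $C_1(e)$ is a lower-dimensional Cartan factor (generated by a standard grid), the induction hypothesis produces a real linear isometry $T_1$ agreeing with $f$ on all of $S(C_1(e))$, while Proposition \ref{p surjective isometries between the spheres preserve finite rank tripotents}$(c)$ gives $T_e$ agreeing with $f$ on $S(C_0(e))$. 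Defining $T(\lambda e+P_1(e)x+P_0(e)x)=\lambda f(e)+T_1(P_1(e)x)+T_e(P_0(e)x)$, the point is that the quadrangle or trangle members produced by Lemma \ref{l FR Prop 5 with a grid} automatically lie in $C_1(e)$ and $C_0(e)$, where $T$ is already known to coincide with $f$; Theorem \ref{p additivity on trangles and quadrangles} then yields $f(v)=T(v)$ for every minimal tripotent $v$, and \cite[Proposition 3.9]{PeTan16} upgrades this to $T|_{S(C)}=f$. If you want to keep your case-by-case framework, you must either replicate this Peirce-inductive step for $C_5$, $C_6$ and the finite-dimensional spin factors, or find an independent argument that your grid-extension $T$ agrees with $f$ on the non-grid minimal tripotents appearing in Lemma \ref{l FR Prop 5 with a grid}.
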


\begin{proof} We shall argue by induction on the dimension of $C$. If rank$(C)=1$ the conclusion follows from \cite[Theorem 2.1]{Ding2002} (see also \cite[Corollary 3.15]{PeTan16}). If the dim$(C) =1$ the statement has been proved.\smallskip

Henceforth, we assume rank$(C)\geq 2$.\smallskip

Let us assume that our statement is true for any Cartan factor $\tilde C$ with dimension $\leq n$, and dim$(C) = n+1$. Let us pick a minimal tripotent $e\in C$. We can assume, via Corollary \ref{c linearity or anti-linearity for all minimal}, that $f(i e) = i f(e)$ (the case $f(i e) = -i f(e)$ follows with similar techniques).\smallskip

By Lemma \ref{l preservation of the Peirce 1 subspace} $f|_{S(C_1(e))}: S(C_1(e))\to  S(C'_1(f(e)))$ is a surjective isometry. Since dim$(C_1(e))\leq n$, and $C_1(e)$ being generated by a standard grid (see \cite[\S IV.3]{Neher87}) is another Cartan factor, we conclude from the induction hypothesis that there exists a surjective real linear isometry $T_1 : C_1(e) \to C_1(e)$ satisfying $T_1 (x) = f(x)$ for all $x\in S(C_1(e))$. By Proposition \ref{p surjective isometries between the spheres preserve finite rank tripotents}$(c)$ we can find a surjective real linear isometry $T_e : C_0(e) \to C_0(e)$ such satisfying $f(x) = T_{e} (x)$ for all $x\in S(C_0(e))$.\smallskip

We define a bounded linear mapping $T: C \to C'$ given by $$T(x) = T(\lambda e + P_1 (e) (x) +P_0(e)(x)) :=  \lambda f(e) + T_1 (P_1 (e) (x)) +  T_e(P_0(e)(x)).$$ The arguments given in the last three paragraphs of the proof of Theorem \ref{thm Tingley thm for type 2 rank 2, 3 and 4} can be now repeated to show that $T(u) = f(u)$ for every minimal tripotent $u\in C$. By \cite[Proposition 3.9]{PeTan16} we have $f(x) = T(x)$ for every $x\in S(C)$, which concludes the induction argument.\smallskip

The final observation follows as a consequence of Remark \ref{r complex linearity or anti-linearity}.
\end{proof}

\subsection{Spin factors} In this section we explore Tingley's problem for surjective isometries from the unit sphere of a spin factor into the unit sphere of an elementary JB$^*$-triple.\smallskip

The starting lemma shows that we cannot find a surjective isometry from the unit sphere of an infinite dimensional spin factor onto the unit sphere of a type 1 Cartan factor.

\begin{lemma}\label{l no surjective isometries between the s of spin and M2n}  Let $X$ be a spin factor, and let $C=L(H,H^\prime)$, where $H$ and $H^\prime$ are complex Hilbert spaces with dim$(H^{\prime})=2$, and dim$(H)\geq 3$. Then there is no surjective isometry $f: S(X) \to S(C)$.
\end{lemma}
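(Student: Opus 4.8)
The plan is to argue by contradiction, exploiting that a surjective isometry between the two spheres must induce, at each minimal tripotent, a real linear isometry between the corresponding Peirce-$0$ subspaces, and then to detect that these subspaces have \emph{different dimensions} for $X$ and for $C$.

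First I would check that both $X$ and $C$ lie within the reach of the finite-rank machinery developed above, i.e. that they are weakly compact JB$^*$-triples. Since $\dim(H')=2<\infty$, every bounded operator from $H$ into $H'$ has finite rank and is therefore compact, so $C=L(H,H')=K(H,H')$ is an elementary JB$^*$-triple of type $1$. On the other hand the spin norm satisfies $(x|x)\le \|x\|^2\le 2(x|x)$, so $X$ is linearly isomorphic to a Hilbert space, hence reflexive and thus weakly compact. Note also that both have rank two (every spin factor has rank two, and $\mathrm{rank}(K(H,H'))=\min\{\dim H,\dim H'\}=2$), so a crude rank count cannot separate them and the argument must rest on a finer invariant.

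Suppose, towards a contradiction, that $f\colon S(X)\to S(C)$ is a surjective isometry, and fix a minimal tripotent $e\in X$. By Theorem \ref{t Tingley antipodes for finite rank new} the image $u:=f(e)$ is a minimal tripotent of $C$, and Proposition \ref{p surjective isometries between the spheres preserve finite rank tripotents}$(c)$ provides a surjective real linear isometry $T_e\colon X_0(e)\to C_0(u)$; in particular $X_0(e)$ and $C_0(u)$ have the same real dimension. Now by \eqref{eq Peirce zero in spin is one dimensional} we have $X_0(e)=\mathbb{C}\overline{e}$, of complex dimension one. Writing $u=\eta\otimes\xi$ for unit vectors $\eta\in H'$, $\xi\in H$, and letting $p\in L(H)$, $q\in L(H')$ be the rank-one projections onto $\mathbb{C}\xi$ and $\mathbb{C}\eta$, a direct Peirce computation gives $C_0(u)=(1-q)\,C\,(1-p)\cong K\big((1-p)(H),(1-q)(H')\big)$. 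Here $(1-q)(H')=\eta^{\perp}$ is one-dimensional (as $\dim H'=2$) while $(1-p)(H)=\xi^{\perp}$ has dimension $\dim H-1\ge 2$, so $\dim_{\mathbb C}C_0(u)=\dim H-1\ge 2>1=\dim_{\mathbb C}X_0(e)$, contradicting the existence of the real linear isometry $T_e$.

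The step that really does the work, and the one I would present most carefully, is the identification of the distinguishing invariant: the real dimension of the Peirce-$0$ space of a minimal tripotent, which Proposition \ref{p surjective isometries between the spheres preserve finite rank tripotents}$(c)$ forces $f$ to preserve. Everything else is bookkeeping, namely confirming weak compactness so that this proposition applies, and the elementary computation of the two Peirce-$0$ spaces. The hypotheses $\dim H'=2$ and $\dim H\ge 3$ enter precisely here, guaranteeing $\dim_{\mathbb C}C_0(u)=\dim H-1\ge 2$ against $\dim_{\mathbb C}X_0(e)=1$; in the excluded borderline case $\dim H=\dim H'=2$ one would instead find $\dim_{\mathbb C}C_0(u)=1$, in agreement with the well-known triple isometry between $M_2(\mathbb{C})$ and a low-dimensional spin factor, which shows the restriction $\dim H\ge 3$ cannot be dropped.
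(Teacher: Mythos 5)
Your argument is correct, and it rests on the same two pillars as the paper's proof: the surjective real linear isometry $T_e$ between Peirce-$0$ subspaces supplied by Proposition \ref{p surjective isometries between the spheres preserve finite rank tripotents}$(c)$, and the fact \eqref{eq Peirce zero in spin is one dimensional} that the Peirce-$0$ space of a minimal tripotent in a spin factor is the one-dimensional space $\mathbb{C}\overline{e}$. Where you diverge is in how the contradiction is extracted. The paper works with the inverse isometry $g\colon S(C)\to S(X)$, chooses explicit tripotents $e_{11}=\eta_1\otimes\xi_1$, $e_{12}=\eta_1\otimes\xi_2$, $e_3=\eta_2\otimes\xi_3$ with $e_{11},e_{12}\perp e_3$, and then tracks the images of $e_{11}$, $e_{12}$, $\tfrac{1}{\sqrt2}(e_{11}\pm e_{12})$ and $\tfrac{1}{\sqrt2}(e_{11}\pm i e_{12})$ inside the one-dimensional space $X_0(g(e_3))$: writing these images as unimodular multiples $\lambda_1,\lambda_2$ of $\overline{g(e_3)}$, the norm-one requirement forces $\lambda_2=(-1)^k i\lambda_1$ and then $\bigl\|\tfrac{1}{\sqrt2}(\lambda_1\pm(-1)^{k+1+m}\lambda_1)\bigr\|\in\{0,\sqrt2\}$, which cannot equal $1$. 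You instead observe directly that $T_e$ is a real linear bijection between $X_0(e)$, of real dimension $2$, and $C_0(f(e))\cong K(\xi^{\perp},\eta^{\perp})$, of real dimension $2(\dim H-1)\ge 4$, which is impossible. Your dimension count is shorter and makes the role of the hypothesis $\dim H\ge 3$ completely transparent (your closing remark about $M_2(\mathbb{C})$ being a four-dimensional spin factor is exactly the right sanity check); the paper's computation is more laborious but only uses injectivity of $T_{e_3}$ together with the specific images of finitely many tripotents, so it never needs to appeal to invariance of dimension under a bijective linear map. Both are complete proofs; yours is the more economical one.
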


\begin{proof} Suppose we can find a surjective isometry $g: S(C) \to S(X)$. By hypothesis, we can find at least three minimal tripotents $e_{11}, e_{12}$ and $e_{3}$ in $C$ satisfying $e_{11} \top e_{12},$ $e_{11}$ and $e_{12}$ generate a 2-dimensional complex Hilbert space, and $e_{11}, e_{12}\perp e_{3}$ (take for example $e_{11} = \eta_1\otimes \xi_1$, $e_{12} = \eta_1\otimes \xi_2$, and $e_{3} = \eta_2\otimes \xi_3$, where $\{\eta_1,\eta_2\}$ and $\{\xi_1,\xi_2, \xi_3\}$ are orthonormal systems in $H^\prime$ and $H$, respectively). Let us consider the surjective real linear isometry $T_{e_3} : C_0(e_3) \to X_0(f(e_3))$ given by Proposition \ref{p surjective isometries between the spheres preserve finite rank tripotents}$(c)$. The elements $e_{11}, e_{12},$ and $\frac{1}{\sqrt{2}} (e_{11}+e_{12})$ are minimal tripotents in $C_0(e_3)$.\smallskip

Since $f(e_3)$ is a minimal tripotent in a spin factor $X$, its orthogonal complement $X_0( f(e_3))=\mathbb{C} \overline{f(e_3)}$ is a one dimensional complex space, where $\overline{\ \cdot \ }$ denotes the conjugation on $X$ (compare \eqref{eq Peirce zero in spin is one dimensional}). The minimal tripotents $f(e_{11}), f(e_{12}),$ and $f(\frac{1}{\sqrt{2}} (e_{11}\pm e_{12}))$ belong to $X_0( f(e_3))$. So, there exist $\lambda_1$ and $\lambda_2$ in the unit sphere of $\mathbb{C}$ such that $f(e_{11}) =\lambda_1 \overline{f(e_3)}$ and $f(e_{12}) =\lambda_2 \overline{f(e_3)}$. The elements $$ f\Big(\frac{1}{\sqrt{2}} (e_{11}\pm e_{12})\Big) = T_{e_3} \Big(\frac{1}{\sqrt{2}} (e_{11}\pm e_{12})) \Big) = \frac{1}{\sqrt{2}} \Big(T_{e_3}(e_{11})\pm T_{e_3}(e_{12})\Big)$$ $$= \frac{1}{\sqrt{2}} \Big(f(e_{11})\pm f(e_{12})\Big)= \frac{1}{\sqrt{2}} (\lambda_1\pm \lambda_2)  \overline{f(e_3)}$$ must be norm-one, which implies $\lambda_2 = (-1)^k i \lambda_1$ for some natural $k$. We observe that, by Remark \ref{r complex linearity or anti-linearity}, $T_{e_3}$ is complex linear or conjugate linear. Arguing as above, we prove the existence of a natural $m$ such that $$ f\Big(\frac{1}{\sqrt{2}} (e_{11}\pm i e_{12})\Big) = T_{e_3} \Big(\frac{1}{\sqrt{2}} (e_{11}\pm i e_{12})) \Big) = \frac{1}{\sqrt{2}} \Big(T_{e_3}(e_{11})\pm (-1)^m i   T_{e_3}(e_{12})\Big) $$ $$= \frac{1}{\sqrt{2}} \Big(f(e_{11})\pm  (-1)^m i f( e_{12})\Big) = \frac{1}{\sqrt{2}} (\lambda_1\pm (-1)^{k+1+m} \lambda_1)  \overline{f(e_3)}$$ $$= \frac{1\pm (-1)^{k+1+m}}{\sqrt{2}} \lambda_1 \overline{f(e_3)}$$ must be norm-one too, which is impossible.
\end{proof}

\begin{theorem}\label{thm Tingley thm for spin factors} Let $X$ be a spin factor, let $C'$ be an elementary JB$^*$-triple, and let $f: S(X) \to S(C')$ be a surjective isometry. Then there exists a surjective real linear isometry $T: X\to C'$ satisfying $T|_{S(X)} = f$. Furthermore, the operator $T$ can be chosen to be complex linear or conjugate linear.
\end{theorem}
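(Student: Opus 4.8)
The plan is to first reduce the target $C'$ to a spin factor and then to synthesize $T$ from a spin grid by means of the additivity results of the previous section. Since $X$ has rank two, Theorem \ref{t Tingley antipodes for finite rank new} forces $C'$ to have rank two as well, and by Corollary \ref{c linearity or anti-linearity for all minimal} together with Remark \ref{remark conjugate} I may assume, after possibly replacing $C'$ by its conjugate triple, that $f(ie)=if(e)$ for every minimal tripotent $e\in X$, aiming at a \emph{complex} linear $T$; the conjugate linear alternative is then recovered through Remark \ref{remark conjugate}. If $X$ is finite dimensional, then $S(X)$ is compact, hence so is $S(C')=f(S(X))$, forcing $C'$ to be finite dimensional, and Theorem \ref{thm Tingley thm for finite dimensional} already yields the conclusion. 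Thus I am left with the case $\dim(X)=\infty$.

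Next I would pin down $C'$. Being the image under a rank preserving bijection, $C'$ is an infinite dimensional rank two elementary JB$^*$-triple, and inspecting the classification of Cartan factors the only such factors are the type $1$ factors $K(H,H^\prime)$ with $\min\{\dim H,\dim H'\}=2$ and the infinite dimensional spin factors. The type $1$ possibility is excluded by Lemma \ref{l no surjective isometries between the s of spin and M2n}; alternatively, for a minimal tripotent $e$ the surjective isometry $T_e\colon X_0(e)\to C'_0(f(e))$ of Proposition \ref{p surjective isometries between the spheres preserve finite rank tripotents}$(c)$ together with the identity $X_0(e)=\mathbb{C}\overline{e}$ shows that $C'_0(f(e))$ is one complex dimensional, which never happens in an infinite dimensional type $1$ factor of rank two. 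Hence $C'$ is a spin factor, whose real form I denote by $Y_1$.

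To build $T$, I would fix an orthonormal basis $\{a_i\}$ of the real Hilbert space $X_1=\{x\in X:\overline{x}=x\}$, pair up the indices, and form the associated spin grid of minimal tripotents $v_k=\tfrac12(a_{2k-1}+i a_{2k})$, $\tilde v_k=\overline{v_k}$; these satisfy $v_k\perp\tilde v_k$, are pairwise collinear otherwise, and, since $v_k+\tilde v_k=a_{2k-1}$ and $v_k-\tilde v_k=i a_{2k}$, have closed linear span equal to $X$. Using Proposition \ref{p collinear are mapped into collinear} and Proposition \ref{p quadrangles and trangles} one checks that $f$ carries this grid to a spin grid of $C'$, so that the prescription $T(v_k)=f(v_k)$, $T(\tilde v_k)=f(\tilde v_k)$ extends to a well defined bounded complex linear operator $T\colon X\to C'$ (morally, $f$ transports the orthonormal system $\{a_i\}$ of $X_1$ to one of $Y_1$, which complexifies to $T$). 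It then remains to verify $T=f$ on every minimal tripotent $e$: by Lemma \ref{l FR Prop 5 with a grid} each such $e$ is written through a quadrangle or a trangle drawn from the grid, and Theorem \ref{p additivity on trangles and quadrangles}, whose hypothesis $f(iv)=if(v)$ holds by the normalization and Corollary \ref{c linearity or anti-linearity for all minimal}, gives $f(e)=T(e)$. Finally, Proposition 3.9 in \cite{PeTan16} upgrades agreement on minimal tripotents to $T|_{S(X)}=f$, and linearity makes $T$ the desired surjective isometry.

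The step I expect to be the main obstacle is precisely the construction and control of $T$ in the infinite dimensional case. Unlike the arguments for types $1$, $2$, $3$ and the finite dimensional case, one cannot peel off a single minimal tripotent and induct, because the Peirce-one space $X_1(e)$ of a minimal tripotent $e$ in a spin factor is itself a rank two spin factor of dimension $\dim(X)-2$, and \emph{not} a simpler rank one piece; the recursion decreases the dimension by two without ever terminating. This is why the argument must proceed through a full spin grid together with a density and continuity step rather than a finite recursion, and why the identification of $C'$ as a spin factor via Lemma \ref{l no surjective isometries between the s of spin and M2n} must be secured before any coordinatization by a grid is attempted.
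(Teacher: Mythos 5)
Your reduction to the case of a surjective isometry between the unit spheres of two infinite dimensional spin factors is correct and matches the paper; your alternative to Lemma \ref{l no surjective isometries between the s of spin and M2n} (comparing the one complex dimensional space $X_0(e)=\mathbb{C}\overline{e}$ with $C'_0(f(e))$ via the isometry $T_e$ of Proposition \ref{p surjective isometries between the spheres preserve finite rank tripotents}$(c)$) is a nice shortcut. The gap is in the synthesis of $T$. First, the prescription $T(v_k)=f(v_k)$, $T(\tilde v_k)=f(\tilde v_k)$ does not obviously extend to a bounded (let alone isometric) complex linear operator: for that you must show that the images assemble \emph{coherently}, that is, that $f$ carries the orthonormal system $\{a_i\}\subset S(X_1)$ to a system of the form $\{\mu_0 b_i\}$ with a \emph{single} unimodular $\mu_0$ and $\{b_i\}$ orthonormal in $Y_1$. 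A priori each complete tripotent $a_i$ only satisfies $f(a_i)=\mu_i y_i$ with $\mu_i$ depending on $i$, and eliminating these rogue phases is a genuine computation: the paper expands $\{f(e),f(e),f(e)\}=f(e)$ in the spin triple product for $e=\frac{x_1+i x_2}{2}$ to force $(y_1|y_2)=0$ and $\mu_2=\pm\mu_1$, and then a three-element argument makes $\mu_0$ global. Only after this does $f|_{S(X_1)}$ become a surjective isometry between the unit spheres of two real Hilbert spaces, to which \cite{Ding2002} applies and whose complexification is $T$. Your ``morally'' is exactly the missing lemma.

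Second, your verification of $f(e)=T(e)$ for an arbitrary minimal tripotent does not go through as stated. Lemma \ref{l FR Prop 5 with a grid}, applied to a grid element $v_1$ and an arbitrary minimal tripotent $e$, produces a quadrangle or trangle inside the subtriple generated by $v_1$ and $e$; its remaining members are in general \emph{not} grid elements, so Theorem \ref{p additivity on trangles and quadrangles} expresses $f(e)$ as a combination of values of $f$ at tripotents where agreement with $T$ has not been established, and the argument becomes circular. (This is precisely why the peeling strategy works for types $1$, $2$, $3$: there $T$ is defined on the whole space through Peirce projections and operators already known to agree with $f$ on the relevant unit spheres.) The paper's way out is different and essential: writing $e=\frac{x_1+i z_1}{2}$ with $x_1,z_1\in S(X_1)$ orthogonal, the segment joining the complete tripotents $u=x_1$ and $w=i z_1$ lies in $S(X)$, hence in a maximal proper norm closed face of $\mathcal{B}_X$, and Proposition \ref{p surjective isometries between the spheres preserve finite rank tripotents}$(b)$ (affineness of $f$ on faces) yields $f(e)=\frac12 f(x_1)+\frac12 f(i z_1)=\frac12 F(x_1)+\frac{i}{2}F(z_1)=T(e)$, after which \cite[Proposition 3.9]{PeTan16} finishes. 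You need a substitute for these two steps before the grid construction can be called a proof.
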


\begin{proof} If $X$ is finite dimensional the conclusion follows from Theorem \ref{thm Tingley thm for finite dimensional}. We can therefore assume that $X$ is infinite dimensional. Theorem \ref{t Tingley antipodes for finite rank new} implies that $X$ and $C'$ are infinite dimensional rank 2 Cartan factors. Therefore $C'$ is either a spin factor or a type 1 Cartan factor of the form $B(H,H')$, where $H$, $H'$ are complex Hilbert spaces, dim$(H')=2$, and dim$(H)=\infty$. Lemma \ref{l no surjective isometries between the s of spin and M2n} shows that $C'=Y$ must be an infinite dimensional spin factor.\smallskip

As in previous results, we can assume that $f(i e) = i f(e)$ for every minimal tripotent $e$ in $X$ (compare Corollary \ref{c linearity or anti-linearity for all minimal} and Remark \ref{r complex linearity or anti-linearity}). Applying that $X$ has rank two and Theorem \ref{t Tingley antipodes for finite rank new} we deduce that $f(i u) = i f(u)$ for every tripotent $u$ in $X$.\smallskip

By a little abuse of notation, the involutions on $X$ and on $Y$ will be both denoted by the symbol $\overline{\ \cdot \ }$, similarly, we shall indistinctly write $(.|.),$ $\|.\|_2$, and $\|.\|$ for the inner products, the Hilbertian norms, and the spin norms on $X$ and $Y$. The symbols $X_1$, $X_2$, $Y_1$ and $Y_2$ will have the usual meanings commented above, that is, $X_1 =\{x\in X : x= \overline{x} \}$, $Y_1 =\{y\in Y : y= \overline{x} \}$, $X_2 = i X_1$, and $Y_2 = i Y_1$.\smallskip

By the arguments in the first paragraph, we can assume that $X$ (and hence $X_1$) is infinite dimensional.\smallskip

In a first step we shall show that given $x_1,x_2$ in $S(X_1)$ with $(x_1|x_2) =0$ we have $$f(x_1) = \mu_1 y_1, f(x_2) = \mu_2 y_2,$$ for suitable $y_1,y_2\in S(Y_1)$, $\mu_1,\mu_2\in \mathbb{C}$, with $|\mu_j|=1$, $\mu_2 \in \{\pm \mu_1\}$, and $(y_1|y_2)=0$. Indeed, it is known that, for each $j=1,2$, $x_j$ is a rank two tripotent which is also unitary in $X$ (see the remarks before Lemma \ref{l no surjective isometries between the s of spin and M2n}). By Theorem \ref{t Tingley antipodes for finite rank new} $f(x_j)\in \hbox{max } \mathcal{U} (Y)$, and hence there exists a (unique) norm one element $y_j$ in $Y_1$ and $\mu_j\in \mathbb{C}$ with $|\mu_j|=1$ such that $f(x_j) = \mu_j y_j$.\smallskip

The elements $e = \frac{x_1+i x_2}{2}$, $\overline{e} = \frac{x_1-i x_2}{2}$ are minimal tripotents in $X$ with $e\perp \overline{e}$ (compare the comments before Lemma \ref{l no surjective isometries between the s of spin and M2n}). Since $f(e)$ and $f(\overline{e})$ are orthogonal minimal tripotents in $Y$, $\mu_1 y_1 = f(x_1) = f(e) + f(\overline{e})$ and, since we have assumed that $f(i u) = i f(u)$ for every tripotent $u$ in $X$, we have $i \mu_2 y_2 = f(i x_2) = f(e_1) - f(\overline{e}_1)$ (see Theorem \ref{t Tingley antipodes for finite rank new}). Therefore $$ f(e) = \frac{ \mu_1 y_1 +i \mu_2 y_2}{2}, \hbox{ and } f(\overline{e}) = \frac{ \mu_1 y_1 -i \mu_2 y_2}{2}.$$ Having in mind that $f(\overline{e})\perp f(e)$ in $Y$, the identity $$f(e) =\{f(e),f(e),f(e)\} = 2 (f(e)|f(e)) f(e) - (f(e)|\overline{f(e)}) \overline{f(e)}$$ proves that $ \frac12 = (f(e)|f(e)) $ and \begin{equation}\label{eq 2 1011} 0 = (f(e)|\overline{f(e)}) = \frac14 (\mu_1^2-\mu_2^2 + i \mu_1 {\mu_2} (y_1 | y_2) +i \mu_2 {\mu_1} (y_2|y_1)).
 \end{equation}%=\frac14 (\mu_1^2-\mu_2^2 + 2 i \mu_1 {\mu_2} (y_1 | y_2)).$$
Replacing $f(e)$ with $f(\overline{e})$ we also get \begin{equation}\label{eq 3 1011} 0 = (f(\overline{e})|\overline{f(\overline{e})}) = \frac14 (\mu_1^2-\mu_2^2 - i \mu_1 {\mu_2} (y_1 | y_2) -i \mu_2 {\mu_1} (y_2|y_1)).
\end{equation} Combining \eqref{eq 2 1011}, and \eqref{eq 3 1011} we get $\mathbb{R}\ni(y_1|y_2)=0$ and $\mu_1^2 = \mu_2^2,$ and hence $\mu_2 =\pm \mu_1$.\smallskip

In the second step we shall prove the existence of a complex number $\mu_0$ with $|\mu_0|=1$ satisfying $$f( x_1 ) \in \mu_0 S(Y_1), \hbox{ for every } x_1\in S(X_1).$$  For this purpose, pick a norm one element $x_0$ in $X_1$. As before, there exist $\mu_0\in \mathbb{C}$ and $y_0\in S(Y_1)$ such that $f(x_0) = \mu_0 y_0$. Let $x_1$ be any element in $S(X_1)$. We can find a third element $x_2\in S(X_1)$ satisfying $(x_1|x_2) =(x_0|x_2)=0$. Applying the first step to the pairs $(x_0,x_2)$ and $(x_1,x_2)$ we get $$f(x_2) =\mu_0 z_2, \hbox{ and } f(x_1 ) = \mu_0 z_1,$$ for suitable $z_2, z_1\in S(Y_1)$. This finishes the proof of the second step.\smallskip

The sets $X_1$ and $\mu_0 Y_1$ are real linear closed subspaces of $X$ and $Y$, respectively. Furthermore, the spin norms on $X_1$ and on $\mu_0 Y_1$ are precisely the Hilbertian norms associated to the inner products $(.|.)$, in other words, $(X_1, \|.\|) = (X_1, \|.\|_2)$ and $(\mu_0 Y_1, \|.\|) = (\mu_0 Y_1, \|.\|_2)$. We have proved in the second step that $$f|_{S(X_1)} : S(X_1) \to S(\mu_0 Y_1)$$ is a surjective isometry between the unit spheres of two Hilbert spaces. We deduce from \cite{Ding2002} (see also Corollary 3.15 in \cite{PeTan16}) the existence of a surjective real linear isometry $F : X_1\to \mu_0 Y_1$ satisfying $F(x_1 ) = f(x_1)$ for every $x_1\in S(X_1)$. We define a (complex) linear operator $T : X\to Y$ given by $T(x_1 + i z_1) := F(x_1) + i F(z_1)$ ($x_1+i z_1\in X = X_1 \oplus i X_1$).\smallskip

Every minimal tripotent in $X$ is of the form $e = \frac{x_1+i z_1}{2}$, where $x_1,z_1\in S(X_1)$ and $(x_1|z_1)=0$. The elements $u = x_1$ and $w= i z_1$ are complete tripotents in $X$. For each $t\in [0,1]$ we have  $$ \| t u + (1-t) w \|_2^2 = t^2 + (1-t)^2, $$ $$ |(t u + (1-t) w \ |\ t \overline{u} + (1-t) \overline{w} )|^2 =(t^2 - (1-t)^2)^2,$$ and hence
$$\| t u + (1-t) w \|^2 = t^2 + (1-t)^2 + \sqrt{(t^2 + (1-t)^2)^2 - (t^2 - (1-t)^2)^2} =1.$$ This shows that $[u,w] =\{ t u + (1-t) w : t\in [0,1]\}$ is a convex subset in $S(X)$, and by Zorn's lemma, it is contained in some maximal proper norm closed face $M$ of $\mathcal{B}_X$. We conclude, by Proposition \ref{p surjective isometries between the spheres preserve finite rank tripotents}$(b)$, that $$ f(e) = f\left(\frac12 u +\frac12 w\right) = \frac12 f(u) + \frac12 f(w)= \frac12 f(x_1) + \frac12 f(i z_1) $$ $$= \frac12 f(x_1) +i \frac12 f( z_1) =\hbox{(by definition of $T$)}= \frac12 T(u) + \frac12 T(w) =T(e).$$ Therefore, $f(e) = T(e)$ for every minimal tripotent $e\in X$. An application of Proposition 3.9 in \cite{PeTan16} proves that $f(x) = T(x)$ for every $x\in S(X)$, and hence $T$ is a surjective linear isometry.\smallskip

We finally observe that if we assume that $f(i e) = -i f(e)$ for every minimal tripotent $e$ in $X$, then the above arguments show the existence of a conjugate linear isometry $T: X\to Y$ satisfying  $f(x) = T(x)$ for every $x\in S(X)$.
\end{proof}

\textbf{Acknowledgements} Authors partially supported by the Spanish Ministry of Economy and Competitiveness and European Regional Development Fund project no. MTM2014-58984-P and Junta de Andaluc\'{\i}a grant FQM375.

\end{document}